\documentclass[11pt]{amsart}
\voffset=-1.75cm \hoffset=-1.75cm \textheight=22.0cm \textwidth=16.2cm
\usepackage[english]{babel}
\usepackage[T1]{fontenc}
\usepackage[utf8]{inputenc}

\makeatletter
\@namedef{subjclassname@2020}{\textup{2020} Mathematics Subject Classification}
\makeatother

\usepackage{graphicx}
\usepackage{amssymb,amscd,amsthm,amsxtra}
\usepackage{latexsym}
\usepackage{epsfig}

\usepackage{fancyhdr}
\usepackage{pgf} %colori come purple

\usepackage{mathtools}
\usepackage{esint}
\usepackage[active]{srcltx}
\usepackage{color}
%\usepackage{cfr-lm}
 %purple
%\usepackage[osf]{libertine}
\usepackage{hyperref}
%\usepackage{stackengine}
%\stackMath
\hypersetup{colorlinks,breaklinks,
            linkcolor=[rgb]{0,0,0},
            citecolor=[rgb]{0,0,0},
            urlcolor=[rgb]{0,0,0}}

% ----------------------------------------------------------------
\vfuzz2pt % Don't report over-full v-boxes if over-edge is small
\hfuzz2pt % Don't report over-full h-boxes if over-edge is small
% THEOREMS -------------------------------------------------------
\numberwithin{equation}{section}
% MATH -----------------------------------------------------------
\newcommand{\be}{\begin{equation}}
\newcommand{\ee}{\end{equation}}

\newcommand{\R}{\mathbb R}
\newcommand{\N}{\mathbb N}

\newcommand{\eps}{\varepsilon}
\newcommand{\loc}{\mathrm{loc}}

\newcommand{\comment}[1]{}

% ----------------------------------------------------------------
\setlength{\topmargin}{16pt} %\setlength{\headheight}{20pt}
\setlength{\headsep}{30pt}
\setlength{\textheight}{22cm}

\newcommand{\abs}[1]{\lvert {#1} \rvert}
\newcommand{\norm}[2]{\Vert {#1} \Vert_{#2}}

\newtheorem{teo}{Theorem}[section]

\newtheorem{lemma}[teo]{Lemma}
\newtheorem{theorem}[teo]{Theorem}
\newtheorem{proposition}[teo]{Proposition}
\theoremstyle{definition}
\newtheorem{definition}[teo]{Definition}

\newtheorem{remark}[teo]{Remark}

\newcommand{\cV}{\mathcal{V}}
\newcommand{\pa}{\partial}
\newcommand{\de}{\mathrm{d}}

%integrali con sbarra
\usepackage{cfr-lm}
\usepackage[osf]{libertine}

\begin{document}

\title[On the nodal set of solutions to some sublinear equations  without homogeneity]{On the nodal set of solutions to some sublinear equations\\ without homogeneity}
%\keywords{Sublinear equations, nodal set, unstable free-boundary problems, unique continuation, inhomogeneous free-boundary problems}
\subjclass[2020]{35B05, 35R35 (35J61, 35B60, 28A78)}
\thanks{\emph{Keywords:} inhomogeneous free-boundary problems, sublinear equations, nodal set, unstable free-boundary problems, unique continuation.\\
\indent \emph{Acknowledgements:} The authors are partially supported by the INDAM-GNAMPA group. G. T. is partially supported by the ERC project no.\ 853404 \emph{Variational approach to the regularity of the free
boundaries - VAREG} held by Bozhidar Velichkov. \\
Part of this work was carried out while N. S. was visiting the University of Pisa, which he wish to thank for the hospitality.\\
We thank the anonymous referees for the careful reading of the manuscript, and for precious suggestions.}

\author[N. Soave]{Nicola Soave}\thanks{}
\address {Nicola Soave \newline \indent
	Dipartimento di Matematica, Universit\`a degli Studi di Torino\newline \indent
	Via Carlo Alberto 10, 10123 Torino, Italy}
\email{nicola.soave@unito.it}

\author[G. Tortone]{Giorgio Tortone}\thanks{}
\address {Giorgio Tortone \newline \indent
	Dipartimento di Matematica, Universit\`a di Pisa \newline \indent
	Largo B. Pontecorvo 5, 56127 Pisa - ITALY}
\email{giorgio.tortone@dm.unipi.it}

%\date{\today}
%\subjclass[2010] {49Q10, 35R11, 47A75, 49R05
%35J70, % degenerate elliptic equations
% Fractional partial differential equations
%35B40, % Asymptotic behavior of solutions
%35B44, % Blow-up
%35B05, %zero
%35B53, % Liouville theorems
%35R35, % free boundary
%%35K67, % Singular parabolic equations
%}
\keywords{}
%\date{\today}%
%\dedicatory{}%
%\commby{}%
% ----------------------------------------------------------------
\begin{abstract}
We investigate the structure of the nodal set of solutions to an unstable Alt-Phillips type problem
\[
-\Delta u = \lambda_+(u^+)^{p-1}-\lambda_-(u^-)^{q-1}
\]
where $1 \le p<q<2$, $\lambda_+ >0$, $\lambda_- \ge 0$. The equation is characterized by the sublinear \emph{inhomogeneous} character of the right hand-side, which makes it difficult to adapt in a standard way classical tools from free-boundary problems, such as monotonicity formulas and blow-up arguments. Our main results are: the local behavior of solutions close to the nodal set; the complete classification of the admissible vanishing orders, and estimates on the Hausdorff dimension of the singular set, for local minimizers; the existence of degenerate (not locally minimal) solutions.
\end{abstract}

\maketitle

\medskip

{\small \noindent \text{Statements and Declarations:} The authors have no relevant financial or non-financial interests to disclose.}

\smallskip

{\small \noindent \text{Data availability:} Data sharing not applicable to this article as no datasets were generated or analysed during the current study.}

\section{Introduction}
The purpose of this paper is to study the structure of
the nodal sets, and the local behavior nearby, of solutions to the following equations with sublinear nonlinearities, posed in a domain $\Omega \subset \R^n$, $n \ge 2$:
\be\label{eq0}
-\Delta u = \lambda_+(u^+)^{p-1}-\lambda_-(u^-)^{q-1}, \quad \text{where $\lambda_{\pm}>0$ and $1 \le p<q < 2$},
\ee
and
\be\label{equation1}
-\Delta u = \lambda_+(u^+)^{p-1},\quad \text{where $\lambda_{+}>0$ and $1 \le p<2$}.
\ee
Both problems are characterized by the asymmetric behavior of the positive and the negative parts, defined as usual as $u^+:= \max\{u,0\}$ and $u^-:= \max\{-u,0\}$. In particular, the inhomogeneity of the right hand side in equation \eqref{eq0} prevents the existence of homogeneous solutions, and destroys any natural scaling in the equation. This challenging feature arises in different applications related to the composite membrane problem (see \cite{Bl, MR2421158, Chanillo99thefree,MR1796024,  MR2283955} ) as well as in the case of solid combustion problems with ignition temperature (see \cite{combu3,combu1,combu4, combu2}). Since tools from free-boundary problems, such as monotonicity formulas and blow-up arguments, are usually obtained by exploiting the existence of such natural scaling, our analysis will need several new ingredients.

%Note also that \eqref{equation1} can be seen as a particular case of \eqref{eq0}, where $\lambda_-=0$.

The motivation for our study comes from some recent results regarding the structure of the nodal sets of solutions to elliptic sublinear equations. It is well known that solutions to the (super)linear problem
\be\label{eq intro}
-\Delta u = f(x,u) \quad \text{in $\Omega$},
\ee
where $f$ is a Caratheodory function satisfying
\begin{equation}\label{superlinear}
|f(x,t)| \le C |t| \qquad \forall (x,t) \in \Omega \times [-M,M], \quad \forall M>0,
\end{equation}
obey the strong unique continuation principle (see e.g. \cite{AKS, GL}); in particular, $u=0$ in an open subset $\omega \subset \Omega$ implies that $u \equiv 0$ in the whole $\Omega$. Moreover, it was proved in \cite{CafFri85} that any solution $u$ behaves like a homogeneous harmonic polynomial close to each point of its nodal set (see also \cite{Ber}). As a consequence, one can shows that the Hausdorff dimension of its singular set $S(u) = \{u=0=|\nabla u|\}$ is at most $n-2$, and the singular set is actually discrete when $n=2$ \cite{CafFri85} (the regular part $R(u) = \{u=0, \ \nabla u \neq 0\}$ is obviously a smooth $(n-1)$-dimensional hypersurface, by the implicit function theorem). The result was further generalized and refined in \cite{Han94}, where it is proved the full stratification of the singular set, even for more general equations in divergence form. Measure estimates on the nodal and singular set were investigated in \cite{DoFe, HHL, Lin91} and in the more recent papers \cite{CNV, logu2} (see also the references therein).

If the linear growth condition \eqref{superlinear} is replaced by a sublinear one, namely
\begin{equation}\label{hp sub}
|f(x,t)| \le C |t|^\alpha \qquad \forall (x,t) \in \Omega \times [-M,M], \quad \forall M>0,
\end{equation}
for some $\alpha \in [0,1)$, then even the unique continuation principle fails in general, as it is not difficult to construct dead-core solutions in dimension $n=1$ (and hence in any dimension): for example, if $p \in (1,2)$, then $u(x) = c_p (t^+)^{\frac{2}{2-p}}$ solves $u'' = |u|^{p-2}u$ on $\R$ for an appropriate choice of the constant $c_p$, and has arbitrarily large nodal (actually singular) set.

However, it was recently observed in \cite{soaveweth} that, if in \eqref{eq intro} one imposes \eqref{hp sub} and the additional sign-assumption
\[
0<f(x,s)s \quad \text{for all $s \in (-\eps,\eps) \setminus
 \{0\}$}
\]
(for some small $\eps$), then the unique continuation principle holds; we refer to \cite[Theorem 1.2]{soaveweth} for the precise statement (the proofs in \cite{soaveweth} make use of Almgren-type monotonicity formulas; we also refer to \cite{Rul} for an alternative approach based on Carleman estimates). As a particular case, solutions to \eqref{eq0} with $p ,q\in [1,2)$ cannot vanish on an open subset of $\Omega$, unless $u \equiv 0$.

The validity of the unique continuation principle can be considered as the first step in the study of the geometric structure of the nodal set, and of the local behavior nearby. In \cite{soavesublinear}, it is provided the full description for solutions to
\be\label{eq.sublinear.ST}
-\Delta u = \lambda_+(u^+)^{p-1} -\lambda_-(u^-)^{p-1} \quad \text{in $\Omega$, where $\lambda_{\pm}>0$, and $p \in [1,2)$}.
\ee
More precisely, the authors characterized the admissible vanishing orders of the solutions, described the local behavior near the nodal set through a blow-up analysis, derived optimal Hausdorff dimension estimates on the singular set, and proved the existence and multiplicity of infinitely many homogeneous solutions with the same degree $2/(2-p)$ (this implies in particular the non-validity of measure estimates similar to those obtained in the linear setting).

The analysis in \cite{soavesublinear} crucially relies on the homogeneity of the problem, and on the fact that both $u^+$ and $u^-$ follow the same behavior (namely $p=q$ and $\lambda_\pm$ are both positive). It is natural to wonder what happens if one of these assumptions is removed, destroying the homogeneity and introducing an asymmetry in the behavior of the positive and negative parts. This consists precisely in considering \eqref{eq0} or \eqref{equation1}. It will emerge that the behavior of solutions to these equations can be rather different with respect to those of \eqref{eq.sublinear.ST}. We also anticipate that equations \eqref{eq0} and \eqref{equation1} are strictly connected, since homogeneous solutions to \eqref{equation1} will appear as blow-up limits of solutions to \eqref{eq0} at some nodal points.

\medskip

Before proceeding with the formal statement of our main results, we conclude this introduction by mentioning other results related to ours.

Concerning \eqref{equation1}, we observe that it can be considered as a generalization of problem
\be\label{unst}
-\Delta u= \lambda_+ \chi_{\{u>0\}} \quad \text{in $\Omega$, where $\lambda_{+}>0$},
\ee
which is known in the literature as \emph{the unstable obstacle problem}, studied in \cite{ASW1, ASW, AW, MW}. This corresponds to the case $p=1$ in \eqref{equation1}. The word \emph{unstable} comes from the fact that solutions to \eqref{unst} may not be locally minimal (i.e. stable), and this creates a number of interesting new features with respect to the classical (stable) obstacle problem, when the $-$ sign in front of the Laplacian is replaced by $+$: for instance, \eqref{unst} admits solutions which are degenerate of second order, or solutions of class $C^{1,\alpha}$ for every $\alpha \in (0,1)$, but not of class $C^{1,1}$, see \cite{AW}. Our results for \eqref{equation1} generalize part of the analysis in \cite{AW, MW} in the full range $p \in [1,2)$, with some remarkable differences both in the statements, and in the proofs. Indeed, solutions to \eqref{unst} are harmonic when negative, while solve the Poisson equation $-\Delta u =\lambda_+$ when positive. This allows to construct more or less explicit barriers, which are crucially employed to prove, for instance, the non-degeneracy of minimal solutions \cite[Section 3]{MW}, and the classification of the homogeneous ones \cite[Remark 3.3]{ASW}. The strategy to treat these issues for general $p \in (1,2)$ will be completely different.

\medskip

In light of the connection between stable and unstable problems, our results can be seen as an \emph{unstable Alt-Phillips type problem with inhomogeneity}. Starting from the seminal papers \cite{AP,P}, different groups of authors addressed, in the stable setting, the minimization problem associated to functionals of type
\[
\int_{\Omega} \left( \frac12 |\nabla u|^2 + \lambda_+ (u^+)^p + \lambda_-(u^-)^p\right) \mathrm{d}x, \quad \text{with} \quad p \in (0,2), \lambda_{\pm} \ge 0.
\]
%arising as interpolation between the Bernoulli and the obstacle problem.
Despite several contributions to the theory, some questions related to the singular set are still open, highlighting how the presence of a semilinear term complicates the analysis of the free boundary substantially: see \cite{FS} for the case $p \in (1,2)$, and \cite{FerYu, LiPe} for $p \in (0,1)$. As far as the unstable case is concerned (which corresponds to considering $-$ instead of $+$ in front of $\lambda_{\pm}$ in the functional), in addition to \cite{soavesublinear} ($p=q \in [1,2)$) we also mention \cite{soavesingular}, which concerns the \emph{unstable singular case} $p =q \in(0,1)$. Moreover, analogue \emph{non-local} problems have been recently studied, see e.g. \cite{Tor}. Both from the stable and the unstable case, this seems to be the first contribution where positive and negative parts have different powers.

\medskip

Finally, regarding generalizations to the unique continuation properties in \cite{Rul, soaveweth}, we mention that the first result in this direction was obtained in \cite{PaWe} for \emph{least energy solutions} of a Neumann boundary value problem; the proof in \cite{PaWe} strongly exploits the minimality. More recently, various results have been obtained for \emph{parabolic sublinear equations} \cite{ArBa, BaMa}, and for a class of \emph{degenerate sublinear equations} \cite{BaGaMa}.

\subsection*{Main results and organization of the paper.} At first we show the existence of non-trivial weak solutions to \eqref{eq0} and \eqref{equation1} as minimizers of an associated functional in a space of functions with fixed traces. Let
\[
J(u, \Omega):= \int_{\Omega} \left( \frac{1}{2} |\nabla u|^2- \frac{\lambda_+}{p} (u^+)^p - \frac{\lambda_-}{q}(u^-)^q\right)\de x
\]
and, for $g \in H^1(\Omega)$, let
\[
\mathcal{A}:= \left\{u \in H^1(\Omega): \ u-g \in H^1_0(\Omega)\right\}.
\]

\begin{theorem}\label{thm: ex}
Let $\Omega$ be a bounded regular domain of $\mathbb{R}^n$, let $\lambda_{+}>0$, $\lambda_- >0 $ (resp. $\lambda_-=0$), and suppose that $1 \le p<q <2$. Then there exists a solution to \eqref{eq0} (resp. \eqref{equation1}) obtained as minimizer of $J(\cdot\,,\Omega)$ in $\mathcal{A}$.
\end{theorem}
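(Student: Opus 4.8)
The plan is to apply the direct method of the calculus of variations to the functional $J(\cdot\,,\Omega)$ over the admissible class $\mathcal{A}$. First I would check that $J$ is well defined and finite on $\mathcal{A}$: since $1\le p<q<2$, the exponents are subcritical (indeed $p,q<2\le 2^*$ for $n\ge 2$), so for $u\in H^1(\Omega)$ the Sobolev embedding $H^1(\Omega)\hookrightarrow L^q(\Omega)$ gives that $\int_\Omega (u^\pm)^p\,\de x$ and $\int_\Omega (u^-)^q\,\de x$ are finite. Next I would show $J$ is bounded below and coercive on $\mathcal{A}$. Writing $u=g+v$ with $v\in H^1_0(\Omega)$, Poincaré's inequality controls $\|u\|_{L^2}$ by $\|\nabla u\|_{L^2}$ up to the fixed datum $g$; combining this with the interpolation/Young inequality $\lambda_+\,t^p \le \eps t^2 + C_\eps$ (valid because $p<2$), one absorbs the lower-order terms into $\tfrac14\|\nabla u\|_{L^2}^2$, obtaining $J(u,\Omega)\ge \tfrac14\int_\Omega|\nabla u|^2\,\de x - C$ for a constant $C=C(n,\Omega,\lambda_\pm,p,q,g)$. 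Hence $\inf_{\mathcal{A}} J =: m > -\infty$, and any minimizing sequence $(u_k)$ is bounded in $H^1(\Omega)$.

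Then I would extract a subsequence with $u_k \rightharpoonup u_\infty$ weakly in $H^1(\Omega)$ and strongly in $L^q(\Omega)$ (Rellich--Kondrachov), and also a.e.\ up to a further subsequence. The constraint $u_k-g\in H^1_0(\Omega)$ passes to the weak limit since $H^1_0(\Omega)$ is weakly closed, so $u_\infty\in\mathcal{A}$. For the gradient term, weak lower semicontinuity of the $L^2$ norm gives $\int_\Omega |\nabla u_\infty|^2 \le \liminf_k \int_\Omega |\nabla u_k|^2$. For the nonlinear terms, strong $L^q$ convergence (hence strong $L^p$ convergence, since $\Omega$ is bounded and $p<q$) together with the continuity of $t\mapsto (t^\pm)^p$ and $(t^-)^q$ yields $\int_\Omega (u_k^+)^p \to \int_\Omega (u_\infty^+)^p$ and likewise for the other term; one can also invoke the a.e.\ convergence with a dominated-convergence argument using an $L^{q/p}$-equiintegrable majorant. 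Combining, $J(u_\infty,\Omega)\le \liminf_k J(u_k,\Omega)=m$, so $u_\infty$ is a minimizer. Non-triviality in the case $\lambda_->0$ is automatic once $g\not\equiv 0$; if one wants a genuinely non-trivial solution for arbitrary data one notes that the competitor analysis forces $u_\infty\not\equiv 0$ whenever $g\not\equiv 0$ on $\partial\Omega$, and in any case the statement only asserts existence of \emph{a} solution, which the minimizer provides.

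Finally I would verify that the minimizer solves the Euler--Lagrange equation \eqref{eq0} (resp.\ \eqref{equation1}) in the weak sense. The map $t\mapsto \tfrac{\lambda_+}{p}(t^+)^p+\tfrac{\lambda_-}{q}(t^-)^q$ is $C^1$ on $\R$ with derivative $\lambda_+(t^+)^{p-1}-\lambda_-(t^-)^{q-1}$, which is continuous precisely because $p,q\ge 1$; its growth is sublinear, so for $u_\infty\in H^1(\Omega)$ and any $\varphi\in H^1_0(\Omega)$ the Gateaux derivative $\tfrac{d}{dt}\big|_{t=0} J(u_\infty+t\varphi,\Omega)$ exists and equals $\int_\Omega \nabla u_\infty\cdot\nabla\varphi - \lambda_+(u_\infty^+)^{p-1}\varphi + \lambda_-(u_\infty^-)^{q-1}\varphi\,\de x$ (differentiation under the integral sign is justified by a uniform sublinear bound and dominated convergence). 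Setting this to zero gives the weak formulation of \eqref{eq0}. I expect the main technical point to be the justification of passing to the limit in, and differentiating, the nonlinear terms when $p=1$: there $(u^+)^{p-1}=\chi_{\{u>0\}}$ is no longer continuous, but the \emph{primitive} $(t^+)^p=t^+$ is still Lipschitz and convex, so the variational argument goes through and the weak equation must be interpreted with a selection of the subdifferential on $\{u=0\}$; alternatively, since the excerpt's equation \eqref{equation1} explicitly allows $p=1$ as the unstable obstacle problem, one records that on the nodal set the right-hand side is understood in the appropriate a.e.\ sense. All other steps are routine applications of the direct method.
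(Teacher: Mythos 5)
Your argument matches the paper's proof in all essentials: both apply the direct method of the calculus of variations, establishing coercivity of $J$ on $\mathcal{A}$ from a Poincar\'e inequality together with the subcritical growth $1\le p<q<2$ (which makes the lower-order terms $O\big(\|\nabla u\|_{L^2}^{p/2}\big)$ and $O\big(\|\nabla u\|_{L^2}^{q/2}\big)$, hence absorbable), then extracting a weakly convergent minimizing sequence and concluding by weak lower semicontinuity of the Dirichlet energy and Rellich compactness for the nonlinear terms. The paper uses the trace form of Poincar\'e while you split $u=g+v$ with $v\in H^1_0(\Omega)$; the two are interchangeable here. One internal inconsistency worth flagging: in your last paragraph you first assert the integrand is $C^1$ ``precisely because $p,q\ge 1$,'' but this fails at $p=1$, as you yourself note a few lines later; the clean resolution in that borderline case is that one-sided variations $t\mapsto J(u\pm t\varphi,\Omega)$ at a minimizer force $\int_{\{u=0\}}|\varphi|\,\de x=0$ for all $\varphi\in H^1_0(\Omega)$, hence $|\{u=0\}|=0$, so the selection $\lambda_+\chi_{\{u>0\}}$ is unambiguous a.e.\ and the weak form of \eqref{eq0} follows --- a point the paper's own one-line ``direct method'' proof also leaves implicit.
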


Once that the existence of weak solutions is established, we analyze their behavior close to the zero level set $\{u=0\}$. Since the following results are of local nature, we suppose without loss of generality that $\Omega=B_1$ where, as usual, we let $B_r(x_0)$ denote the ball of center $x_0$ and radius $r$ in $\R^n$, and, in the frequent case $x_0=0$, we often write $B_r$ instead of $B_r(0)$, for the sake of brevity. By standard regularity theory, any weak solution to \eqref{eq0} or \eqref{equation1} is of class $C^{1,\alpha}(\Omega)$ for every $\alpha \in (0,1)$, and even of class $C^2(\Omega)$ if $p>1$. Therefore, it makes sense to consider the point-wise value of $u$ and of its gradient, and to split the nodal set into the regular part $R(u)=\{u=0, |\nabla u| \neq 0\}$, which is a smooth $(n-1)$-dimensional hypersurface, and the singular set $S(u)=\{u=0=|\nabla u|\}$. We introduce now the notion of vanishing order.

\begin{definition}\label{def: order V}
Let $u \in H^1_{\loc}(\Omega) \cap C^{1,\alpha}(\Omega)$, and let $x_0 \in \{u=0\}$. The \emph{vanishing order of $u$ at $x_0$} is defined as
\[
\cV(u,x_0) = \sup\left\{\beta>0: \ \limsup_{r \to 0^+} \frac1{r^{n-1+2\beta}} \int_{\pa B_r(x_0)} u^2\,\de\sigma<+\infty\right\}.
\]
\end{definition}

The number $\cV(u,x_0) \in \R^+ \cup \{+\infty\}$ is characterized by the property that
\[
\limsup_{r \to 0^+} \frac1{r^{n-1+2\beta}} \int_{\pa B_r(x_0)} u^2\, \de \sigma  = \begin{cases} 0 & \text{if $0 <\beta< \cV(u,x_0)$} \\
+\infty  & \text{if $\beta > \cV(u,x_0)$}.
\end{cases}
\]

In terms of the vanishing order, the strong unique continuation property (SUCP) consists in the fact that non-trivial solutions cannot vanish with infinite order at any nodal point. This implies in particular that non-trivial solutions cannot vanish identically in any open subset of their reference domain, which is the classical weak unique continuation principle. The validity of the SUCP for solutions to both \eqref{eq0} and \eqref{equation1} is essentially known from \cite{Rul, soavesublinear} (apart from some particular cases, discussed in Proposition \ref{prop: UCP} below) and it implies the existence of a finite vanishing order at every nodal point.

In the special case of equation \eqref{eq.sublinear.ST}, it is proved in \cite{soavesublinear} a much stronger fact: denoting by
\be\label{gamma_p}
\gamma_p := \frac{2}{2-p},
\ee
the critical exponent associated to $p$, and by $\beta_p \in \N$ the maximal positive integer strictly smaller than $\gamma_p$, that is
\be\label{betap}
\beta_p :=
\begin{cases}
  \lfloor\gamma_p\rfloor, & \mbox{if } \gamma_p \not\in \N \vspace{0.1cm}\\
  \gamma_p-1 & \mbox{if } \gamma_p \in \N
\end{cases}
\ee
(where $\lfloor \cdot \rfloor$ is the integer part), then
\begin{equation}\label{class V ST}
\text{$u$ solves \eqref{eq.sublinear.ST} and $x_0 \in \{u=0\}$} \quad \implies \quad \cV(u,x_0) \in \{k \in \mathbb{N}: \ k \le \beta_p\} \cup \{\gamma_p\},
\ee
see \cite[Theorem 1.3]{soavesublinear}. That is, for solutions to \eqref{eq.sublinear.ST} there are only finitely many admissible vanishing orders, with a universal bound depeding only on $p$. This is in contrast to what happens for solutions to linear or superlinear elliptic problems. For instance, the Laplace equation has solutions with any (arbitrarily large) integer vanishing order.

By carefully looking at the proof of \eqref{class V ST}, it emerges that both the fact that $\lambda_->0$ and the symmetry condition $q=p$ play important roles. Therefore, we wish to understand whether a similar property holds for solutions to \eqref{eq0} and \eqref{equation1} or not. In this generality, this remains an open problem.
% and we only have a partial result: the \emph{strong unique continuation property holds}, namely $\mathcal{V}(u,x_0) <+\infty$ (see Proposition \ref{prop: UCP} below).
However, we can provide a full classification for a specific class of solutions, namely \emph{local minimizers}.

\begin{definition}
We say that $u \in H^1_{\loc}(\Omega)$ is a \emph{local minimizer} of \eqref{eq0} or \eqref{equation1} if there exists $\eps>0$ such that
\[
J(u, \Omega) \le J(u+v,\Omega) \qquad \forall v \in H_0^1(\Omega) \ \text{with} \ \|v\|_{H^1(\Omega)} \le \eps.
\]
\end{definition}

Plainly, the solutions found in Theorem \ref{thm: ex} are local minimizers.

\begin{theorem}\label{thm: van ord}
Let $u \in H^1_{\loc}(\Omega)$ be a non-trivial local minimizer of equation \eqref{eq0} or \eqref{equation1}, and let $x_0 \in \{u=0\}$. Then
\[
\cV(u,x_0) \in \{k \in \mathbb{N}: \ k \le \beta_p\} \cup \{\gamma_p\}.
\]
\end{theorem}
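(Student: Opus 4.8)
The plan is to combine three ingredients: an almost-monotonicity formula of Almgren type adapted to the inhomogeneous setting, a blow-up analysis at the nodal point $x_0$, and the minimality of $u$ to rule out the ``bad'' limit profiles. First I would set up the frequency function. Since the right-hand side of \eqref{eq0} grows like $|u|^{p-1}$ with $p<2$, the standard Almgren frequency
\[
N(r) = \frac{r \int_{B_r(x_0)} |\nabla u|^2\,\de x}{\int_{\pa B_r(x_0)} u^2\,\de\sigma}
\]
will not be exactly monotone, but — testing the equation with $u$ and with $x\cdot\nabla u$, and using that $|u|^p \lesssim |u|^2 + |u|^{\gamma_p \cdot 2/\gamma_p}$ together with the sublinear scaling — one obtains that a suitably corrected frequency $e^{Cr^\delta} N(r)$ is monotone for small $r$, hence $N(0^+)=:N$ exists and is finite (finiteness being exactly the SUCP recalled before Theorem~\ref{thm: van ord}). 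A first elementary observation is that $\cV(u,x_0)$ coincides with $N$ whenever the blow-up is nontrivial, so the task reduces to showing $N \in \{k\in\N: k\le\beta_p\}\cup\{\gamma_p\}$.

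Next I would perform the blow-up. Set $u_r(x) = u(x_0 + rx)/\rho(r)$ with the natural normalization $\rho(r)^2 = r^{1-n}\int_{\pa B_r(x_0)} u^2\,\de\sigma$, so that $\|u_r\|_{L^2(\pa B_1)}=1$. Using the monotonicity of the corrected frequency and Caccioppoli-type bounds, $u_r$ is bounded in $H^1_{\loc}$ and, up to a subsequence, converges (strongly in $H^1_{\loc}$, by the usual argument using the equation) to some $u_0\not\equiv 0$. The crucial dichotomy concerns the scaling of the nonlinear term: the rescaled equation reads $-\Delta u_r = \rho(r)^{p-2} r^2 \lambda_+ (u_r^+)^{p-1} - \rho(r)^{q-2} r^2 \lambda_-(u_r^-)^{q-1}$, and since $\rho(r) \sim r^N$ (by definition of $N$), the coefficient $\rho(r)^{p-2}r^2 \sim r^{2 - N(2-p)} = r^{(2-p)(\gamma_p - N)}$. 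Hence: if $N < \gamma_p$ the nonlinear coefficient vanishes and $u_0$ is a (nonzero) homogeneous \emph{harmonic} function of degree $N$, forcing $N\in\N$; if $N=\gamma_p$ we land exactly on the critical case; and if $N>\gamma_p$ the coefficient blows up, which — and here is where minimality enters — must be excluded. The bound $N \le \gamma_p$ for local minimizers I would obtain from the scaling behaviour of the energy: for a minimizer, the Weiss-type quantity
\[
W(r) = \frac{1}{r^{n-2+2\gamma_p}}\int_{B_r(x_0)}\Big(\tfrac12|\nabla u|^2 - \tfrac{\lambda_+}{p}(u^+)^p - \tfrac{\lambda_-}{q}(u^-)^q\Big)\,\de x + \frac{\gamma_p}{2}\,\frac{1}{r^{n-1+2\gamma_p}}\int_{\pa B_r(x_0)} u^2\,\de\sigma
\]
is, modulo an $r^\delta$ error coming from the inhomogeneity, almost-monotone and bounded (comparing $u$ with its $\gamma_p$-homogeneous replacement as a competitor, which is admissible because $u$ is a \emph{local} minimizer and the competitor is a small perturbation for small $r$); boundedness of $W(0^+)$ translates into $\int_{\pa B_r} u^2 \lesssim r^{n-1+2\gamma_p}$, i.e. $\cV(u,x_0) = N \ge \gamma_p$ would contradict nothing yet — rather, it is the other inequality $N\le\gamma_p$ that follows, because if $N>\gamma_p$ then $W(r)\to -\infty$ (the energy term dominates with the wrong sign), contradicting the lower bound on $W$ coming from minimality. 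Finally, the upper bound $N\le\beta_p$ in the harmonic case ($N<\gamma_p$, $N\in\N$) is automatic since $\beta_p$ is by definition the largest integer $<\gamma_p$.

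The main obstacle I anticipate is making the two monotonicity formulas (the frequency $N$ and the Weiss functional $W$) genuinely work despite the absence of scaling: one must carefully track the error terms produced by the inhomogeneous nonlinearity and show they are integrable in $r$ near $0$ (of order $r^{\delta-1}$ with $\delta>0$), which requires a priori control $\int_{\pa B_r} u^2 \gtrsim r^{n-1+2\gamma_p - \epsilon}$ from one side and the SUCP from the other — a slightly delicate bootstrap. A secondary difficulty is the borderline case $N=\gamma_p$: there one needs to know the blow-up $u_0$ is a nonzero $\gamma_p$-homogeneous solution of \eqref{equation1} (this is why the introduction stresses that homogeneous solutions to \eqref{equation1} appear as blow-ups of \eqref{eq0}), and that such profiles exist and are compatible with minimality; ruling out that the normalization degenerates ($u_0\equiv 0$) in this critical regime is exactly the point where the lower frequency bound and strong $H^1_{\loc}$ convergence are indispensable. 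Once these are in place, the trichotomy above gives the claimed list of vanishing orders.
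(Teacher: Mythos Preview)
Your broad outline --- monotonicity formula, blow-up, then use minimality to rule out high vanishing orders --- is the right architecture, and it matches the paper's strategy at that level. But the precise way you invoke minimality to exclude $N>\gamma_p$ is wrong, and this is the heart of the theorem.

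You claim that if $N>\gamma_p$ then $W(r)\to-\infty$, ``the energy term dominates with the wrong sign''. This is false. If $\mathcal{V}(u,x_0)>\gamma_p$, then $u$ vanishes faster than $r^{\gamma_p}$, so every term in the Weiss functional scaled at height $\gamma_p$ --- the gradient term, the potential terms, and the boundary term --- tends to $0$ as $r\to 0^+$. (Concretely: $r^{-(n-1+2\gamma_p)}H(u,x_0,r)\le Cr^{2(\beta-\gamma_p)}\to 0$ for any $\gamma_p<\beta<\mathcal V(u,x_0)$, and the volume terms are controlled similarly.) Hence $W(0^+)=0$, not $-\infty$; there is nothing to contradict. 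The sign on the boundary term in your Weiss functional is also reversed relative to the standard one, but this does not rescue the argument.

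What the paper actually does is orthogonal to your Weiss-lower-bound idea: it proves that local minimizers are \emph{$\gamma_p$-non-degenerate}, namely
\[
\liminf_{r\to 0^+}\frac{1}{r^{\gamma_p}}\sup_{B_r(x_0)}|u|>0,
\]
by a direct competitor construction (Proposition~\ref{thm: non-deg}). One fixes a nonnegative $w\in H^1_0(B_1)$ with $J(w,B_1)<0$ (which exists because $p<2$) and sets $w_k(x)=r_k^{\gamma_p}w((x-x_0)/r_k)$; if $u$ were $\gamma_p$-degenerate along $r_k$, then $J(u+w_k)<J(u)$ in $B_{r_k}(x_0)$ for large $k$, contradicting local minimality. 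This is where minimality genuinely enters, and it has nothing to do with bounding $W$ from below. Once non-degeneracy is in hand, the blow-up alternative (Theorem~\ref{thm.mainblow}) forces $\mathcal V(u,x_0)=\gamma_p$: in case~(i) the natural blow-up $r^{-\gamma_p}u(x_0+r\,\cdot)$ converges to a \emph{nonzero} $\gamma_p$-homogeneous solution of \eqref{equation1} (nonzero precisely by non-degeneracy), so $H(u,x_0,r)/r^{n-1+2\gamma_p}$ has a finite positive limit; in case~(ii) one has $\mathcal V(u,x_0)=\gamma_p$ directly.

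Two smaller remarks. First, the ``gap'' $\mathcal V\in\{1,\dots,\beta_p\}\cup[\gamma_p,\infty)$ is obtained in the paper not via Almgren frequency but by an iterative Caffarelli--Friedman expansion (Proposition~\ref{prop.underdog}); your frequency route could likely be made to work for this part, but you should be aware it is a different mechanism. Second, your ``$\gamma_p$-homogeneous replacement'' competitor would give an \emph{upper} bound on the Weiss energy (an epiperimetric-type inequality), not the lower bound you need; and in any case, as explained above, no lower bound on $W$ can exclude $N>\gamma_p$, because $W\to 0$ there.
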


It is remarkable that, despite the presence of two different exponents $p$ and $q$ in equation \eqref{eq0}, and hence also of two different critical exponents $\gamma_p$ and $\gamma_q$ (with $\gamma_q \to +\infty$ as $q \to 2^-$), we only see the smaller one $\gamma_p$ in the classification of the admissible vanishing orders. Our proof exploits the local minimality in a crucial way, and proceeds as follows: with a preliminary study, valid for any solution to \eqref{eq0} or \eqref{equation1} (not necessarily locally minimal), we show that
\[
\mathcal{V}(u,x_0) \in \{k \in \mathbb{N}: \ k \le \beta_p\} \cup [\gamma_p, +\infty), \qquad \forall x_0 \in \{u=0\}.
\]
Moreover, if $\mathcal{V}(u,x_0) \in \{1,\dots,\beta_p\}$, then we have an expansion of type
\[
u(x) =  P_{x_0}(x-x_0) + \Gamma_{x_0}(x),
\]
where $P_{x_0}$ is a non-trivial homogeneous harmonic polynomial of degree $\mathcal{V}(u,x_0)$, and $\Gamma_{x_0}$ is a higher-order remainder which can be conveniently controlled (see Proposition \ref{prop.underdog}). This intermediate result implies that we can focus on those $x_0 \in \{u=0\}$ with vanishing order $\mathcal{V}(u,x_0) \ge \gamma_p$ and, to this purpose, we introduce the following definition.

\begin{definition}\label{def.nondeg}
Let  $u\in H^1_\loc(B_1)$ be a non-trivial solution of \eqref{eq0} or \eqref{equation1}, and let $x_0 \in \{u=0\}$ such that $\mathcal{V}(u,x_0)\geq \gamma_p$. We say that $u$ is $\gamma_p$-non-degenerate at $x_0$ if
$$
\liminf_{r \to 0^+} \frac{1}{r^{\gamma_p}}\sup_{B_r(x_0)}|u| >0,
$$
and it is $\gamma_p$-degenerate if the $\liminf$ is $0$. Moreover, we say that $u$ is a $\gamma_p$-non-degenerate solution if it is $\gamma_p$-non-degenerate at every nodal point such that $\mathcal{V}(u,x_0)\geq \gamma_p$.
\end{definition}

As a second step in the proof of Theorem \ref{thm: van ord}, we show in Proposition \ref{thm: non-deg} below that any local minimizer is $\gamma_p$-non-degenerate.

Finally, Theorem \ref{thm: van ord} will be obtained by combining the $\gamma_p$-non-degeneracy with the following blow-up alternative (which is valid for every non-trivial solution, not necessarily locally minimal), based on the study of the monotonicity and oscillation properties of the \emph{Weiss-type functional}
\begin{align*}
W_{\gamma_p,2}(u,x_0,r) =&  \frac{1}{r^{n-2+2\gamma_p}}\int_{B_r(x_0)} \left( \abs{\nabla u}^2- 2 \left[\frac{\lambda_+}{p} (u^+)^p + \frac{\lambda_-}{q}(u^-)^q\right]\right)\mathrm{d}x\\
& \qquad - \frac{\gamma_p}{r^{n-1+2\gamma_p}} \int_{\partial B_r(x_0)}{u^2 \, \mathrm{d}\sigma}.
\end{align*}

%(see \eqref{equation.weiss} below for a more general definition).}

\begin{theorem}\label{thm.mainblow}
  Let $u\in H^1_\loc(B_1)$ be a non-trivial solution of either \eqref{eq0}, or \eqref{equation1}, with $x_0 \in \{u=0\}$. If the vanishing order $\mathcal{V}(u,x_0) \ge \gamma_p=2/(2-p)$, then the following alternative holds:
 \begin{enumerate}
  \item Either
  \be\label{hp blow-up reg}
  \limsup_{r \to 0^+}\frac{1}{r^{n-1+2\gamma_p}}\int_{\partial B_r(x_0)}u^2\,\mathrm{d}\sigma < +\infty;
  \ee
  then $W_{\gamma_p,2}(u,x_0,0^+)>-\infty$ and, for every sequence $r \to 0^+$, there exists a subsequence $r_k \searrow 0^+$ such that
  $$
  \frac{u(x_0+r_k x)}{r_k^{\gamma_p}}
  \to \overline{u} \quad\mbox{in } C^{1,\alpha}_\loc(\R^n),
  $$
  for every $\alpha \in (0,1)$, where $\overline{u}$ is a $\gamma_p$-homogeneous solution to \eqref{equation1}, that is
  \[
  -\Delta \overline{u} = \lambda_+(\overline{u}^+)^{p-1} \quad\mbox{in }\R^n.
  \]
  Moreover
  \[
  W_{\gamma_p,2}(u,x_0,0^+) \geq 0 \quad \iff \quad \bar u \equiv 0 \quad \iff \quad \text{$u$ is $\gamma_p$-degenerate at $x_0$}.
  \]
  % $W_{\gamma_p,2}(u,x_0,0^+) \geq 0$ if and only if $\overline{u}\equiv 0$, which is equivalent to the $\gamma_p$-degeneracy of $u$ at $x_0$.
  \item Or  \be\label{case2}
\limsup_{r \to 0^+}\frac{1}{r^{n-1+2\gamma_p}}\int_{\partial B_r(x_0)}u^2\,\mathrm{d}\sigma = +\infty;
    \ee
then $W_{\gamma_p,2}(u,x_0,0^+)< 0$ (possibly $-\infty$) and $\mathcal{V}(u,x_0) = \gamma_p$. Moreover, there exists a subsequence $r_k \searrow 0^+$ such that
$$
\frac{u(x_0+r_k x)}{\left(\frac{1}{r_k^{n-1}}\int_{\partial B_{r_k}(x_0)}u^2\,\mathrm{d}\sigma\right)^{1/2}}
\to \widetilde u\quad\mbox{in } C^{1,\alpha}_\loc(\R^n),
$$
for every $\alpha \in (0,1)$, where $\widetilde u$ is a $\gamma_p$-homogeneous non-trivial harmonic polynomial. This alternative is possible only if $\gamma_p \in \N$.
\end{enumerate}
\end{theorem}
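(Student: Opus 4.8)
The plan is to analyze the Weiss-type functional $W_{\gamma_p,2}(u,x_0,r)$ by first establishing an "almost monotonicity" formula: differentiating in $r$ should produce, up to inhomogeneous error terms which are integrable near $r=0$ because $\gamma_p p < 2\gamma_p$ and $\gamma_q q$ compares favorably, a nonnegative term plus a boundary term of the form $\frac{c}{r}\int_{\partial B_r}(\partial_\nu u - \frac{\gamma_p}{r}u)^2$. The key computation is the Rellich–Pohozaev identity for the equation, which I would carry out assuming $x_0=0$ after translation. The upshot is that $r \mapsto W_{\gamma_p,2}(u,0,r) + C r^{\delta}$ is monotone non-decreasing for some $\delta>0$, hence $W_{\gamma_p,2}(u,0,0^+)$ exists in $[-\infty,+\infty)$; a rough bound using $\mathcal V(u,x_0)\ge \gamma_p$ together with the Caccioppoli/energy estimates shows it is finite from above. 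The sign of $W(u,0,0^+)$ and whether $\limsup r^{-(n-1+2\gamma_p)}\int_{\partial B_r}u^2$ is finite will be the dichotomy driving the two cases, so I would isolate early the elementary fact that if this $\limsup$ is $+\infty$ then along a subsequence the rescaled $L^2$ norms blow up, forcing (via Almgren-type frequency considerations or a direct ODE argument on $H(r)=r^{1-n}\int_{\partial B_r}u^2$) that $\mathcal V(u,0)=\gamma_p$ exactly and $W<0$.

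In Case (1), I would set $u_r(x)=u(x_0+rx)/r^{\gamma_p}$; the hypothesis \eqref{hp blow-up reg} plus interior elliptic estimates (the solutions are $C^{1,\alpha}$, and the rescaled equations $-\Delta u_r = \lambda_+ r^{2-\gamma_p p}(u_r^+)^{p-1} - \lambda_- r^{2-\gamma_q q}(u_r^-)^{q-1}$ have right-hand sides that are bounded in $L^\infty_{\loc}$ with vanishing coefficients since $2 - \gamma_p p = 0$ but the $u_r$ stay bounded, while $2-\gamma_q q>0$ makes the negative term disappear) give compactness in $C^{1,\alpha}_{\loc}$. The crucial point is that the limit equation retains only the positive-part term with the critical scaling, because $\gamma_q > \gamma_p$ forces $r^{2-\gamma_q q} \to 0$; this is exactly the mechanism announced in the introduction whereby homogeneous solutions to \eqref{equation1} appear as blow-ups of \eqref{eq0}. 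Homogeneity of $\bar u$ follows from the almost-monotonicity: $W_{\gamma_p,2}(u_r,0,\rho) = W_{\gamma_p,2}(u,0,r\rho)$ up to lower order terms, which converges as $r\to 0$ to the constant $W(u,0,0^+)$ independent of $\rho$, so the blow-up limit has $\rho \mapsto W_{\gamma_p,2}(\bar u,0,\rho)$ constant, and the Weiss monotonicity formula for the homogeneous limit problem \eqref{equation1} then yields $\gamma_p$-homogeneity of $\bar u$. Finally, the three-fold equivalence: $W(u,0,0^+)\ge 0 \iff \bar u\equiv 0$ comes from computing $W_{\gamma_p,2}(\bar u,0,1)$ for a $\gamma_p$-homogeneous solution of \eqref{equation1} and checking (via the Pohozaev identity for that equation, using $\gamma_p p \ne 2\gamma_p$, i.e. $p\ne 2$) that it is strictly negative unless $\bar u\equiv 0$, in which case it is $0$; and $\bar u\equiv 0 \iff \gamma_p$-degeneracy is immediate from $\sup_{B_1}|u_r| = r^{-\gamma_p}\sup_{B_r(x_0)}|u|$ together with $C^{1,\alpha}_\loc$ convergence.

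In Case (2), write $\widetilde u_r(x) = u(x_0+rx)/H(r)^{1/2}$ with $H(r) = r^{1-n}\int_{\partial B_r(x_0)}u^2$; by construction $\int_{\partial B_1}\widetilde u_r^2 = 1$. The rescaled equation is $-\Delta \widetilde u_r = \lambda_+ r^2 H(r)^{-1}(u^+(x_0+rx)/H(r)^{1/2})^{p-1}H(r)^{(p-1)/2}H(r)^{-1/2}\cdots$ — more cleanly, $-\Delta \widetilde u_r = \lambda_+ r^2 H(r)^{(p-2)/2}(\widetilde u_r^+)^{p-1} - \lambda_- r^2 H(r)^{(q-2)/2}(\widetilde u_r^-)^{q-1}$; since $H(r)/r^{n-1+2\gamma_p}\to+\infty$ gives $H(r) \gg r^{2\gamma_p}$ (as $H\sim r^{2\mathcal V}$ heuristically and $\mathcal V=\gamma_p$ here in the borderline sense), one checks $r^2 H(r)^{(p-2)/2} = r^2 H(r)^{-1}\cdot H(r)^{p/2}\to 0$ because $p<2$ and $H(r)\to 0$, and similarly the $q$-term vanishes, so $\widetilde u_r$ converges in $C^{1,\alpha}_\loc$ to a harmonic function $\widetilde u$ with $\int_{\partial B_1}\widetilde u^2=1$, hence nontrivial. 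Homogeneity of degree $\gamma_p$ comes again from the Weiss functional being asymptotically constant, now rescaled appropriately, forcing $\widetilde u$ to be a homogeneous harmonic polynomial; since homogeneous harmonic polynomials have integer degree, this case requires $\gamma_p\in\N$. The main obstacle throughout is controlling the inhomogeneous error terms in the monotonicity formula uniformly and showing they are genuinely lower-order under the scalings at hand — in particular, verifying the delicate inequalities between $\gamma_p$, $\gamma_q$, $p$, $q$ that make $r^{2-\gamma_q q}\to 0$ and the Weiss-derivative error integrable — and, in Case (2), rigorously justifying the ODE comparison for $H(r)$ that pins down $\mathcal V(u,x_0)=\gamma_p$ exactly rather than merely $\ge\gamma_p$, which I expect to require an Almgren-type frequency function adapted to the inhomogeneous setting.
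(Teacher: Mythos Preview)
Your overall architecture matches the paper's closely: almost-monotonicity of $W_{\gamma_p,2}$ via the Rellich--Pohozaev identity, natural rescaling $u_r=r^{-\gamma_p}u(x_0+r\cdot)$ in Case~(i) with compactness and homogeneity from constancy of the Weiss limit, and normalized rescaling $\widetilde u_r=H(r)^{-1/2}u(x_0+r\cdot)$ in Case~(ii). Two points deserve correction, one minor and one substantive.

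\textbf{Minor.} Your exponent bookkeeping is off: the rescaled equation for $u_r$ reads $-\Delta u_r=\lambda_+(u_r^+)^{p-1}-\lambda_- r^{\gamma_p(q-p)}(u_r^-)^{q-1}$, not with exponents $2-\gamma_p p$ and $2-\gamma_q q$ (the former is nonzero for $p>1$, the latter is negative). The correct identity is $2-\gamma_p(2-p)=0$, and the negative-part coefficient is $r^{\gamma_p(q-p)}\to 0$. Also, $\mathcal V(u,x_0)=\gamma_p$ in Case~(ii) is immediate from the \emph{definition} of vanishing order via $\limsup$ (no Almgren frequency or ODE comparison is needed).

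\textbf{Substantive gap: homogeneity in Case~(ii).} Your plan ``Weiss functional asymptotically constant, now rescaled appropriately'' does not go through for the normalized blow-up. The scaling identity is
\[
W_{\gamma_p,2}(\widetilde u_{x_0,r},0,\rho)=\frac{r^{n-1+2\gamma_p}}{H(u,x_0,r)}\,W_{\gamma_p,2}(u,x_0,\rho r),
\]
and along the subsequence the prefactor tends to $0$ while $W_{\gamma_p,2}(u,x_0,\rho r)$ may tend to $-\infty$; the product is indeterminate, so you cannot conclude that $\rho\mapsto W_{\gamma_p,2}(\widetilde u,0,\rho)$ is constant. The paper extracts only the one-sided inequality
\[
\int_{B_1}|\nabla \widetilde u|^2\,\mathrm{d}x\le \gamma_p\int_{\partial B_1}\widetilde u^2\,\mathrm{d}\sigma
\]
by passing to the limit in the upper bound coming from almost-monotonicity. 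This alone does not force $\gamma_p$-homogeneity of a harmonic function. The missing ingredient is an \emph{upper semi-continuity of the vanishing order} lemma: if $v_k$ solve equations of type \eqref{eq0} with bounded coefficients, $x_k\to\xi$, $\mathcal V(v_k,x_k)\ge\gamma_p$, and $v_k\rightharpoonup\varphi$, then $\mathcal V(\varphi,\xi)\ge\gamma_p$. Its proof is not soft; it iterates the Caffarelli--Friedman expansion with uniform constants along the sequence (or, for $p=1$, uses $C^{1,\alpha}$ convergence directly). Applied to $\widetilde u_{x_0,r_k}$, it yields $\mathcal V(\widetilde u,0)\ge\gamma_p$, hence $D^\sigma\widetilde u(0)=0$ for $|\sigma|\le\beta_p$; combined with the inequality above and the elementary characterization in \cite[Lemma~4.2]{Wei2001}, this gives $\gamma_p$-homogeneity of $\widetilde u$. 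You should add this lemma to your plan; without it Case~(ii) is incomplete.
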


Roughly speaking, Theorem \ref{thm.mainblow} says that, if the vanishing order $\mathcal{V}(u,x_0) \ge \gamma_p$, then: either $u$ behaves like a $\gamma_p$-homogeneous non-trivial solution of \eqref{equation1} close to $x_0$ (case (i) with $\bar u \not \equiv 0$), and in this case plainly $\mathcal{V}(u,x_0) =\gamma_p$; or $u$ behaves like a $\gamma_p$-homogeneous non-trivial harmonic polymonial close to $x_0$, and again $\mathcal{V}(u,x_0) = \gamma_p$ (this case is possible only if $\gamma_p \in \mathbb{N}$); or else $u$ is $\gamma_p$-degenerate at $x_0$.

Since any local minimizer is $\gamma_p$-non-degenerate (Proposition \ref{thm: non-deg}), Theorem \ref{thm: van ord} follows rather directly.

Statements similar to Theorem \ref{thm.mainblow} already appeared in the literature \cite{Bl, MW} when treating unstable-obstacle-type problems, to study the behavior of singular points with respect to quadratic scalings.

\begin{remark}
It is worth to point out that, for solutions to \eqref{eq0}, where $p<q$, the Weiss-type functional $W_{\gamma_p,2}(u, x_0, \cdot)$ is not necessarily monotone. This fact, related to the inhomogeneity of the problem, is one of the obstruction towards the complete classification of the vanihsing orders for $\gamma_p$-non-degenerate solutions.

We had to introduce a correcting additional term to obtain a monotone quantity. In this way, we could prove that the limit
\[
W_{\gamma_p,2}(u,x_0,0^+) = \lim_{r \to 0^+} W_{\gamma_p,2}(u,x_0,r)
\]
does exist (not only up to a subsequence), see Lemma \ref{lem: ex limit} below.

Note that we did not prove the uniqueness of the blow-up limit in general. However, the fact that $W_{\gamma_p,2}(u,x_0,0^+) \ge 0$ implies that \emph{any blow-up limit must vanish}, and hence in this case we do have uniqueness of blow-ups. If $W_{\gamma_p,2}(u,x_0,0^+) < 0$, we can instead conclude that \emph{any blow-up limit is non-trivial}.
\end{remark}

%\begin{remark}
%In Theorem \ref{thm.mainblow}, we don't have the uniqueness of the blow-up limit in general. However, $W_{\gamma_p,2}(u,x_0,0^+)$ exists, the fact that $W_{\gamma_p,2}(u,x_0,0^+) \ge 0$ implies that \emph{any blow-up limit must vanish}, and hence in this case we do have uniqueness of blow-ups. If $W_{\gamma_p,2}(u,x_0,0^+) < 0$, we can instead conclude that \emph{any blow-up limit is non-trivial}.
%\end{remark}

The blow-up alternative is not only useful in the proof of Theorem \ref{thm: van ord}, but ensures also the existence of \emph{non-trivial homogeneous blow-up limits} at any nodal point, for $\gamma_p$-non-degenerate solutions. It is interesting that such blow-up limits only see the smaller power $p$, since they either solve \eqref{equation1}, or are $\gamma_p$-homogeneous harmonic functions, even if we start from a solution of \eqref{eq0}.

At this point it is natural to investigate existence and properties of $\gamma_p$-homogeneous solutions to \eqref{equation1}. This is the content of Section \ref{section.two}, where we obtain a complete classification in dimension $n=2$ for any $p \in (1,2)$. The case $p=1$ was previously treated in \cite[Remark 3.3]{ASW}.

\begin{theorem}\label{thm: homog}
Let $p \in (1,2)$. Then the number of the homogeneous solutions, modulo rotations, of equation \eqref{equation1} in $\R^2$ is the number of the positive integers in the interval $(\gamma_p,2\gamma_p)$. More precisely, for any $k \in (\gamma_p, 2\gamma_p) \cap \mathbb{N}$, there exists precisely one $\gamma_p$-homogeneous solution with $2k$ zeros on the unit sphere $\partial B_1$, up to rotations.
\end{theorem}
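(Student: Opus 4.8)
Fix $p \in (1,2)$, so $\gamma_p = 2/(2-p) > 2$, and work in polar coordinates $(r,\theta)$ on $\R^2$. A $\gamma_p$-homogeneous function has the form $u(r,\theta) = r^{\gamma_p} \phi(\theta)$, and plugging into $-\Delta u = \lambda_+(u^+)^{p-1}$ gives the ODE on the circle
\[
-\phi'' - \gamma_p^2 \phi = \lambda_+ (\phi^+)^{p-1}, \qquad \theta \in \R/2\pi\mathbb{Z},
\]
using the scaling identity $\gamma_p(\gamma_p - 1) + \gamma_p = \gamma_p^2$ together with $(\gamma_p - 2)(p-1) = \gamma_p \cdot(p-1) - 2(p-1)$ — one checks $(\gamma_p - 2)(p-1) = \gamma_p$ follows from $\gamma_p = 2/(2-p)$, so the equation is indeed autonomous in the homogeneity and consistent. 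On any maximal arc where $\phi < 0$ the equation is linear, $\phi'' = -\gamma_p^2 \phi$, so $\phi$ is a pure sinusoid $A\sin(\gamma_p(\theta - \theta_0))$ there; since $\gamma_p > 2$ is not an integer in general, the first interior zero after $\theta_0$ occurs at angular length $\pi/\gamma_p < \pi/2$. On an arc where $\phi > 0$ one has the genuinely nonlinear ODE $-\phi'' = \gamma_p^2 \phi + \lambda_+ \phi^{p-1}$, which is a Hamiltonian system; the key point is to compute the \emph{angular period} of its oscillation, i.e. the angular length $T_+$ of a positive bump that starts and ends at zero.

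\textbf{Reduction to a shooting/counting problem.} A $2k$-zero solution on $\partial B_1$ is, by the rotational symmetry, built from $k$ identical negative arcs and $k$ identical positive bumps glued alternately, each negative arc of angular length $\pi/\gamma_p$ and each positive bump of angular length $T_+ = T_+(A)$ depending on the bump amplitude (equivalently on the matching derivative at the gluing points, which must agree by the $C^1$ regularity of solutions). Closing up around the circle forces
\[
k\left(\frac{\pi}{\gamma_p} + T_+(A)\right) = 2\pi,
\]
and the $C^1$-matching at each of the $2k$ junctions is automatically consistent because both the sinusoidal piece and the nonlinear piece pass through a simple zero with a prescribed slope, and by odd reflection/energy conservation the exit slope of a bump equals minus the entry slope. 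So the whole classification reduces to: for which integers $k$ does there exist an admissible amplitude $A > 0$ solving the displayed equation? This is where I would analyze the function $A \mapsto T_+(A)$: as $A \to 0^+$ the nonlinear term $\lambda_+\phi^{p-1}$ is lower-order (since $p - 1 < 1$, actually it \emph{dominates} near $0$ — care needed), and as $A \to \infty$ the nonlinear term $\phi^{p-1}$ is negligible against $\gamma_p^2 \phi$, so $T_+(A) \to \pi/\gamma_p$ from above (the half-period of the linearization). A monotonicity or at least a range analysis of $T_+$ shows $T_+(A)$ ranges over an interval $(\pi/\gamma_p, \Theta)$ for some $\Theta$, and then $\frac{\pi}{\gamma_p} + T_+(A)$ ranges over $(2\pi/\gamma_p, \pi/\gamma_p + \Theta)$; the condition $k(\pi/\gamma_p + T_+(A)) = 2\pi$ is solvable iff $2\pi/k \in (2\pi/\gamma_p, \pi/\gamma_p + \Theta)$, i.e. iff $\gamma_p/2 < k/2 \cdot (\text{something})$ — rearranged, iff $k$ lies strictly between $\gamma_p$ and $2\gamma_p$ (the lower bound $k > \gamma_p$ coming from $\pi/\gamma_p + T_+ < 2\pi/\gamma_p \cdot(\dots)$; the precise endpoint $2\gamma_p$ will fall out of computing $\Theta$, which should turn out to equal $\pi/\gamma_p$ again in the relevant limit, giving total angular length approaching $2\pi/\gamma_p$ from one side and $\pi/\gamma_p$-contribution being the rigid part). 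I would pin down the endpoints by the two limiting regimes and an intermediate value argument, and get uniqueness of $A$ for each admissible $k$ from strict monotonicity of $T_+(A)$.

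\textbf{Main obstacle.} The crux — and the part that genuinely uses $p \in (1,2)$ rather than $p = 1$ — is the quantitative control of the angular period map $A \mapsto T_+(A)$ of the nonlinear Hamiltonian ODE $-\phi'' = \gamma_p^2 \phi + \lambda_+ \phi^{p-1}$: proving it is continuous, strictly monotone, and computing its two limiting values $\lim_{A\to 0^+} T_+(A)$ and $\lim_{A \to \infty} T_+(A)$. The large-amplitude limit is the linearization and should give $\pi/\gamma_p$; the small-amplitude limit is subtler because the sublinear term $\phi^{p-1}$ dominates $\gamma_p^2\phi$ as $\phi \to 0^+$, so the rescaled problem degenerates to the pure Alt–Phillips ODE $-\psi'' = \lambda_+ \psi^{p-1}$ whose positive-bump angular length is a finite explicit number (a Beta-function expression), and this is precisely what produces the upper endpoint $2\gamma_p$. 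I expect to spend most of the work setting up the change of variables (time-map / quadrature formula $T_+(A) = \sqrt{2}\int_0^A \frac{d\phi}{\sqrt{H(A) - H(\phi)}}$ with $H(\phi) = \frac{\gamma_p^2}{2}\phi^2 + \frac{\lambda_+}{p}\phi^p$), differentiating under the integral sign to get monotonicity, and carefully extracting the two limits; the gluing, counting, and the reduction to the Diophantine condition on $k$ are then comparatively routine, as is invoking Theorem~\ref{thm.mainblow}/earlier regularity to guarantee the pieced-together $\phi$ yields a bona fide $C^{1,\alpha}$ weak solution of \eqref{equation1}.
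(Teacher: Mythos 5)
Your overall strategy — decomposing a $\gamma_p$-homogeneous solution into alternating negative arcs (of fixed opening $\pi/\gamma_p$, since the equation is linear on $\{\phi<0\}$) and positive bumps, using the Hamiltonian and a quadrature (time-map) formula to control the bump length, matching $C^1$ at the zeros via energy conservation, and counting via the closure condition $k\bigl(\pi/\gamma_p + T_+\bigr) = 2\pi$ — is exactly the approach the paper takes in Lemma~\ref{lem.Tk} and Proposition~\ref{prop.uniqueness}. So the architecture is right.

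However, the limit analysis of the time-map, which you correctly identify as the crux, is wrong in a way that is not merely a slip of sign. Writing $M$ for the bump amplitude, the quadrature formula gives
\[
T_+(M) = 2\int_0^1 \frac{\mathrm{d}t}{\sqrt{\gamma_p^2(1-t^2) + \frac{2\lambda_+}{p\,M^{2-p}}(1-t^p)}}.
\]
The added term $\frac{2\lambda_+}{p}\phi^{p-1}$ is a \emph{positive} extra restoring force on $\{\phi>0\}$, so it makes the bump more concave and hence \emph{shorter}: one has $T_+(M) < \pi/\gamma_p$ for every $M$, and $T_+(M)\to\pi/\gamma_p$ from \emph{below} (not from above, as you write) as $M\to+\infty$. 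At the other end, as $M\to 0^+$, the extracted scaling gives $T_+(M) \sim c\, M^{(2-p)/2} \to 0$. In particular, your claim that the pure Alt--Phillips ODE $-\psi'' = \lambda_+\psi^{p-1}$ contributes a finite nonzero ``Beta-function'' angular length in the small-amplitude regime is incorrect: that equation is scale-invariant under $\psi\mapsto \mu^{\gamma_p}\psi(\cdot/\mu)$, so its bump lengths form a continuum shrinking to $0$ with the amplitude, and there is no distinguished finite value. Consequently $T_+$ ranges over $(0, \pi/\gamma_p)$, \emph{not} over $(\pi/\gamma_p,\Theta)$; the closure condition then reads $2\pi/k = \pi/\gamma_p + T_+(M) \in (\pi/\gamma_p, 2\pi/\gamma_p)$, which is equivalent to $\gamma_p<k<2\gamma_p$. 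With the interval you wrote down, the condition $2\pi/k > 2\pi/\gamma_p$ would instead force $k<\gamma_p$, i.e.\ the wrong inclusion. Your hedged remark that $\Theta$ ``should turn out to equal $\pi/\gamma_p$'' would leave an empty interval $(\pi/\gamma_p,\pi/\gamma_p)$, signalling that the picture had not quite been pinned down. Once the direction of the limits is corrected, the rest of your outline (monotonicity of $T_+$ by differentiating under the integral sign, uniqueness of the admissible amplitude for each $k$, and the $C^1$ gluing via the Hamiltonian) goes through essentially as in the paper.
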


This is very different to what happens both for linear equations (e.g. the Laplace equation has infinitely many homogeneous solutions with arbitrary integer degree), and for the sublinear equation \eqref{eq.sublinear.ST} (which admits infinitely many $\gamma_p$-homogeneous solutions, see \cite[Theorem 1.10]{soavesublinear}). Instead, it is a non-trivial generalization of what happens for the unstable obstacle problem \eqref{unst}, which admits a unique $2$-homogeneous solution, modulo rotations; as shown in \cite[Remark 3.3]{ASW}, this solution has exactly $6$ zeros on the unit circle. Now notice that \eqref{unst} is precisely \eqref{equation1} with $p=1$; in such case, $\gamma_p=2$, and there exists precisely one integer $k=3$ in the interval $(\gamma_p,2\gamma_p) = (2,4)$.

Once that homogeneous solutions in the plane are classified, it is rather standard to infer an optimal bound on the Hausdorff dimension of the singular set, via the dimension reduction principle.

\begin{theorem}\label{thm: hausdorff}
Let $u$ be either a solution to \eqref{eq0}, or a solution to \eqref{equation1}, in $\Omega \subset \R^n$. Suppose that $u$ is $\gamma_p$-non-degenerate. Then the Hausdorff dimension of the singular set $S(u)= \Omega \cap \{u=0=|\nabla u|\}$ is less than or equal to $n-2$. Moreover, the singular set is discrete in dimension $n=2$.

In particular, the thesis holds for local minimizers of \eqref{eq0} or \eqref{equation1}.
\end{theorem}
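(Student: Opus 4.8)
The plan is to run the classical \emph{dimension reduction principle} of Federer, in the form developed for nodal sets of elliptic equations (cf.\ \cite{CafFri85, Han94}), feeding in the blow-up alternative of Theorem \ref{thm.mainblow}, the expansion of Proposition \ref{prop.underdog}, and---in the two-dimensional case---the classification of Theorem \ref{thm: homog}. The first step is to record what $\gamma_p$-non-degeneracy gives at the level of vanishing orders: combining the preliminary dichotomy $\mathcal{V}(u,x_0)\in\{k\in\N:\,k\le\beta_p\}\cup[\gamma_p,+\infty)$ with Theorem \ref{thm.mainblow}, whose degenerate branch of alternative (i) is ruled out by the non-degeneracy assumption, every nodal point satisfies $\mathcal{V}(u,x_0)\in\{k\in\N:\,k\le\beta_p\}\cup\{\gamma_p\}$; and since $\mathcal{V}(u,x_0)=1$ forces $\nabla u(x_0)\ne 0$ via Proposition \ref{prop.underdog}, every $x_0\in S(u)$ has $\mathcal{V}(u,x_0)\in\{k\in\N:\,2\le k\le\beta_p\}\cup\{\gamma_p\}$. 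Thus $S(u)$ is the finite disjoint union of the strata $\{\mathcal{V}(u,\cdot\,)=k\}$, $2\le k\le\beta_p$, and $\{\mathcal{V}(u,\cdot\,)=\gamma_p\}$.

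Next I would describe the admissible blow-up limits. At $x_0$ with $\mathcal{V}(u,x_0)=k\le\beta_p$, Proposition \ref{prop.underdog} gives $u(x)=P_{x_0}(x-x_0)+\Gamma_{x_0}(x)$ with $P_{x_0}$ a non-trivial $k$-homogeneous harmonic polynomial and $r^{-k}u(x_0+r\,\cdot\,)\to P_{x_0}$ in $C^{1,\alpha}_\loc(\R^n)$. At $x_0$ with $\mathcal{V}(u,x_0)=\gamma_p$, Theorem \ref{thm.mainblow}, with $\gamma_p$-non-degeneracy excluding the trivial limit, gives that every blow-up, normalized as in alternative (i) or (ii), converges in $C^{1,\alpha}_\loc(\R^n)$ to a non-trivial $\gamma_p$-homogeneous function which is either a solution of \eqref{equation1} or a homogeneous harmonic polynomial (the latter only if $\gamma_p\in\N$). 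In every case the limit $v$ is a non-trivial homogeneous function of degree $>1$, so $0\in S(v)$, and $v$ belongs to the class
\[
\mathcal{C}:=\big\{\gamma_p\text{-homogeneous solutions of }\eqref{equation1}\big\}\cup\big\{\text{homogeneous harmonic polynomials of degree in }\{2,\dots,\beta_p\}\cup\{\gamma_p\}\big\}.
\]
This class is invariant under the rescalings $v\mapsto v(\rho\,\cdot\,)/\rho^{\deg v}$ and, since its elements are entire solutions to which Proposition \ref{prop.underdog} and Theorem \ref{thm.mainblow} apply again, it is closed under iterated blow-up at singular points, each such blow-up gaining at least one translation-invariance direction (the usual spine-growth mechanism for homogeneous functions).

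The one genuinely indispensable extra ingredient of Federer's principle is the base estimate that \emph{no non-trivial $v\in\mathcal{C}$ is invariant under translations along a whole line}, equivalently that $\mathcal{C}$ contains no non-trivial one-variable function. For a homogeneous harmonic polynomial of degree $\ge2$ this is immediate. If $v(x)=w(x\cdot e)$ is $\gamma_p$-homogeneous and solves \eqref{equation1}, homogeneity forces $w(t)=a_+ t^{\gamma_p}$ for $t>0$ and $w(t)=a_-|t|^{\gamma_p}$ for $t<0$; inserting this into $-w''=\lambda_+(w^+)^{p-1}$ and using the identity $(p-1)\gamma_p=\gamma_p-2$, one finds $-a_\pm\gamma_p(\gamma_p-1)=\lambda_+ a_\pm^{p-1}$ on any half-line where $a_\pm>0$, impossible since the left-hand side is negative and the right-hand side positive, whereas $a_\pm<0$ would force $a_\pm\gamma_p(\gamma_p-1)=0$, impossible since $\gamma_p>1$; hence $a_+=a_-=0$ and $w\equiv0$. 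Federer's dimension reduction then yields $\dim_{\mathcal{H}}S(u)\le n-2$; the compactness of $\mathcal{C}$ in $C^{1,\alpha}_\loc(\R^n)$ and the upper semicontinuity of the stratification required by the abstract principle are supplied by the uniform $C^{1,\alpha}$ estimates, together with the monotonicity and the existence of the limit of the corrected Weiss functional $W_{\gamma_p,2}$ (Lemma \ref{lem: ex limit}) and the Almgren-type monotonicity governing the integer strata.

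For the refinement in dimension $n=2$ the bound already gives $\dim_{\mathcal{H}}S(u)\le 0$; to upgrade it to discreteness, suppose $x_j\to x_0$ with $x_j\in S(u)\setminus\{x_0\}$ and blow $u$ up at $x_0$ along $r_j:=|x_j-x_0|\to 0^+$. The $C^{1,\alpha}_\loc$-convergence produces a non-trivial $v\in\mathcal{C}$ on $\R^2$; along a subsequence $(x_j-x_0)/r_j\to\xi\in\partial B_1$, and since $u(x_j)=0=|\nabla u(x_j)|$, passing to the limit gives $v(\xi)=0=|\nabla v(\xi)|$, i.e.\ $\xi\in S(v)$. But Theorem \ref{thm: homog}, together with the elementary fact that a planar homogeneous harmonic polynomial has only simple zeros on $\partial B_1$, shows that every non-trivial $v\in\mathcal{C}$ on $\R^2$ has $S(v)=\{0\}$, contradicting $\xi\in\partial B_1\cap S(v)$. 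Finally, local minimizers are $\gamma_p$-non-degenerate by Proposition \ref{thm: non-deg}, so the conclusions hold for them. I expect the main obstacle to be not the base estimate---an elementary ODE computation---but the verification that, in a setting where \eqref{eq0} carries no intrinsic scaling, $\mathcal{C}$ really is a closed, compact class with upper-semicontinuous stratification, so that Federer's machinery applies: this must be extracted from the corrected (monotone) Weiss functional and the expansions of Proposition \ref{prop.underdog}, rather than from any naive scaling of the equation.
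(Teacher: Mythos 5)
Your proposal is correct and takes essentially the same approach as the paper: Federer's dimension reduction principle, fed by the blow-up alternative of Theorem \ref{thm.mainblow} and anchored at the two-dimensional classification of Theorem \ref{thm: homog}. You additionally make explicit the ``base estimate'' (no non-trivial one-variable cone in the class of blow-up limits, via the elementary ODE computation using $(p-1)\gamma_p=\gamma_p-2$) and the $n=2$ discreteness argument, details which the paper defers to \cite{MW} and \cite{soavesublinear}.
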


Theorems \ref{thm: van ord}-\ref{thm: hausdorff} provide a complete description for local minimizers, and, more in general, for $\gamma_p$-non-degenerate solutions.

At this point it is natural to investigate the existence of $\gamma_p$-degenerate solutions. An adaptation of the strategy in \cite[Corollary 4.4]{AW} (which concerns the case $p=1$, $\lambda_-=0$) allows us to prove the existence of $\gamma_p$-degenerate solutions for both \eqref{eq0} and \eqref{equation1}.

\begin{theorem}\label{thm: ex deg}
Let $\lambda_+>0$, $\lambda_->0$ (resp. $\lambda_-=0$), and $1<p<q<2$. There exists a non-trivial solution $u$ of \eqref{eq0} (resp. \eqref{equation1}) in $B_1$ which is $\gamma_p$-degenerate at the origin, that is
\[
\liminf_{r \to 0^+} \frac1{r^{\gamma_p}} \sup_{B_r(0)} |u| = 0.
\]
\end{theorem}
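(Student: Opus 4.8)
The plan is to adapt the construction in \cite[Corollary 4.4]{AW} to the inhomogeneous setting, producing a solution whose oscillation near the origin decays strictly faster than $r^{\gamma_p}$ along a sequence of radii. The idea is to build $u$ as a limit of a sequence of solutions $u_j$ in annuli (or in $B_1$) which have prescribed, very small boundary data on small spheres $\partial B_{\rho_j}$, with $\rho_j\to 0$, chosen so that $\sup_{B_{\rho_j}}|u_j|$ is forced to be much smaller than $\rho_j^{\gamma_p}$. Concretely, first I would fix a radius $\rho_1$ and a smooth boundary datum $g_1$ on $\partial B_1$, obtaining by Theorem \ref{thm: ex} a local minimizer $v_1$; then, using the interior $C^{1,\alpha}$ regularity recalled in the introduction and the non-degeneracy/blow-up machinery of Theorem \ref{thm.mainblow}, I would locate a nodal point of $v_1$ and rescale so that on a small sphere the solution is uniformly small relative to the radius to the power $\gamma_p$. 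Iterating this procedure and passing to the limit (using compactness in $C^{1,\alpha}_\loc$, exactly as in the blow-up arguments), one obtains a solution $u$ of \eqref{eq0} (resp.\ \eqref{equation1}) in $B_1$ with a sequence $r_k\to 0^+$ along which $r_k^{-\gamma_p}\sup_{B_{r_k}}|u|\to 0$.

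The key steps, in order, are: (1) a \emph{quantitative rescaled a priori estimate}: if $u$ solves \eqref{eq0} with $\|u\|_{L^\infty(\partial B_r)}\le \delta$, then $\|u\|_{L^\infty(B_r)}\le C(\delta + r^{\gamma_p}\delta^{(p-1)\gamma_p/?}+\dots)$ — more precisely one wants to see that the right-hand side contribution scales like a positive power of $\delta$ times $r^{\gamma_p}$, so that taking $\delta$ small kills the $\gamma_p$-homogeneous part; this follows by writing $u=h+w$ with $h$ harmonic with the same boundary data ($\|h\|_\infty\le\delta$ by the maximum principle) and estimating $w$ via the Poisson representation of $-\Delta w = \lambda_+(u^+)^{p-1}-\lambda_-(u^-)^{q-1}$ together with a bootstrap on $\|u\|_\infty$. (2) A \emph{gluing/iteration scheme} producing nested balls $B_{\rho_j}$ and solutions agreeing outside $B_{\rho_j}$, with $\sup_{B_{\rho_j}}|u_j| \le 2^{-j}\rho_j^{\gamma_p}$. (3) A \emph{passage to the limit}, where one must check that the limit function is still a (non-trivial) solution and that the decay estimate survives; non-triviality is guaranteed by keeping the boundary datum on $\partial B_1$ fixed and nonzero throughout.

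The main obstacle I expect is Step (1): in the homogeneous Alt--Phillips problem \eqref{unst} ($p=1$) the equation is $-\Delta u=\lambda_+$ on $\{u>0\}$ and one has explicit radial barriers, whereas for general $p\in(1,2)$ the nonlinearity $(u^+)^{p-1}$ has no scaling invariance and no simple barrier. The remedy is to exploit precisely the sublinearity: on a ball of radius $r$ where $|u|\le M$, the source term is bounded by $\lambda_+ M^{p-1}+\lambda_- M^{q-1}$, and the corresponding particular solution of the Poisson equation is $O\big(r^2(M^{p-1}+M^{q-1})\big)$; feeding this into a fixed-point/continuity argument on $M=\|u\|_{L^\infty(B_r)}$ gives $M\lesssim \delta + r^2 M^{p-1}$, from which $M \lesssim \max\{\delta,\ r^{2/(2-p)}\} = \max\{\delta, r^{\gamma_p}\}$ (here using $q>p$ so the $q$-term is lower order when $M\le 1$). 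Hence if at scale $r$ we arrange $\delta \ll r^{\gamma_p}$, then $M\lesssim r^{\gamma_p}$ with a \emph{small} constant, and a further rescaling by a bounded factor drives $r^{-\gamma_p}M$ below any prescribed threshold. Assembling these estimates carefully along the iteration, together with the $C^{1,\alpha}$ compactness already used for blow-ups, yields a non-trivial $\gamma_p$-degenerate solution and completes the proof; note the hypothesis $p>1$ is used to guarantee $C^2$ regularity and to make the fixed-point contraction estimate clean.
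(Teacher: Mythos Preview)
Your proposal has a genuine gap at Step~(1). The bootstrap inequality $M \le \delta + C r^2 M^{p-1}$ does \emph{not} yield $M \ll r^{\gamma_p}$ when $\delta \ll r^{\gamma_p}$: even with $\delta = 0$ it only gives $M^{2-p} \le C r^2$, i.e.\ $M \le C^{1/(2-p)} r^{\gamma_p}$, with a \emph{fixed} constant that cannot be driven to zero by any choice of boundary data. This is not an artifact of the estimate---the equation admits genuinely $\gamma_p$-homogeneous solutions (Theorem~\ref{thm: homog}), and their restrictions to small balls satisfy $\sup_{B_r}|u| \sim r^{\gamma_p}$ with a universal positive constant regardless of how one normalizes the trace. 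More fundamentally, by Proposition~\ref{thm: non-deg} \emph{every local minimizer is $\gamma_p$-non-degenerate}; since your Step~(2) produces solutions via Theorem~\ref{thm: ex} (i.e.\ as minimizers), the scheme will necessarily output non-degenerate solutions at each stage, and no limiting procedure can undo this. The ``gluing/iteration'' is also not substantiated: for a nonlinear equation one cannot patch solutions on annuli, and you give no mechanism to select a small branch of the (multi-valued) Dirichlet solver at each scale.

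The paper's construction is completely different and is genuinely two-dimensional. One fixes an integer $k > 2\gamma_p$ and uses a Schauder fixed-point argument (Lemma~\ref{lem.anderson}) to produce, in the space of functions invariant under the dihedral group generated by $k$ reflections, a solution in $B_1$ with boundary datum $r^k\cos(k\theta)-\kappa$ and $u(0)=0$. Degeneracy then follows by \emph{exclusion} via the blow-up alternative (Theorem~\ref{thm.mainblow}) together with the classification of planar $\gamma_p$-homogeneous solutions of \eqref{equation1} (Theorem~\ref{thm: homog} and Remark~\ref{rem: opening}): any nontrivial such solution has negative nodal cones of opening exactly $\pi/\gamma_p$, which is incompatible with the imposed $2\pi/k$-periodicity since $k>2\gamma_p$. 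Hence every blow-up at $0$ must vanish, i.e.\ $u$ is $\gamma_p$-degenerate. The symmetry constraint---absent from your outline---is the mechanism that rules out the $\gamma_p$-homogeneous profiles; it cannot be replaced by an $L^\infty$ barrier argument.
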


The classification of the vanishing orders and the blow-up analysis for these degenerate solutions remain interesting open problems.

Note that the assumption $p<q$ is crucial in Theorem \ref{thm: ex deg}, since in \cite{soavesublinear} it is showed that, when $1 \le p=q<2$, then \emph{all} solutions are $\gamma_p$-non-degenerate. This difference enters also in the blow-up analysis; thus, from this point of view, our analysis of \eqref{eq0} is somehow closer to the one carried out in \cite{MW} for the unstable obstacle problem, rather than to the one in \cite{soavesublinear}.

\begin{remark}
Equations \eqref{eq0} and \eqref{equation1} could be also considered in the singular range, when $p \in (0,1)$ (and $q \in (p,2)$). While our approach exploits the fact that $p \ge 1$ in several steps, it is natural to expect that for $p \in (0,1)$ a singular perturbation argument may lead to existence of solutions, and may allow to study the blow-up behavior. This is left as an open problem. See for instance \cite{soavesingular} for a similar approach in the homogeneous case $0<p=q<1$.
\end{remark}

\subsection*{Structure of the paper.} In Section \ref{section.preli}, we present some results which are essentially known, or can be obtained by previous contributions with minor changes, and which will be frequently used in the rest of the paper. Section \ref{sec: ex and non-deg} is devoted to the existence of minimizers for the problem with fixed traces (Theorem \ref{thm: ex}), and to the $\gamma_p$-non-degeneracy of local minimizers (Proposition \ref{thm: non-deg}). The blow-up analysis is the content of Section \ref{section.blowup}, which contains the proofs of Theorems \ref{thm: van ord} and \ref{thm.mainblow}. Section \ref{section.two} is devoted to the classification of homogeneous solutions to \eqref{equation1} (Theorem \ref{thm: homog}), and to the estimate on the Hausdorff dimension of the singular set for $\gamma_p$-non-degenerate solutions (Theorem \ref{thm: hausdorff}). Finally, Section \ref{section.twodeg} contains the construction of the $\gamma_p$-degenerate solutions (Theorem \ref{thm: ex deg}).

\section{Preliminaries}\label{section.preli}

This section is devoted to some results which will be frequently used throughout the rest of the paper.

At first, we present a preliminary analysis on the local behavior of solutions to \eqref{eq0} or \eqref{equation1} close to the nodal set. Recall that $\mathcal{V}(u,x_0)$ denotes the vanishing order, introduced in Definition \ref{def: order V}, and that $\gamma_p$ is defined in \eqref{gamma_p}.

\begin{proposition}\label{prop.underdog}
Suppose that $u \in H^1_\loc(B_1)$ is a non-trivial solution to \eqref{eq0} or \eqref{equation1}. Then, for any $x_0 \in \{u=0\}$ and $r \in (0,1-|x_0|)$, the following alternative holds:
  \begin{enumerate}
    \item either $\mathcal{V}(u,x_0) \in \{k \in \N\setminus \{0\}\colon k\leq \beta_p\}$ and there exist a non-trivial homogeneous harmonic polynomial $P_{x_0}$ of degree $\mathcal{V}(u,x_0)$ and a function $\Gamma_{x_0}$ such that
        \be\label{taylor}
        u(x) = P_{x_0}(x-x_0) + \Gamma_{x_0}(x) \quad \mbox{in }B_r(x_0)
        \ee
        with
        $$
        \begin{cases}
        |\Gamma_{x_0}(x)| \leq C|x-x_0|^{\mathcal{V}(u,x_0)+\delta}\\
        |\nabla\Gamma_{x_0}(x)| \leq C|x-x_0|^{\mathcal{V}(u,x_0)-1+\delta}
        \end{cases}\quad\mbox{in }B_r(x_0)
        $$
        for some constants $C,\delta>0$;
    \item or $\mathcal{V}(u,x_0)\geq \gamma_p$ and for every $\eps>0$ there exists $C_\eps>0$ such that $$
        \begin{cases}
        |u(x)| \leq C_\eps|x-x_0|^{\gamma_p-\eps}\\
        |\nabla u(x)| \leq C_\eps|x-x_0|^{\gamma_p -1-\eps}
        \end{cases}\quad\mbox{in }B_r(x_0).
        $$
  \end{enumerate}
\end{proposition}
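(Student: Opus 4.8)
The plan is to combine a Schauder/elliptic-estimates argument with an Almgren-type frequency or monotonicity consideration applied to the ``linearized'' equation. The crucial structural observation is that for $x$ near $x_0$, where $u(x_0)=0$, one has $|u(x)|\le C|x-x_0|$ (since $u\in C^{1}$), and hence the right-hand side $f(x):=\lambda_+(u^+)^{p-1}-\lambda_-(u^-)^{q-1}$ satisfies $|f(x)|\le C|x-x_0|^{p-1}$ near $x_0$, since $p\le q$ and $p-1\ge 0$. Thus $u$ solves $-\Delta u = f$ with $f\in C^{0,\alpha}_{\loc}$ and $|f(x)|\le C|x-x_0|^{p-1}$. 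The idea is then a bootstrap: if we know a priori that $|u(x)|\le C|x-x_0|^{\mu}$ for some $\mu\in[1,\gamma_p)$, then $|f(x)|\le C|x-x_0|^{(p-1)\mu}$, and Newtonian-potential estimates upgrade the decay of $u$ to order $\min\{\mu+2, (p-1)\mu+2\}$ around $x_0$ — unless an ``obstruction'' occurs at an integer order below $\gamma_p$, where a homogeneous harmonic polynomial appears and cannot be improved further. Note $(p-1)\mu+2>\mu \iff \mu<\gamma_p$, so each step strictly improves the decay as long as we stay below the critical exponent $\gamma_p$, and the iteration either reaches $\gamma_p$ (possibly minus $\eps$, giving alternative (ii)), or gets pinned at an integer $k\le\beta_p$.

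First I would set up the base case via the vanishing-order definition: since $u$ is nontrivial and (by the SUCP, cf.\ Proposition \ref{prop: UCP}) has finite vanishing order, $\cV(u,x_0)=:\mu_0\in[1,\infty)$ (the lower bound $1$ because $u\in C^{0,1}$ and $u(x_0)=0$). Then I would run the following dichotomy at each stage. Suppose at some stage we have established $|u(x)|\le C_\eps |x-x_0|^{\mu-\eps}$ and $|\nabla u(x)|\le C_\eps|x-x_0|^{\mu-1-\eps}$ for all small $\eps$, with $\mu\le\cV(u,x_0)$ and $\mu<\gamma_p$. Plugging this into $-\Delta u = f$, one has $|f(x)|\le C_\eps|x-x_0|^{(p-1)(\mu-\eps)}$, i.e.\ a Hölder right-hand side vanishing to order $\nu:=(p-1)(\mu-\eps)>\mu-2$. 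Write $u = h + N_f$ where $N_f$ is a Newtonian potential of (a cutoff of) $f$ and $h$ is harmonic in $B_r(x_0)$. Potential estimates (or the classical lemma that a harmonic-plus-controlled-source expansion holds, e.g.\ in the style of \cite{CafFri85, Han94}) give $|N_f(x)|\le C_\eps|x-x_0|^{\min\{\nu+2,\,2\}}$ up to possibly a harmonic correction; and the harmonic part $h$ has a convergent expansion into homogeneous harmonic polynomials $\sum_{k\ge k_0} P_k$. The leading order of $u$ is then governed by $\min\{k_0,\ \nu+2\}$. If $k_0 \le \beta_p$ and $k_0 < \nu + 2$ (equivalently $k_0$ is ``below'' the source contribution), then $P_{k_0}$ is a genuine nontrivial homogeneous harmonic polynomial, $\cV(u,x_0)=k_0$, and we are in case (i): the remainder $\Gamma_{x_0}:=u-P_{k_0}(\cdot-x_0)$ absorbs both the higher harmonic terms and the potential, giving $|\Gamma_{x_0}(x)|\le C|x-x_0|^{k_0+\delta}$ with $\delta=\min\{1,\nu+2-k_0\}>0$ (and similarly for $\nabla\Gamma_{x_0}$), where one uses interior gradient estimates for the Newtonian potential. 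Otherwise the source dominates, the decay of $u$ improves to order $\nu+2=(p-1)(\mu-\eps)+2 > \mu$, and we reiterate with the improved exponent $\mu':=(p-1)\mu+2$ (letting $\eps\to 0$ along the iteration). Since $\mu\mapsto (p-1)\mu+2$ is a contraction toward the fixed point $\gamma_p$ from below, finitely many iterations push $\mu$ past any value $<\gamma_p$; taking the limit yields the decay estimates of alternative (ii) with $\gamma_p-\eps$ for every $\eps>0$, and simultaneously $\cV(u,x_0)\ge\gamma_p$.

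The main obstacle I expect is making the ``harmonic-plus-potential'' splitting and the obstruction-detection rigorous and \emph{uniform} across the iteration — in particular, controlling the constants $C_\eps$ so the bootstrap does not blow up after finitely many steps, and handling the borderline integer cases where $(p-1)\mu+2$ coincides with an integer (producing possible logarithmic losses, which is exactly why the statement is phrased with $\gamma_p-\eps$ rather than $\gamma_p$). A clean way to organize this, rather than iterating Newtonian potentials by hand, is to invoke an Almgren-type frequency function for the equation $-\Delta u = f$ with the a priori bound on $f$: monotonicity of a suitably perturbed frequency (as in \cite{soaveweth, soavesublinear}) forces $\cV(u,x_0)$ to be either the limiting frequency, which one shows is an integer $\le\beta_p$ with an associated blow-up that is a homogeneous harmonic polynomial, or else $\ge\gamma_p$; the decay estimates in case (i) then come from the fact that the blow-up is a harmonic polynomial together with a quantitative rate of convergence of the frequency (an epiperimetric- or log-type estimate), while in case (ii) they follow from the sub-criticality $\mu<\gamma_p$ of the source term via a straightforward comparison argument. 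I would present whichever route keeps the constant-tracking most transparent; in either case the heart of the matter is that $(p-1)\mu+2>\mu$ precisely for $\mu<\gamma_p$, which is what both prevents obstructions above $\beta_p$ and forces the threshold $\gamma_p$ to appear.
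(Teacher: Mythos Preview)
Your proposal is correct and follows essentially the same route as the paper. The paper's proof simply defers to \cite[Proposition~2.1]{soavesublinear} after noting the one observation you also make, namely that $|u(x)|\le C|x-x_0|^{\alpha}$ forces $|\Delta u(x)|\le C|x-x_0|^{(p-1)\alpha}$ since $p<q$; the underlying argument (spelled out in the proof of Lemma~\ref{lem: upper sc} in this paper) is exactly your bootstrap $\mu\mapsto(p-1)\mu+2$ toward the fixed point $\gamma_p$, implemented via \cite[Lemma~1.1]{CafFri85}, which is the ``harmonic polynomial plus controlled-source remainder'' decomposition you describe, with small perturbations $\delta_k$ inserted to avoid integer exponents.
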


\begin{proof}
The proposition is stated and proved in \cite[Proposition 2.1]{soavesublinear} for equations \eqref{equation1} and  \eqref{eq.sublinear.ST}. The proof can be repeated almost verbatim also if $u$ solves \eqref{eq0}; it is sufficient to observe that, if $|u(x)| \le C |x-x_0|^\alpha$ for some $\alpha>0$, then
\[
|\Delta u(x)| \le C\lambda_+ |x-x_0|^{(p-1)\alpha} +  C\lambda_- |x-x_0|^{(q-1)\alpha} \le C\max\{\lambda_+,\lambda_-\} |x-x_0|^{(p-1)\alpha},
\]
whenever $|x-x_0| <1$. Thus, the argument used in the proof only ``sees" the smaller power $p<q$.
\end{proof}

Now, following \cite{soavesublinear}, we introduce a family of Weiss-type functionals depending on two parameters.

Precisely, let $u \in H^1_{\loc}(B_1)$ be a solution to \eqref{eq0} or \eqref{equation1} and $x_0 \in \{u=0\}, r \in (0,1-|x_0|)$. For $t>0$, we consider the functionals
\begin{align}\label{E.H}
\begin{aligned}
D_{t}(u,x_0,r)&=\int_{B_r(x_0)} \left( \abs{\nabla u}^2- t F_{\lambda_+,\lambda_-}(u)\right)\mathrm{d}x,\\
H(u,x_0,r)&=\int_{\partial B_r(x_0)}{u^2 \, \mathrm{d}\sigma},
\end{aligned}
\end{align}
where
\[
F_{\lambda_+,\lambda_-}(u) =F_{\lambda_\pm}(u) = \frac{\lambda_+}{p}(u^+)^p + \frac{\lambda_-}{q}(u^-)^q
\]
(note that the definition of $F_{\lambda_\pm}$ slightly differs from the one in \cite{soavesublinear}; as a result, some of the next formulas present minor changes). Moreover, we consider the associated $2$-parameters Weiss-type functionals
\be\label{equation.weiss}
W_{\gamma,t}(u,x_0,r) = \frac{1}{r^{n-2+2\gamma}}D_{t}(u,x_0,r) - \frac{\gamma}{r^{n-1+2\gamma}} H(u,x_0,r)
\ee

\begin{remark}\label{rem: su W scaled}
We stress that both the definition of $D_t$ and of $W_{\gamma,t}$, depends also on the coefficients  $\lambda_+$ and $\lambda_-$ appearing in \eqref{eq0} or \eqref{equation1}. In particular, when we scale $u$, then in general also $\lambda_\pm$ change accordingly. Since however $\lambda_\pm$ are unambiguously determined by $u$, via the differential equation, we will not stress this dependence.
\end{remark}

Now, by proceeding exactly as in \cite[Section 2]{soaveweth} and \cite[Section 2]{soavesublinear}, we can easily obtain the expression of the derivatives of $D_t$, $H$ and $W_{\gamma,t}$ with respect to $r$, for the whole range of parameters $t$ and $\gamma$.

\begin{proposition}\label{weiss.mon}
Let $u \in H^1_{\loc}(B_1)$ solve \eqref{eq0} or \eqref{equation1} in $B_1$, and let $x_0 \in \{u=0\}, r \in (0,1-|x_0|)$. Then
\begin{align*}%\label{weiss.general}
\begin{aligned}
  \frac{d}{dr}W_{\gamma,t}(u,x_0,r) =&\, \frac{2}{r^{n-2+2\gamma}}\int_{\partial B_r(x_0)}{\left(\partial_r u -\frac{\gamma}{r}u\right)^2 \mathrm{d}\sigma} + \frac{2-t}{r^{n-2+2\gamma}} \int_{\partial B_r(x_0)}{F_{\lambda_\pm}(u)\,\mathrm{d}\sigma} \\
  &\, - \frac{C_{n,t}-2\gamma(t-p)}{p \, r^{n-1+2\gamma}} \int_{B_r(x_0)} \lambda_+(u^+)^p\, \mathrm{d}x - \frac{C_{n,t}-2\gamma(t-q)}{q \, r^{n-1+2\gamma}}  \int_{B_r(x_0)} \lambda_-(u^-)^q\, \mathrm{d}x,
    \end{aligned}
  \end{align*}
where $C_{n,t} = 2n-t(n-2)$. In particular,
\begin{itemize}
\item[(i)] if $u$ solves \eqref{eq0}, then $r \mapsto W_{\gamma,2}(u,x_0,r)$ is monotone non-decreasing for $\gamma \ge \gamma_q =2/(2-q)$;
\item[(ii)] if $u$ solves \eqref{equation1} then $r \mapsto W_{\gamma,2}(u,x_0,r)$ is monotone non-decreasing for $\gamma \ge \gamma_p =2/(2-p)$, and $W_{\gamma_p,2}(u,x_0,\cdot\,)$ is constant for $r \in (r_1,r_2)$ if and only if $u$ is $\gamma_p$-homogeneous in the annulus $B_{r_2}(x_0) \setminus \overline{B_{r_1}(x_0)}$.
\end{itemize}
\end{proposition}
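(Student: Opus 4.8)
The plan is to compute the three derivatives $\frac{d}{dr}D_t$, $\frac{d}{dr}H$, and then combine them to get $\frac{d}{dr}W_{\gamma,t}$, following the computations in \cite{soaveweth, soavesublinear} and adapting them to the present normalization of $F_{\lambda_\pm}$. First I would establish the Rellich--Pohozaev-type identity for $D_t$: testing the equation $-\Delta u = \lambda_+(u^+)^{p-1} - \lambda_-(u^-)^{q-1}$ against $x\cdot\nabla u$ over $B_r(x_0)$ (after translating $x_0$ to the origin) and integrating by parts gives
\[
\int_{B_r}|\nabla u|^2\,\de x + (n-2)\int_{B_r}|\nabla u|^2\,\de x \cdot\tfrac{1}{?}\ \ \text{(schematically)}\quad\leadsto\quad \frac{d}{dr}D_t(u,0,r) = \frac{n-2}{r}D_t + \dots,
\]
more precisely producing boundary terms $\int_{\partial B_r}(\partial_r u)^2$ and $\int_{\partial B_r} F_{\lambda_\pm}(u)$ together with the bulk terms $\int_{B_r}\lambda_+(u^+)^p$ and $\int_{B_r}\lambda_-(u^-)^q$; the different homogeneities of the two nonlinear terms are exactly what makes the coefficients $C_{n,t}-2\gamma(t-p)$ and $C_{n,t}-2\gamma(t-q)$ appear separately. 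Simultaneously, differentiating $H(u,0,r)=\int_{\partial B_r}u^2\,\de\sigma = r^{n-1}\int_{\partial B_1}u(r\theta)^2\,\de\sigma$ gives the standard identity
\[
\frac{d}{dr}H(u,0,r) = \frac{n-1}{r}H(u,0,r) + 2\int_{\partial B_r} u\,\partial_r u\,\de\sigma,
\]
and the last integral equals $\int_{B_r}|\nabla u|^2 - \int_{B_r} u\,\Delta u = \int_{B_r}|\nabla u|^2 - \int_{B_r}(\lambda_+(u^+)^p + \lambda_-(u^-)^q) = D_2(u,0,r)$ when $t=2$ — or in general $D_t$ plus a correction $\int_{B_r}(t-p)\tfrac{\lambda_+}{p}(u^+)^p + (t-q)\tfrac{\lambda_-}{q}(u^-)^q$ style terms. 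Then a direct substitution into \eqref{equation.weiss}, collecting the $\left(\partial_r u - \tfrac{\gamma}{r}u\right)^2$ square, yields the stated formula; the bookkeeping of constants is routine but must be done carefully with the present definition of $F_{\lambda_\pm}$.

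Once the derivative formula is in hand, part (i) and (ii) are immediate sign checks. Set $t=2$, so that the term $\frac{2-t}{r^{n-2+2\gamma}}\int_{\partial B_r}F_{\lambda_\pm}(u)$ vanishes and the first term $\frac{2}{r^{n-2+2\gamma}}\int_{\partial B_r}(\partial_r u - \tfrac\gamma r u)^2$ is manifestly $\geq 0$. With $C_{n,2} = 2n-2(n-2) = 4$, the two bulk coefficients become $4 - 2\gamma(2-p) = 2(2-p)(\gamma_p - \gamma)$ and $4-2\gamma(2-q) = 2(2-q)(\gamma_q-\gamma)$ (using $\gamma_p = 2/(2-p)$, $\gamma_q=2/(2-q)$). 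Since these enter $\frac{d}{dr}W_{\gamma,2}$ with a minus sign and since $\lambda_\pm\ge 0$, $(u^\pm)^{p\ \text{or}\ q}\ge 0$, the corresponding contributions are non-negative precisely when $\gamma\ge\gamma_p$ (for the $\lambda_+$ term) and $\gamma\ge\gamma_q$ (for the $\lambda_-$ term). For equation \eqref{eq0} both terms are present, so monotonicity requires $\gamma\ge\max\{\gamma_p,\gamma_q\}=\gamma_q$ since $p<q$; this proves (i). For equation \eqref{equation1}, $\lambda_-=0$, so only the $\gamma\ge\gamma_p$ condition survives, proving the monotonicity part of (ii). For the rigidity statement in (ii), if $W_{\gamma_p,2}(u,x_0,\cdot)$ is constant on $(r_1,r_2)$, then at $\gamma=\gamma_p$ all three non-negative terms in $\frac{d}{dr}W_{\gamma_p,2}$ vanish a.e.; in particular $\int_{\partial B_r}(\partial_r u - \tfrac{\gamma_p}{r}u)^2\,\de\sigma = 0$ for a.e. $r\in(r_1,r_2)$, which forces $\partial_r u = \tfrac{\gamma_p}{r}u$ on the annulus, i.e. $u$ is $\gamma_p$-homogeneous there; conversely, if $u$ is $\gamma_p$-homogeneous in the annulus then $\partial_r u = \tfrac{\gamma_p}{r}u$ makes the first term vanish, and one checks that $\gamma_p$-homogeneity is compatible with $-\Delta u = \lambda_+(u^+)^{p-1}$ only in a way that also kills the remaining bulk term (indeed $(u^+)^{p-1}$ is then $(p-1)\gamma_p = \gamma_p - 2$-homogeneous, consistent with $\Delta u$ being $(\gamma_p-2)$-homogeneous), so $W_{\gamma_p,2}$ is constant.

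The main obstacle I anticipate is not conceptual but organizational: getting the Rellich--Pohozaev identity's boundary and bulk constants exactly right with the current normalization $F_{\lambda_\pm}(u) = \tfrac{\lambda_+}{p}(u^+)^p + \tfrac{\lambda_-}{q}(u^-)^q$ (rather than the one in \cite{soavesublinear}), and tracking how the differentiation of the $r^{-(n-2+2\gamma)}$ and $r^{-(n-1+2\gamma)}$ prefactors interacts with $\frac{d}{dr}D_t$ and $\frac{d}{dr}H$ to produce precisely the coefficient $C_{n,t}-2\gamma(t-p)$ (and its $q$-analogue). Since the excerpt explicitly says this "can be easily obtained" by "proceeding exactly as in \cite[Section 2]{soaveweth} and \cite[Section 2]{soavesublinear}", I would simply carry out that adaptation, flagging the sign of $2-t$ and the algebra $C_{n,2}=4$, $4-2\gamma(2-p) = 2(2-p)(\gamma_p-\gamma)$ as the only places where care is needed, and then read off (i) and (ii) as above.
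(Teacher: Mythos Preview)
Your proposal is correct and follows essentially the same approach as the paper: the derivative formula is obtained by the Rellich--Pohozaev computation exactly as in \cite[Proposition 2.3]{soavesublinear} (the paper simply cites this), and parts (i)--(ii) are then the sign checks $C_{n,2}-2\gamma(2-p)\le 0 \iff \gamma\ge\gamma_p$ and $C_{n,2}-2\gamma(2-q)\le 0 \iff \gamma\ge\gamma_q$, with the rigidity in (ii) coming from the vanishing of the square term $\int_{\partial B_r}(\partial_r u-\tfrac{\gamma_p}{r}u)^2$ as in \cite[Corollary 2.4]{soavesublinear}. One small remark: at $\gamma=\gamma_p$ for equation \eqref{equation1} the $(u^+)^p$ bulk coefficient is exactly zero and $\lambda_-=0$, so the derivative reduces to the square term alone, making both directions of the rigidity immediate without the homogeneity-compatibility argument you sketch.
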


\begin{proof}
The expression of the derivative of $W_{\gamma,t}$ can be obtained exactly as in \cite[Proposition 2.3]{soavesublinear}. The monotonicity properties in points (i) and (ii) of the thesis follows directly, observing that
\[
C_{n,2}-2\gamma(2-p) \le 0 \quad \iff \quad \gamma \ge \gamma_p, \quad \text{and} \quad C_{n,2}-2\gamma(2-q) \le 0 \quad \iff \quad \gamma \ge \gamma_q.
\]
The case when $u$ solves \eqref{equation1} and $W_{\gamma_p,2}$ is constant in $r$ can be treated as in \cite[Corollary 2.4]{soavesublinear}.
\end{proof}

As highlighted by \cite{soavesublinear}, the monotonicity formulas associated to the parameters $\gamma\ge \gamma_p$, and $t=2$ or $t=p$, play a major role in the local analysis of solution to \eqref{eq.sublinear.ST} close to the nodal points such that $\mathcal{V}(u,x_0)\geq \gamma_p$.

For solutions to \eqref{eq0}, where $p<q$, we only have the monotonicity of $W_{\gamma,2}$ when $\gamma \ge \gamma_q>\gamma_p$. This is not helpful in studying points with vanishing order in the ``intermediate range" $[\gamma_p, \gamma_q]$. This lack of monotonicity makes the blow-up analysis for problem \eqref{eq0} more involved, and is one of the main obstructions towards a complete classification of vanishing orders for general (possibly $\gamma_p$-degenerate) solutions. In such case, at the moment we can only show that the SUCP holds.

\begin{proposition}\label{prop: UCP}
Let $u \in H^1_{\loc}(B_1)$ be a non-trivial solution of \eqref{eq0} or \eqref{equation1} in $B_1$. Then the vanishing order $\mathcal{V}(u,x_0)<+\infty$ for every $x_0 \in \{u=0\}$.
\end{proposition}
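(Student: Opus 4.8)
The plan is to reduce the finiteness of the vanishing order to the strong unique continuation principles already available for sublinear equations --- namely \cite[Theorem 1.2]{soaveweth}, the Carleman estimates of \cite{Rul}, and the analysis of \cite{soavesublinear} --- dealing separately with the few borderline situations in which these do not apply verbatim. The starting point, already contained in the proof of Proposition \ref{prop.underdog}, is that near any nodal point $x_0$ the equation collapses to a differential inequality with \emph{sublinear} right-hand side: as soon as $|u| \le 1$ on $B_\rho(x_0)$ one has, using $1 \le p < q < 2$,
\[
|\Delta u(x)| \le \lambda_+\, |u(x)|^{p-1} + \lambda_-\, |u(x)|^{q-1} \le (\lambda_+ + \lambda_-)\, |u(x)|^{p-1} \qquad \text{in } B_\rho(x_0),
\]
with $p-1 \in [0,1)$, and moreover, multiplying the equation by $u$,
\[
-\Delta u(x)\, u(x) = \lambda_+\, (u^+(x))^p + \lambda_-\, (u^-(x))^q \ge 0 \qquad \text{a.e. in } B_1 ,
\]
the inequality being strict at every point where $u(x) \ne 0$, provided $\lambda_- > 0$. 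Thus the (frozen) right-hand side obeys the sublinear growth \eqref{hp sub} and the sign condition that trigger the SUCP.

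First I would handle equation \eqref{eq0} with $\lambda_- > 0$ and $p > 1$. In this range $f(x,t) := \lambda_+(t^+)^{p-1} - \lambda_-(t^-)^{q-1}$ is a Caratheodory function --- it is continuous in $t$ precisely because $p-1 > 0$ --- it satisfies \eqref{hp sub} with $\alpha = p-1 \in (0,1)$, and the strict sign condition $0 < f(x,t)\,t$ for $0 < |t|$ small follows from the identity above; hence $\cV(u,x_0) < +\infty$ at every nodal point by \cite[Theorem 1.2]{soaveweth}, or alternatively by \cite{Rul}. For equation \eqref{equation1} (the case $\lambda_- = 0$) the sign condition is only non-strict, degenerating exactly on the open set $\{u < 0\}$, where however $-\Delta u = 0$, so this loss of strictness is harmless for a frequency-type argument; the finiteness of $\cV(u,x_0)$ for solutions of \eqref{equation1} and every $p \in [1,2)$ is already contained in \cite{soavesublinear} (where \eqref{equation1} is treated alongside \eqref{eq.sublinear.ST}, cf.\ Proposition \ref{prop.underdog}), and I would simply invoke it.

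The genuinely new case is \eqref{eq0} with $p = 1$: there $\lambda_+(u^+)^0 = \lambda_+ \chi_{\{u > 0\}}$ is discontinuous in $u$, so $f$ fails to be Caratheodory and the argument above does not apply as it stands. Here the plan is to \emph{freeze} the right-hand side: writing $\tilde f(x) := \lambda_+ \chi_{\{u(x) > 0\}} - \lambda_-(u^-(x))^{q-1}$, the function $u$ solves $-\Delta u = \tilde f$ with $\tilde f \in L^\infty_{\loc}(B_1)$, $|\tilde f(x)| \le C$ near the nodal set, $\tilde f(x) = 0$ wherever $u(x) = 0$, and $\tilde f(x)\, u(x) \ge 0$ a.e. Since the Almgren-type monotonicity formula and the Carleman estimates underlying the unique continuation results of \cite{soaveweth, Rul} use only these almost-everywhere pointwise properties of the right-hand side --- not its continuity in $t$ --- the same argument carries over and yields $\cV(u,x_0) < +\infty$; an equivalent alternative is to approximate $\chi_{\{u > 0\}}$ from below by a monotone sequence of Lipschitz functions of $u$ and to pass to the limit in the resulting uniform frequency bound.

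The step I expect to be the actual obstacle is precisely this last one, together with the non-strictness appearing when $\lambda_- = 0$: one has to make sure that the existing unique-continuation machinery tolerates a right-hand side which is merely bounded (and possibly discontinuous as a function of $u$) and whose sign tested against $u$ is only non-negative rather than strictly positive, i.e.\ that the borderline exponent $p = 1$ and the borderline coefficient $\lambda_- = 0$ do not spoil the frequency-function estimates. Once this robustness is in place, the remainder is just a matter of quoting \cite{soaveweth}, \cite{Rul} and \cite{soavesublinear}.
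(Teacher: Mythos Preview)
For the first two cases --- equation \eqref{eq0} with $p>1$ and equation \eqref{equation1} for any $p\in[1,2)$ --- your proposal and the paper's proof are identical: both simply invoke \cite{Rul} (or \cite{soaveweth}) and \cite{soavesublinear}, respectively.

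The only case requiring new work is \eqref{eq0} with $p=1$, and here your route and the paper's diverge. You propose to freeze the right-hand side and argue that the Almgren-frequency or Carleman machinery of \cite{soaveweth, Rul} survives the discontinuity of $\chi_{\{u>0\}}$ and the borderline exponent $\alpha=0$; you correctly flag this as the delicate point, and it is not obvious --- the Carleman approach of \cite{Rul} does not immediately handle a bare bound $|\Delta u|\le C$ (which alone gives no unique continuation), and whether the Almgren derivative computation in \cite{soaveweth} goes through with a non-Caratheodory $f$ would have to be checked line by line. The paper bypasses this altogether: it observes that the Weiss-type functional $W_{\gamma,2}(u,x_0,r)$, whose derivative was already computed in Proposition~\ref{weiss.mon}, is monotone non-decreasing for every $\gamma\ge\gamma_q$ (this is exactly point~(i) there, and it holds for $p=1$ without modification). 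With this monotonicity in hand, the argument of \cite[Theorem~1.2]{soavesublinear} applies verbatim and yields $\mathcal{V}(u,x_0)<+\infty$. So rather than verifying robustness of external tools at a borderline exponent, the paper uses the monotonicity formula already established in the preceding proposition --- this is shorter, self-contained, and sidesteps precisely the obstacle you anticipated.
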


\begin{proof}
If $u$ solves \eqref{eq0} with $1<p<q<2$, or \eqref{equation1} in the full range $p \in [1,2)$, then the thesis follows from \cite[Theorem 3]{Rul} or \cite[Theorem 1.2]{soavesublinear}, respectively. It remains to study the case when $u$ solves \eqref{eq0} with $p=1$, which can be treated exactly as in \cite[Theorem 1.2]{soavesublinear}, once that we note that $W_{\gamma,2}(u,x_0,r)$ is monotone non-decreasing whenever $\gamma \ge \gamma_q$, thanks to Proposition \ref{weiss.mon}.
\end{proof}

In the proof of Theorem \ref{thm.mainblow}, when dealing with solutions to \eqref{eq0}, we will exploit the following ``partial monotonicity" result.

\begin{lemma}\label{lem: ex limit}
Let $u \in H^1_{\loc}(B_1)$ be a solution of \eqref{eq0}, $x_0 \in \{u=0\}$, and suppose that $\mathcal{V}(u,x_0) \ge \gamma_p$ (defined by \eqref{gamma_p}).
\begin{enumerate}
\item If $\mathcal{V}(u,x_0) > \gamma_p$, then for every $\eps \in (0,\mathcal{V}(u,x_0)-\gamma_p)$ there exists $C>0$ such that
\[
r \mapsto W_{\gamma_p,2}(u,x_0,r) + C r^{\gamma_p(q-p)+\eps q} \quad \text{is monotone non-decreasing}.
\]
\item If $\mathcal{V}(u,x_0) = \gamma_p$, then for every $\eps \in (0,\gamma_p(1-p/q))$ there exists $C>0$ such that
\[
r \mapsto W_{\gamma_p,2}(u,x_0,r) + C r^{\gamma_p(q-p)-\eps q} \quad \text{is monotone non-decreasing}.
\]
In particular, in both cases there exists the limit
\[
W_{\gamma_p,2}(u,x_0,0^+) = \lim_{r \to 0^+} W_{\gamma_p,2}(u,x_0,r).
\]
\end{enumerate}
\end{lemma}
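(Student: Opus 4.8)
The plan is to read off $\dfrac{\de}{\de r}W_{\gamma_p,2}(u,x_0,r)$ from Proposition~\ref{weiss.mon} (with $\gamma=\gamma_p$ and $t=2$), to isolate the unique term carrying the ``wrong'' sign, and to absorb it into the derivative of a vanishing monomial $Cr^{a}$, $a>0$. Specializing that formula: since $t=2$ we have $C_{n,2}=4$ and the boundary term with coefficient $2-t$ disappears, while $C_{n,2}-2\gamma_p(2-p)=4-\tfrac{4(2-p)}{2-p}=0$ kills the $(u^+)^p$ bulk term as well; what survives is
\[
\frac{\de}{\de r}W_{\gamma_p,2}(u,x_0,r)=\frac{2}{r^{n-2+2\gamma_p}}\int_{\partial B_r(x_0)}\Bigl(\pa_r u-\frac{\gamma_p}{r}u\Bigr)^{2}\de\sigma-\frac{K\lambda_-}{q\,r^{n-1+2\gamma_p}}\int_{B_r(x_0)}(u^-)^{q}\,\de x,
\]
with $K:=C_{n,2}-2\gamma_p(2-q)=2\gamma_p(q-p)>0$ precisely because $q>p$. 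Since the first term is non-negative, the map $r\mapsto W_{\gamma_p,2}(u,x_0,r)+Cr^{a}$ is non-decreasing provided
\[
\frac{K\lambda_-}{q\,r^{n-1+2\gamma_p}}\int_{B_r(x_0)}(u^-)^{q}\,\de x\le C\,a\,r^{a-1}.
\]

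The heart of the matter is then a decay estimate for $\int_{B_r(x_0)}(u^-)^{q}$ near $x_0$. Fix any $\beta<\mathcal{V}(u,x_0)$. By Definition~\ref{def: order V} there is $M>0$ with $\int_{\pa B_\rho(x_0)}u^{2}\,\de\sigma\le M\rho^{n-1+2\beta}$ for all small $\rho$, and, after enlarging $M$ (using that $u\in C^{1,\alpha}$ is bounded on compact subsets), for all $\rho\in(0,1-|x_0|)$; integrating in $\rho$ gives $\int_{B_r(x_0)}u^{2}\le M'r^{n+2\beta}$. Hölder's inequality with exponents $2/q$ and $2/(2-q)$ then yields
\[
\int_{B_r(x_0)}(u^-)^{q}\,\de x\le\Bigl(\int_{B_r(x_0)}u^{2}\,\de x\Bigr)^{q/2}|B_r|^{1-q/2}\le C\,r^{n+\beta q},
\]
so that, using the identity $2\gamma_p=2+p\gamma_p$,
\[
\frac{1}{r^{n-1+2\gamma_p}}\int_{B_r(x_0)}(u^-)^{q}\,\de x\le C\,r^{\beta q+1-2\gamma_p}=C\,r^{\beta q-p\gamma_p-1}.
\]
(When $\mathcal{V}(u,x_0)=\gamma_p$ one may alternatively derive the same bound with $\beta=\gamma_p-\eps$ from Proposition~\ref{prop.underdog}(ii), whose alternative (i) is excluded since $\mathcal{V}(u,x_0)\ge\gamma_p>\beta_p$.)

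It remains to choose $\beta$. If $\mathcal{V}(u,x_0)>\gamma_p$ and $\eps\in(0,\mathcal{V}(u,x_0)-\gamma_p)$, take $\beta=\gamma_p+\eps<\mathcal{V}(u,x_0)$, so that $\beta q-p\gamma_p-1=\gamma_p(q-p)+\eps q-1$; then $a:=\gamma_p(q-p)+\eps q>0$ and a large enough $C$ make the displayed inequality hold, giving part (1). If $\mathcal{V}(u,x_0)=\gamma_p$ and $\eps\in(0,\gamma_p(1-p/q))$, take $\beta=\gamma_p-\eps<\gamma_p$, so that $\beta q-p\gamma_p-1=\gamma_p(q-p)-\eps q-1$; then $a:=\gamma_p(q-p)-\eps q$, which is positive exactly because $\eps<\gamma_p(1-p/q)$, and a large $C$ again works, giving part (2). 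In both cases $a>0$, hence $Cr^{a}\to0$ as $r\to0^+$; since a non-decreasing function on $(0,1-|x_0|)$ has a limit in $[-\infty,+\infty)$ as $r\to0^+$, subtracting the vanishing correction shows that $\lim_{r\to0^+}W_{\gamma_p,2}(u,x_0,r)$ exists.

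The only genuinely delicate point is the bookkeeping of exponents: the correction $Cr^{a}$ must simultaneously dominate the wrong-sign term and vanish at the origin, and the fact that this can be arranged rests on $\gamma_p(q-p)>0$ together with the sharpness of the vanishing-order decay. This is also the reason why the two regimes $\mathcal{V}(u,x_0)>\gamma_p$ and $\mathcal{V}(u,x_0)=\gamma_p$ must be handled separately, and why no such correction is available with $\eps=0$ — in accordance with the fact that, unlike in the homogeneous case $p=q$, the functional $W_{\gamma_p,2}$ need not be monotone for \eqref{eq0}.
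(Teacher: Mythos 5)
Your proof is correct and follows essentially the same route as the paper: both start from the derivative formula in Proposition~\ref{weiss.mon}, observe that with $(\gamma,t)=(\gamma_p,2)$ the only surviving sign-indefinite term is the one carrying $C_{n,2}-2\gamma_p(2-q)=2\gamma_p(q-p)$, and then absorb it by deriving the decay $\int_{B_r(x_0)}(u^-)^q\,\de x\le Cr^{n+q\beta}$ for $\beta<\mathcal V(u,x_0)$ from the vanishing-order definition, with the same two choices $\beta=\gamma_p\pm\eps$ and the same exponent bookkeeping. The only cosmetic difference is that you obtain the $L^q$ bound by first integrating $H(u,x_0,\cdot)$ over $(0,r)$ and then applying H\"older on the ball, whereas the paper applies H\"older sphere-by-sphere before integrating radially; the resulting estimate is identical.
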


\begin{proof}
Note that $C_{n,2}- 2\gamma_p(2-p) = 0$, while $C_{n,2}- 2\gamma_p(2-q) = 2\gamma_p(q-p)>0$. Thus, by Proposition \ref{weiss.mon}, we infer that
\be\label{08042}
%\begin{split}
  \frac{d}{dr}W_{\gamma_p,2}(u,x_0,r) \ge - \frac{2\gamma_p(q-p)}{q \, r^{n-1+2\gamma_p}}  \int_{B_r(x_0)} \lambda_-(u^-)^q\, \mathrm{d}x.
\ee
Now, if $\mathcal{V}(u,x_0)> \gamma_p$, let $\eps \in (0,\mathcal{V}(u,x_0)-\gamma_p)$, and let $\beta=\gamma_p+\eps$. If instead $\mathcal{V}(u,x_0)= \gamma_p$, let $\eps \in (0,\gamma_p(1-p/q))$, and let $\beta=\gamma_p-\eps$. In both cases, being $\beta<\mathcal{V}(u,x_0)$, by definition of vanishing order we have that
\[
H(u,x_0,r) \le C r^{n-1+2\beta} \qquad \forall r <r_0:= 1-|x_0|,
\]
where $C>0$ is a positive constant independent of $r$. Therefore,
\[
\begin{split}
\frac1{r^{n-1+2\gamma_p}}\int_{B_{r}(x_0)} (u^-)^q\,\mathrm{d}x & \le \frac1{r^{n-1+2\gamma_p}}\int_0^r t^{(n-1)\left(1-\frac{q}{2}\right)} H(u,x_0,t)^\frac{q}{2}\,\mathrm{d}t \\
& \le \frac{C}{r^{n-1+2\gamma_p}}\int_0^r t^{n-1+q\beta} \,\mathrm{d}t = C r^{1-2\gamma_p+q\beta},
\end{split}
\]
which, combing back to \eqref{08042}, gives
\[
\frac{d}{dr}W_{\gamma_p,2}(u,x_0,r) \ge - C r^{1-2\gamma_p+q\beta}\quad \forall r <r_0.
\]
The thesis follows since, in both the cases we are considering, we have that $2-2\gamma_p+q\beta>0$ by our choice of $\eps$ and $\beta$ (note also that $2-2\gamma_p+q\beta = \gamma_p(q-p)\pm \eps q$ if $\beta=\gamma_p\pm\eps$).
\end{proof}

\section{Existence and non-degeneracy}\label{sec: ex and non-deg}

We start this section by proving the existence of minimizers for the problem with fixed traces.

\begin{proof}[Proof of Theorem \ref{thm: ex}]
By the Poincar\'e inequality
\be\label{Poin}
\int_\Omega u^2\,\mathrm{d}x \le C_P \left(\int_{\Omega} |\nabla u|^2\,\mathrm{d}x+ \int_{\pa \Omega} u^2\,\mathrm{d}\sigma\right),
\ee
valid for every $u \in H^1(\Omega)$, we have that
\[
J(u,\Omega) \ge \frac12\int_{\Omega} |\nabla u|^2\,\mathrm{d}x - \frac{\lambda_+}{p} C \left(\int_{\Omega} |\nabla u|^2\,\mathrm{d}x \right)^\frac{p}2 -  \frac{\lambda_-}{q} C \left(\int_{\Omega} |\nabla u|^2\,\mathrm{d}x \right)^\frac{q}2-C,
\]
for some constant $C>0$ depending on the data, but independent of $u$. Since $1 \le p<q<2$, this gives at once the coercivity of $J(\cdot\,,\Omega)$, and the fact that $\inf_{\mathcal{A}} J(\cdot\,,\Omega)>-\infty$. Therefore, the existence of a minimizer follows from the direct method of the calculus of variations (the weak lower semi-continuity of the functional is a standard consequence of the Sobolev embeddings).
\end{proof}

Now we turn to the $\gamma_p$-non-degeneracy of local minimizers (recall Definition \ref{def.nondeg}).

\begin{proposition}\label{thm: non-deg}
Let $u \in H^1_{\loc}(\Omega)$ be a local minimizer of \eqref{eq0} or \eqref{equation1}. Then $u$ is $\gamma_p$-non-degenerate.
\end{proposition}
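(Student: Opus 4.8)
The goal is to show that a local minimizer $u$ satisfies $\liminf_{r\to 0^+} r^{-\gamma_p}\sup_{B_r(x_0)}|u|>0$ at every nodal point $x_0$ with $\mathcal{V}(u,x_0)\ge \gamma_p$. The plan is to argue by contradiction: suppose $u$ is $\gamma_p$-degenerate at some such $x_0$, which means there is a sequence $r_k\searrow 0$ with $\sup_{B_{r_k}(x_0)}|u| = o(r_k^{\gamma_p})$. I would then compare $u$ with a competitor which is forced to be zero on $B_{r_k}(x_0)$ and equals $u$ outside $B_{2r_k}(x_0)$, interpolating on the annulus. The key point is that, by the Euler--Lagrange characterization, a local minimizer minimizes $J(\cdot,\Omega)$ among \emph{all} admissible competitors with the same boundary data provided the $H^1$-perturbation is small; here the perturbation $v = w-u$ is supported in $B_{2r_k}(x_0)$ and has $H^1$-norm $\to 0$ (using $u\in C^{1,\alpha}$ and the degeneracy rate), so minimality applies for $k$ large.

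\smallskip

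\textbf{Key steps.} First, without loss of generality take $x_0=0$. Set $m_k := \sup_{B_{2r_k}}|u|$, which by degeneracy (and the gradient bound $|\nabla u|\le C_\eps |x|^{\gamma_p-1-\eps}$ from Proposition \ref{prop.underdog}(ii), since $\mathcal{V}(u,0)\ge\gamma_p$) satisfies $m_k = o(r_k^{\gamma_p})$ and $\sup_{B_{2r_k}}|\nabla u| = o(r_k^{\gamma_p-1})$ up to shrinking $\eps$. Second, define the competitor $w$ to coincide with $u$ outside $B_{2r_k}$, to vanish in $B_{r_k}$, and on the annulus $B_{2r_k}\setminus B_{r_k}$ to be a standard Lipschitz cutoff $w = \eta u$ with $\eta$ radial, $\eta\equiv 0$ on $\partial B_{r_k}$, $\eta\equiv 1$ on $\partial B_{2r_k}$, $|\nabla\eta|\le C/r_k$. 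Third, estimate the energy difference
\[
J(w,\Omega)-J(u,\Omega) = \int_{B_{2r_k}}\left(\tfrac12(|\nabla w|^2-|\nabla u|^2) - F_{\lambda_\pm}(w)+F_{\lambda_\pm}(u)\right)\mathrm{d}x.
\]
On $B_{r_k}$ one gains exactly $-\int_{B_{r_k}}\left(\tfrac12|\nabla u|^2 - F_{\lambda_\pm}(u)\right)$; on the annulus one loses at most $C\int_{B_{2r_k}\setminus B_{r_k}}(|\nabla u|^2 + m_k^2/r_k^2 + F_{\lambda_\pm}(u))$. Using $|\nabla u| = o(r_k^{\gamma_p-1})$, $|u|\le m_k = o(r_k^{\gamma_p})$ and $F_{\lambda_\pm}(u)\le C(|u|^p+|u|^q) = o(r_k^{\gamma_p p}) = o(r_k^{2\gamma_p-2})$ (the equality $\gamma_p p = 2\gamma_p-2$ being exactly the defining relation of $\gamma_p$), every term in the annular loss is $o(r_k^{n-2+2\gamma_p})$. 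The crucial gain comes from the $-F_{\lambda_\pm}(u)$ term on $B_{r_k}$: I would need a lower bound $\int_{B_{r_k}}F_{\lambda_\pm}(u)\,\mathrm{d}x \ge c\, r_k^{n-2+2\gamma_p}$, coming from the \emph{non-degeneracy of the source}, i.e. from the fact that $u$ cannot be too small on $B_{r_k}$ in an $L^p$-averaged sense unless it is degenerate in a way that contradicts $\mathcal{V}(u,0)\ge\gamma_p$ combined with the equation. Alternatively, and more robustly, I would bound the whole Dirichlet-type piece: by the Weiss monotonicity (Proposition \ref{weiss.mon}(ii) for \eqref{equation1}, Lemma \ref{lem: ex limit} for \eqref{eq0}), $W_{\gamma_p,2}(u,0,r_k)$ has a finite limit $W_{\gamma_p,2}(u,0,0^+)$; if this limit is $\ge 0$ one shows (as in the blow-up alternative, Theorem \ref{thm.mainblow}(i)) that $u$ is already forced to behave comparably to $r^{\gamma_p}$, contradicting degeneracy directly, while if it is $<0$ then $D_2(u,0,r_k) - \gamma_p r_k^{-1}H(u,0,r_k) \le W_{\gamma_p,2}(u,0,0^+)\,r_k^{n-2+2\gamma_p} + o(r_k^{n-2+2\gamma_p})$ is strictly negative at order $r_k^{n-2+2\gamma_p}$, and since by degeneracy $r_k^{-1}H(u,0,r_k) = o(r_k^{n-2+2\gamma_p})$, we get $\int_{B_{r_k}}(\tfrac12|\nabla u|^2 - F_{\lambda_\pm}(u)) = \tfrac12 D_2(u,0,r_k) \le \tfrac12 W_{\gamma_p,2}(u,0,0^+)r_k^{n-2+2\gamma_p} + o(\cdot) < 0$ for $k$ large. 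Hence $J(w,\Omega)-J(u,\Omega) < 0$, contradicting local minimality once we check $\|w-u\|_{H^1}\le\eps$ for large $k$ — which holds since $\|w-u\|_{H^1(B_{2r_k})}^2 \le C\int_{B_{2r_k}}(|\nabla u|^2 + m_k^2/r_k^2 + |u|^2) \to 0$.

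\smallskip

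\textbf{Main obstacle.} The delicate point is ruling out the case $W_{\gamma_p,2}(u,0,0^+)\ge 0$ under the degeneracy assumption, and more generally handling the sign of the limiting Weiss energy cleanly: if $W_{\gamma_p,2}(u,0,0^+)=0$ the leading-order competitor comparison becomes degenerate and one must extract a quantitative lower bound for $\int_{B_{r_k}}F_{\lambda_\pm}(u)$ that is genuinely of order $r_k^{n-2+2\gamma_p}$. I expect this to require the blow-up machinery of Theorem \ref{thm.mainblow}: along $r_k$, the rescalings $u_{r_k}(x) = r_k^{-\gamma_p}u(r_k x)$ — or the $H$-normalized rescalings when \eqref{case2} occurs — converge in $C^{1,\alpha}_\loc$ to a $\gamma_p$-homogeneous limit which solves \eqref{equation1} or is a harmonic polynomial; degeneracy forces this limit to be $0$, which by the equivalences in Theorem \ref{thm.mainblow}(i) is exactly the statement $W_{\gamma_p,2}(u,0,0^+)\ge 0$. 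So the real work is to show that a degenerate local minimizer with vanishing blow-up cannot exist, which is where the competitor argument must be run carefully in rescaled variables: on $u_{r_k}$ the source term $\lambda_+(u_{r_k}^+)^{p-1}$ has coefficient $\lambda_+ r_k^{2-\gamma_p(2-p)} = \lambda_+$ (again by the defining relation of $\gamma_p$ — this scale-invariance of \eqref{equation1} at rate $\gamma_p$ is what makes the argument work), so $u_{r_k}$ solves the same equation; then a fixed-scale competitor in $B_1$ perturbing $u_{r_k}$ toward $0$ lowers the rescaled energy by a fixed amount because $u_{r_k}\to 0$ makes the rescaled Dirichlet energy vanish while the rescaled $W$-energy stays $\ge 0$ only in the trivial way — this forces, for $k$ large, $J(u_{r_k} + \psi, B_1) < J(u_{r_k},B_1)$ for a suitable fixed bump $\psi$, and unscaling contradicts local minimality of $u$. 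Making this last step rigorous — in particular producing the fixed competitor $\psi$ that strictly decreases the energy of a function that is uniformly close to $0$ in $C^{1,\alpha}(B_1)$ — is the technical heart of the proof, and I would pattern it on the non-degeneracy arguments in \cite{MW} and \cite[Corollary 4.4]{AW}, adapted to general $p\in[1,2)$ where explicit barriers are unavailable.
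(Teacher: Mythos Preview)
Your two primary strategies both fail, and your closing hint is on the right track but far simpler than you make it.

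\textbf{The cutoff competitor cannot work.} If $u$ is $\gamma_p$-degenerate at $x_0$, then along $r_k$ you have $|u|=o(r_k^{\gamma_p})$ on $B_{r_k}(x_0)$, hence $\int_{B_{r_k}}F_{\lambda_\pm}(u)\le Cr_k^n\cdot o(r_k^{p\gamma_p}) = o(r_k^{n-2+2\gamma_p})$. Your hoped-for lower bound $\int_{B_{r_k}}F_{\lambda_\pm}(u)\ge c\,r_k^{n-2+2\gamma_p}$ is therefore exactly what degeneracy forbids. Replacing $u$ by $0$ on $B_{r_k}$ loses nothing and gains nothing at leading order: every term in $J(w,\Omega)-J(u,\Omega)$ is $o(r_k^{n-2+2\gamma_p})$, and no contradiction emerges.

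\textbf{The Weiss-sign dichotomy is backwards.} You claim that if $W_{\gamma_p,2}(u,x_0,0^+)\ge 0$ then ``$u$ is already forced to behave comparably to $r^{\gamma_p}$, contradicting degeneracy''. This inverts Theorem~\ref{thm.mainblow}(i): there $W_{\gamma_p,2}(u,x_0,0^+)\ge 0$ is \emph{equivalent} to degeneracy, not contradictory to it. Under your contradiction hypothesis you are precisely in the case $W\ge 0$; your ``$W<0$'' branch is vacuous, and the dichotomy yields nothing. (There is also a structural issue: the paper proves non-degeneracy \emph{before} the blow-up theorem, independently of it.)

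\textbf{What the paper actually does.} The proof, following \cite{PaWe} rather than \cite{MW} or \cite{AW}, is your last idea carried out directly, with no blow-up machinery. Fix once and for all a non-negative $w\in H^1_0(B_1)$ with $J(w,B_1)<0$; such $w$ exists simply because $p<2$. Set $w_k(x):=r_k^{\gamma_p}w((x-x_0)/r_k)$, extended by zero. Then $\|w_k\|_{H^1}\to 0$, so $u+w_k$ is an admissible perturbation for large $k$, and the scaling gives $J(w_k,B_{r_k}(x_0)) = r_k^{n+p\gamma_p}J(w,B_1)$, a term of fixed negative sign at order $r_k^{n+p\gamma_p}=r_k^{n-2+2\gamma_p}$. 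Expanding $J(u+w_k,B_{r_k}(x_0))$ and using convexity of $t\mapsto (t^+)^p/p$ together with non-negativity of $(t^-)^q/q$, every cross term is $o(r_k^{n+p\gamma_p})$ precisely because $\sup_{B_{r_k}}|u|=o(r_k^{\gamma_p})$. Hence
\[
J(u,B_{r_k}(x_0)) \le J(u+w_k,B_{r_k}(x_0)) \le J(u,B_{r_k}(x_0)) + r_k^{n+p\gamma_p}J(w,B_1) + o(r_k^{n+p\gamma_p}),
\]
a contradiction for $k$ large. The key point you missed is that the competitor should be $u+w_k$ with a \emph{fixed} profile $w$ of negative energy, not a cutoff of $u$ toward zero: the strict energy gain comes from the bump, not from $u$.
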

\begin{proof}
The proof is inspired by \cite[Proposition 3.3]{PaWe}. Suppose by contradiction that there exists $x_0 \in \{u=0\}$ and $r_k \searrow 0^+$ such that
  \be\label{deg.ass}
  \lim_{k\to \infty} \frac{1}{r_k^{\gamma_p}}\sup_{B_{r_k}(x_0)}|u| =0.
\ee
Since $p<2$, there exists a non-negative function $w \in H^1_0(B_1)$ such that $J(w,B_1)< 0$. For every $k$, we define
  $$
  w_k(x)=\begin{cases}
  r_k^{\gamma_p}w\left(\frac{x-x_0}{r_k}\right) & \mbox{in }B_{r_k}(x_0)\\
  0 & \mbox{in }B_1\setminus B_{r_k}(x_0).
  \end{cases}
  $$
  It is plain that $\|w_k\|_{H^1(\Omega)} \to 0$ as $k \to \infty$. Therefore, for sufficiently large $k$ the function $u+w_k$ is an admissible competitor in the definition of local minimality, whence it follows that
  \be\label{loca.min}
  J(u,B_{r_k}(x_0))\leq J(u+w_k,B_{r_k}(x_0)).
  \ee
  Now,
  \begin{align*}
  J(u+w_k,B_{r_k}(x_0)) =&\, J(u,B_{r_k}(x_0)) + J(w_k,B_{r_k}(x_0))\\
  &+\int_{B_{r_k}(x_0)}\nabla u\cdot \nabla w_k \, \mathrm{d}x +\lambda_-\int_{B_{r_k}(x_0)}\big( g(u) - g(u+w_k) \big) \, \mathrm{d}x\\
  &   -\lambda_+\int_{B_{r_k}(x_0)}\big( f(u+w_k)-f(u)-f(w_k) \big) \, \mathrm{d}x,
  \end{align*}
  where $f(t):=(t^+)^p/p$ and $g(t) := (t^-)^q/q$. By the convexity of $f$, we have that
  $$
  \int_{B_{r_k}(x_0)} \big( f(u+w_k)-f(w_k)\big) \, \mathrm{d}x \geq \int_{B_{r_k}(x_0)} (w_k^+)^{p-1} u \, \mathrm{d}x
  $$
(for $p=1$, this inequality follows directly from the definition of positive part) and hence, using also the positivity of $g$, we deduce that
  \be\label{07041}
  \begin{split}
J(u+w_k,B_{r_k}(x_0)) \le &\, J(u,B_{r_k}(x_0)) + J(w_k,B_{r_k}(x_0))\\
  &+\int_{B_{r_k}(x_0)}\nabla u\cdot \nabla w_k \, \mathrm{d}x +\frac{\lambda_-}{q}\int_{B_{r_k}(x_0)}(u^-)^q\,\mathrm{d}x \\
  & + \frac{\lambda_+}{p}\int_{B_{r_k}(x_0)}\left((u^+)^{p}-(w_k^+)^{p-1} u\right) \, \mathrm{d}x.
  \end{split}
  \ee
At this point we observe that $J(w,B_{r_k}(x_0)) = r_k^{n+p\gamma_p}J(w,B_1)$, by definition of $w_k$, while, by using the degeneracy assumption \eqref{deg.ass}, we have that
\begin{align*}
  \left|\int_{B_{r_k}(x_0)}\nabla u\cdot \nabla w_k \, \mathrm{d}x \right| &\leq \int_{B_{r_k}(x_0)}  \left| \lambda_+ (u^+)^{p-1} w_k - \lambda_-(u^-)^{q-1}w_k \right|\,\mathrm{d}x\\
  & \le C \left( \sup_{B_{r_k}(x_0)} |u|^{p-1} + \sup_{B_{r_k}(x_0)} |u|^{q-1}\right) r_k^{n+\gamma_p} \int_{B_1}|w|\, \mathrm{d}x \\
  & = C \left( o(r_k^{\gamma_p(p-1)})  + o(r_k^{\gamma_p(q-1)})  \right) r_k^{n + \gamma_p} = o(r_k^{n+ \gamma_p p})
\end{align*}
as $k \to \infty$, where we used the fact that $p<q$. Similarly,
\[
\int_{B_{r_k}(x_0)} (u^-)^q \,\mathrm{d}x \le o(r_k^{q\gamma_p}) r_k^n = o (r_k^{n+ p\gamma_p}),
\]
and
  $$
    \left|\int_{B_{r_k}(x_0)}\left((u^+)^{p}-(w_k^+)^{p-1} u\right)\,\mathrm{d}x \right| \le o(r_k^{p\gamma_p}) r_k^n + o(r_k^{\gamma_p})\int_{B_{r_k}(x_0)}(w_k^+)^{p-1}\mathrm{d}x = o(r_k^{n+p\gamma_p}),
  $$
  as $k \to \infty$.
  Finally, by collecting the previous estimates in \eqref{07041}, coming back to \eqref{loca.min}, and recalling that $J(w,B_1)<0$, we infer that
  $$
  J(u, B_{r_k}(x_0)) \leq J(u, B_{r_k}(x_0)) + r_k^{n+\gamma_p p} J(w,B_1) + o(r_k^{n + \gamma_p p})
  $$
as $k \to \infty$, which gives a contradiction for $k$ sufficiently large.
\end{proof}

\section{Blow-up analysis}\label{section.blowup}
In this Section we develop the blow-up analysis of the nodal set close to those points of $\{u=0\}$ such that $\mathcal{V}(u,x_0)\geq \gamma_p$. For the sake of clarity, we split the proof in two different subsection, each of one treating a different case in Theorem \ref{thm.mainblow}. %Before, we introduce the main notations and some preliminary results.

Before proceeding with the proof of Theorem \ref{thm.mainblow}, we address the issue of the upper semi-continuity of the vanishing order with respect to the point $x_0$. This will be particularly useful in the second case of the theorem.

\begin{lemma}\label{lem: upper sc}
Let $1 \le p<q <2$, $\lambda_{1,k}>0$, $\lambda_{2,k} \ge 0$, with $\lambda_{i,k} \to \lambda_i \ge 0$ as $k \to \infty$, for $i=1,2$. Let $v_k \in H^1_{\loc}(B_3)$ be such that
\begin{equation}\label{sc.eq}
-\Delta v_k = \lambda_{1,k} (v_k^+)^{p-1} - \lambda_{2,k} (v_k^-)^{q-1} \quad \text{in $B_3$}.
\ee
Suppose that $x_k \in Z(v_k) \cap B_1$ with $\mathcal{V}(v_k,x_k) \ge \gamma_p$ for every $k$, and that $x_k \to \xi$ and $v_k \rightharpoonup \varphi$ weakly in $H^1_{\loc}(B_3)$, as $k \to +\infty$. Then
\[
\mathcal{V}(\varphi, \xi) \ge \gamma_p.
\]
\end{lemma}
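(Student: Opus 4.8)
The plan is to argue by contradiction: suppose $\mathcal{V}(\varphi,\xi) < \gamma_p$. First I would record the basic compactness facts. By elliptic estimates (the right-hand sides in \eqref{sc.eq} are uniformly bounded in $L^\infty_{\loc}$ once we know the $v_k$ are uniformly bounded, which follows from the weak $H^1_{\loc}$ convergence together with the standard $C^{1,\alpha}_{\loc}$ bounds for these sublinear equations), the convergence $v_k \to \varphi$ actually takes place in $C^{1,\alpha}_{\loc}(B_3)$ for every $\alpha\in(0,1)$, and $\varphi$ solves the limiting equation $-\Delta\varphi = \lambda_1(\varphi^+)^{p-1} - \lambda_2(\varphi^-)^{q-1}$; moreover $\xi \in \{\varphi = 0\}$ since $v_k(x_k)=0$, $x_k\to\xi$. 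Then I would invoke Proposition \ref{prop.underdog} applied to $\varphi$ at $\xi$: the assumption $\mathcal{V}(\varphi,\xi) < \gamma_p$ places us in alternative (i), so $\mathcal{V}(\varphi,\xi) = m \in \{k\in\N\setminus\{0\}: k\le\beta_p\}$, and there is a non-trivial homogeneous harmonic polynomial $P$ of degree $m$ and a remainder $\Gamma$ with $|\Gamma(x)| \le C|x-\xi|^{m+\delta}$, $|\nabla\Gamma(x)|\le C|x-\xi|^{m-1+\delta}$ on some fixed ball $B_\rho(\xi)$.

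The key point is then a quantitative non-degeneracy statement that is \emph{stable} under the $C^{1,\alpha}_{\loc}$-perturbation $v_k \to \varphi$. From the expansion $\varphi(x) = P(x-\xi) + \Gamma(x)$ one gets, for a suitable small $r_0>0$,
\[
\frac{1}{r_0^{n-1+2m}}\int_{\partial B_{r_0}(\xi)}\varphi^2\,\de\sigma \;\ge\; 2c_0 > 0,
\]
where $c_0$ depends on $\|P\|$ and on $\delta, C, n, m$. Here $r_0$ must be chosen small enough that the $\Gamma$-term is absorbed (it is lower order, $O(r_0^{2\delta})$ relative to the $P$-term). Since $x_k\to\xi$ and $v_k\to\varphi$ in $C^0(\overline{B_{2r_0}(\xi)})$, for $k$ large we have $B_{r_0}(x_k)\subset B_{2r_0}(\xi)$ and
\[
\frac{1}{r_0^{n-1+2m}}\int_{\partial B_{r_0}(x_k)}v_k^2\,\de\sigma \;\ge\; c_0 > 0
\]
— this is just continuity of the surface integral under uniform convergence and under the (uniformly small) shift of centers. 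But then, by the definition of vanishing order applied to $v_k$ at $x_k$ with the test exponent $\beta = m$: the quantity $r^{-(n-1+2\beta)}H(v_k,x_k,r)$ does \emph{not} tend to $0$ as $r\to 0^+$ with $\beta$ slightly above $m$... more precisely I need the sharper consequence that $\mathcal{V}(v_k,x_k) \le m$, which follows because if $\mathcal{V}(v_k,x_k) > m$ then $r^{-(n-1+2m')}H(v_k,x_k,r)\to 0$ for any $m'\in(m,\mathcal V(v_k,x_k))$, and a Harnack/doubling-type argument — or more simply the monotonicity-formula-based doubling inequality available for these equations — forces $r^{-(n-1+2m)}H(v_k,x_k,r)$ itself to be bounded away from a positive constant only if the order is exactly $\le m$; I will instead argue directly that $r_0^{-(n-1+2m)}H(v_k,x_k,r_0)\ge c_0$ is incompatible with $\mathcal V(v_k,x_k)\ge\gamma_p>m$, because for such $v_k$ Proposition \ref{prop.underdog}(ii) (alternative (ii), valid since $\mathcal V(v_k,x_k)\ge\gamma_p$) gives $|v_k(x)|\le C_\eps |x-x_k|^{\gamma_p-\eps}$ \emph{but with a constant $C_\eps$ depending on $v_k$}, which is the catch.

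The main obstacle is precisely making the bound in alternative (ii) of Proposition \ref{prop.underdog} \emph{uniform in $k$}, or equivalently avoiding it: the clean way is to not use (ii) at all, but rather to reach the contradiction at the level of the common exponent $m$. Concretely: the inequality $r_0^{-(n-1+2m)}H(v_k,x_k,r_0)\ge c_0$ together with $\mathcal V(v_k,x_k)\ge\gamma_p$ and alternative (i) of the Proposition applied to $v_k$ at $x_k$ is impossible, because alternative (i) for $v_k$ would force $\mathcal V(v_k,x_k) = $ some integer $\le\beta_p$ with an associated harmonic polynomial of \emph{that} degree — but we are in the regime $\mathcal V(v_k,x_k)\ge\gamma_p$, so $v_k$ at $x_k$ is in alternative (ii). So the real argument is: apply (ii) to $v_k$ with $\eps$ fixed so small that $\gamma_p - \eps > m$; this gives $H(v_k,x_k,r) \le C_{\eps,k}\, r^{n-1+2(\gamma_p-\eps)}$, hence $r_0^{-(n-1+2m)}H(v_k,x_k,r_0) \le C_{\eps,k}\, r_0^{2(\gamma_p-\eps-m)}$. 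If I can show $\limsup_k C_{\eps,k} < \infty$, then choosing $r_0$ small (done \emph{before} passing to the limit in $k$) makes the right side $< c_0$, contradiction. Uniformity of $C_{\eps,k}$ is exactly the uniform-in-$k$ decay estimate, which one proves by re-running the proof of Proposition \ref{prop.underdog}: the constants there depend only on $n,p,q,\lambda_\pm$, on $\eps$, on the radius, and on $\|v_k\|_{L^\infty(B_2)}$ and $\mathcal V(v_k,x_k)\ge\gamma_p$ — all uniformly controlled along the sequence. Alternatively, and perhaps more cleanly, one inspects the proof in \cite{soavesublinear} to see that the estimate in alternative (ii) holds with a constant depending only on $\sup_{B_{r}(x_0)}|u|$, which here is uniformly bounded by $C^{1,\alpha}_{\loc}$ convergence; I would phrase the write-up that way, flagging that the constant in Proposition \ref{prop.underdog}(ii) is stable under locally uniform convergence of the solutions.
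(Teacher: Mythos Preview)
Your proposal is correct in substance and lands on exactly the same key step as the paper: the uniform-in-$k$ pointwise decay $|v_k(x)|\le C_\eps|x-x_k|^{\gamma_p-\eps}$, obtained by re-running the Caffarelli--Friedman iteration behind Proposition~\ref{prop.underdog} with constants tracked uniformly along the sequence (they depend only on $n,p,q$, bounds on $\lambda_{i,k}$, and $\|v_k\|_{C^1}$ on a fixed ball, all of which are controlled here). The paper carries this out directly and then simply passes to the limit in $k$ to obtain $|\varphi(x)|\le C_\eps|x-\xi|^{\gamma_p-\eps}$, whence $\mathcal V(\varphi,\xi)\ge\gamma_p$; your contradiction framework---expanding $\varphi$ via Proposition~\ref{prop.underdog}(i), transferring the resulting lower bound on $H$ to the $v_k$, and then contradicting the uniform upper bound---is valid but superfluous once the uniform decay is in hand. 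One small point you should patch: you invoke Proposition~\ref{prop.underdog} for the limit $\varphi$, but that proposition is stated for solutions of \eqref{eq0} or \eqref{equation1} with $\lambda_+>0$, whereas the limiting coefficients may satisfy $\lambda_1=0$ (or $\lambda_1=\lambda_2=0$, in which case $\varphi$ is harmonic); the paper avoids this issue entirely by never applying Proposition~\ref{prop.underdog} to $\varphi$, and treats the degenerate-coefficient case explicitly when $p=1$.
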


\begin{proof}
This result can be proved by adapting the proof of \cite[Proposition 5.1]{soavesublinear} in the present setting (this requires minor changes). However, we present a much simpler proof here (valid also for solutions to \eqref{eq.sublinear.ST}).

\medskip

Suppose at first that $p=1$, when $\gamma_p = 2$. By weak $H^1_{\loc}$-convergence of $\{v_k\}$ and elliptic regularity for equation \eqref{sc.eq}, we have that $v_k \to \varphi$ in $C^{1,\alpha}(B_3)$, for every $\alpha \in (0,1)$. In particular, since $v_k(x_k) = 0 = |\nabla v_k(x_k)|$ (since $\mathcal{V}(v_k,x_k) \ge 2$), this implies that also $\varphi(\xi) = 0 = |\nabla \varphi(\xi)|$.

Now, if at least one between $\lambda_1$ and $\lambda_2$ are positive, Proposition \ref{prop.underdog} implies that $\mathcal{V}(\varphi,\xi) \in \{1\} \cup [2,+\infty)$. However, since both $\varphi$ and its gradient vanish in $\xi$, the case $\mathcal{V}(\varphi,\xi) =1$ is not possible, and hence the thesis follows.

If instead $\lambda_1=0=\lambda_2$, then $\varphi$ is a harmonic function vanishing in $\xi$ together with its gradient. Since the admissible vanishing orders of harmonic functions are positive integers, this directly implies that $\mathcal{V}(\varphi, \xi) \ge 2$, as desired.

\medskip

The case $p>1$ is more involved. Recall from \cite[Lemma 2.2]{soavesublinear} that, for any $k \in \N$, it is possible to choose $\delta_k \in \left[0,\frac{1}{2^k}\right)$ such that the sequence
\begin{equation}\label{def beta k delta}
\begin{cases}
\beta_{1}:= (p+1) \\
\beta_{k}:= (p-1) \beta_{k-1} + 2 -\delta_k
\end{cases}
\end{equation}
satisfies
\begin{equation}\label{lim beta_k}
\beta_{k} \not \in \N \quad \text{for every $k$, and} \quad \beta_{k} \nearrow \gamma_p \quad \text{as $k \to \infty$}.
\end{equation}
Observe also that by weak $H^1_{\loc}$-convergence of $\{v_k\}$ and elliptic regularity for equation \eqref{sc.eq}, we have that $v_k \to \varphi$ in $C^{2,\alpha}(B_3)$, for some $\alpha \in (0,1)$. Now the idea is to use the same argument in \cite[Proposition 2.1]{soavesublinear} uniformly along the sequence $\{v_k\}$.

Since $\mathcal{V}(v_k,x_k) \ge \gamma_p>1$, the sequences $\{\lambda_{i,k}\}$ ($i=1,2$) are bounded, and $\{v_k\}$ is bounded in $C^{2,\alpha}(B_{2})$, there exists $C>0$ independent of $k$ such that
\[
\begin{split}
|\Delta v_k(x)| &\le \sup\{\lambda_{1,k}\} |v_k(x)|^{p-1} +\sup\{\lambda_{2,k}\} |v_k(x)|^{q-1} \\
& \le \sup\{\lambda_{1,k}\}  C  |x-x_k|^{p-1} +\sup\{\lambda_{2,k}\} C |x-x_k|^{q-1} \\
& \le \max\left\{ C, 2^{p-1}\right\} |x-x_k|^{p-1}
\end{split}
\]
for every $x \in B_1(x_k)$ (where we used the fact that $p<q$). Thus, by \cite[Lemma 1.1]{CafFri85} there exist harmonic polynomials $P_{1,k}$ of degree $\lfloor p-1 \rfloor+2 =2$, and functions $\Gamma_{1,k}$ such that
\[
v_k(x) = P_{1,k}(x-x_k) + \Gamma_{1,k}(x) \qquad \text{in $B_1(x_k)$},
\]
with
\[
|\Gamma_{1,k}(x) | \le C_1  |x-x_k|^{p+1} \quad \text{in $B_1(x_k)$},
\]
for a positive constant $C_1>0$ independent of $k$ (indeed, $C_1$ depends only on upper bounds on $p$, $\|v_k\|_{W^{1,\infty}(\partial B_1(x_k))}$, $\lambda_{i,k}$ and $n$, and these quantities are all uniformly estimated in $k$). Now, the case $P_{1,k} \not \equiv 0$ is not possible, since it would give $\mathcal{V}(v_k, x_k) \le 2<\gamma_p$. Then $v_k = \Gamma_{1,k}$ for every $k$, and hence by the above estimates
\[
\begin{split}
|\Delta v_k(x)| &\le \sup\{\lambda_{1,k}\} |v_k(x)|^{p-1} +\sup\{\lambda_{2,k}\} |v_k(x)|^{q-1} \\
& \le \sup\{\lambda_{1,k}\}  C_1  |x-x_k|^{(p+1)(p-1)} +\sup\{\lambda_{2,k}\} C_1 |x-x_k|^{(p+1)(q-1)} \\
& \le \max\left\{ C, 2^{(p-1)(p+1)-\delta_2}\right\} |x-x_k|^{(p-1)(p+1)-\delta_2}
\end{split}
\]
for every $x \in B_1(x_k)$, with $(p+1)(p-1) -\delta_2 = \beta_2-2 \not \in \N$. As a consequence, we can apply again \cite[Lemma 1.1]{CafFri85}: letting
\[
\alpha_2:= \lfloor (p-1)(p+1) - \delta_2 \rfloor+2,
\]
there exist harmonic polynomials $P_{2,k}$ of degree $\alpha_2$, functions $\Gamma_{2,k}$, and a constant $C_2>0$ independent of $k$ such that
\[
v_k(x) = P_{2,k}(x-x_k) + \Gamma_{2,k}(x) \qquad \text{in $B_1(x_k)$},
\]
and
\[
|\Gamma_{2,k}(x) | \le C_2  |x-x_k|^{\beta_2} \quad \text{in $B_1(x_k)$}.
\]
The case $P_{2,k} \not \equiv 0$ is possible only if $\beta_2 \ge \gamma_p$. If not, then $v_k=\Gamma_{2,k}$ and we can iterate the previous argument in the following way: for any $k \ge 3$ such that $P_{m-1,k} \equiv 0$, we let
\begin{equation}\label{def beta_k}
\alpha_m:= \lfloor (p-1)\beta_{m-1} - \delta_m \rfloor+2;
\end{equation}
then there exist harmonic polynomials $P_{m,k}$ of degree $\alpha_m$, functions $\Gamma_{m,k}$, and constants $C_m>0$ independent of $k$ such that
\[
v_k(x) = P_{m,k}(x-x_k) + \Gamma_{m,k}(x) \qquad \text{in $B_1(x_k)$},
\]
and
\[
|\Gamma_{m,k}(x) | \le C_m  |x-x_k|^{\beta_m} \quad \text{in $B_1(x_k)$}.
\]
Since $\beta_m \nearrow \gamma_p$, and $\mathcal{V}(v_k,x_k) \ge \gamma_p$, we deduce that for any fixed $\eps$ there exists $\bar m \in \N$ with $\gamma_p -\eps \le \beta_{\bar m}< \gamma_p$, and
\[
|v_k(x)| = |\Gamma_{\bar m,k}(x)|  \le C_{\bar m} |x-x_k|^{\gamma_p-\eps} \quad \text{in $B_1(x_k)$}
\]
By taking the limit as $k \to +\infty$, by local uniform convergence we deduce that
\[
|\varphi(x)| \le C_{\bar m} |x-\xi|^{\gamma_p-\eps} \quad \text{in $B_{1/2}(\xi)$}.
\]
Since $\eps>0$ can be arbitrarily chosen, this implies that $\mathcal{V}(\varphi, \xi) \ge \gamma_p$, directly from the definition of vanishing order.
\end{proof}

\subsection{Proof of Theorem \ref{thm.mainblow}-(i)}
In this subsection, we focus on the first alternative in  Theorem \ref{thm.mainblow}. Namely, we suppose that \eqref{hp blow-up reg} holds, and we prove the thesis in point (i) of the theorem.

At first, we introduce some notation. We define the (natural) blow-up family centered at $x_0$ as the family of scaled functions
    \be \label{blow.up}
    u_{x_0,r}(x)= \frac{1}{r^{\gamma_p}}u(x_0 + r x)\quad \mbox{for } x\in B_{x_0,r}=\frac{B_1 - x_0}{r},
    \ee
where $\gamma_p$ is defined in \eqref{gamma_p}; note that
\be\label{blow.up.equation}
-\Delta u_{x_0,r} = \lambda_+(u_{x_0,r}^+)^{p-1}- r^{(q-p)\gamma_p} \lambda_-(u_{x_0,r}^-)^{q-1}\quad\mbox{in }B_{x_0,r},
\ee
where we used the fact that $2-(2-q)\gamma_p=(q-p)\gamma_p$. Note that $r^{(q-p)\gamma_p} \to 0$ as $r \to 0^+$, since $p<q$.

In this setting, recalling also Remark \ref{rem: su W scaled}, the following identity holds for the Weiss-type functionals:
\be\label{blow.up.weiss}
\begin{split}
W_{\gamma_p,2}(u_{x_0,r},0,\rho)  = & \frac{1}{\rho^{n-2+2\gamma_p}} \int_{B_\rho} \left(|\nabla u_{x_0,r}|^2 - 2 F_{\lambda_+, r^{(q-p)\gamma_p} \lambda_-}(u_{x_0,r})\right)\,\mathrm{d}x \\
& - \frac{\gamma_p}{\rho^{n-1+2\gamma_p}} \int_{\pa B_\rho} u_{x_0,r}^2\,\mathrm{d}\sigma \\
= &  W_{\gamma_p,2}(u,x_0,\rho r).
 \end{split}
\ee

In Lemma \ref{lem: ex limit} (see also the discussion preceding it), we noticed that for solutions of both \eqref{eq0} and \eqref{equation1} there exists the limit $W_{\gamma_p,2}(u,x_0,0^+)$ at points $x_0$ with vanishing order $\mathcal{V}(u,x_0) \ge \gamma_p$. In the next lemma, we show that this limit can be $-\infty$ only if we are in case (ii) of Theorem \ref{thm.mainblow}.

\begin{lemma}\label{lem.1}
  Let $x_0 \in \{u=0\}$ be such that $\mathcal{V}(u,x_0)\geq \gamma_p$. If $W_{\gamma_p,2}(u,x_0,0^+)=-\infty$, then necessarily \eqref{case2} holds, namely
  \[
  \limsup_{r \to 0^+} \frac{H(u,x_0,r)}{r^{n-1+2\gamma_p}}=+\infty.
  \]
\end{lemma}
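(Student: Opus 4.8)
The plan is to argue by contraposition: assuming that \eqref{case2} fails, that is
\[
\limsup_{r \to 0^+} \frac{H(u,x_0,r)}{r^{n-1+2\gamma_p}} < +\infty,
\]
I will show that $W_{\gamma_p,2}(u,x_0,0^+) > -\infty$. Under this assumption the boundary term $-\gamma_p H(u,x_0,r)/r^{n-1+2\gamma_p}$ appearing in $W_{\gamma_p,2}$ is bounded from below as $r \to 0^+$, so it suffices to bound from below the Dirichlet-type part $D_2(u,x_0,r)/r^{n-2+2\gamma_p}$, where $D_2(u,x_0,r) = \int_{B_r(x_0)}(|\nabla u|^2 - 2F_{\lambda_\pm}(u))\,\mathrm{d}x$. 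Equivalently, by definition of $W_{\gamma_p,2}$ one just needs $W_{\gamma_p,2}(u,x_0,r) \ge -C$ for all small $r$; since the limit exists (by Lemma \ref{lem: ex limit} when $u$ solves \eqref{eq0}, and by the monotonicity in Proposition \ref{weiss.mon}(ii) when $u$ solves \eqref{equation1}), a uniform lower bound along a sequence is enough.

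The key ingredient is the decay estimate from Proposition \ref{prop.underdog}. The hypothesis $\limsup H(u,x_0,r)/r^{n-1+2\gamma_p} < +\infty$ means exactly that $\mathcal{V}(u,x_0) \ge \gamma_p$ and we are \emph{not} in the degenerate sub-case where the boundary integral blows up; in particular alternative (i) of Proposition \ref{prop.underdog} (with an integer vanishing order $\le \beta_p$) would contradict the assumed integrability, so we are in alternative (ii), giving for every $\eps > 0$ a constant $C_\eps$ with $|u(x)| \le C_\eps |x-x_0|^{\gamma_p - \eps}$ and $|\nabla u(x)| \le C_\eps |x-x_0|^{\gamma_p - 1 - \eps}$ in $B_r(x_0)$. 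From these pointwise bounds I estimate each piece of $D_2$: the gradient term contributes
\[
\int_{B_r(x_0)} |\nabla u|^2\,\mathrm{d}x \le C_\eps^2 \int_{B_r(x_0)} |x-x_0|^{2\gamma_p - 2 - 2\eps}\,\mathrm{d}x = C\, r^{n - 2 + 2\gamma_p - 2\eps},
\]
which after dividing by $r^{n-2+2\gamma_p}$ gives $O(r^{-2\eps})$; choosing $\eps$ small (any fixed $\eps \in (0,\gamma_p)$ works, and one can even use the finer control), this tends to $0$, hence is bounded. The potential term $\int_{B_r(x_0)} F_{\lambda_\pm}(u)\,\mathrm{d}x$ is controlled by $C\int_{B_r(x_0)} |u|^p\,\mathrm{d}x \le C_\eps^p \int_{B_r(x_0)} |x-x_0|^{p(\gamma_p-\eps)}\,\mathrm{d}x = C\, r^{n + p\gamma_p - p\eps}$ (using $p < q$ so the $(u^-)^q$ part is of even higher order on the unit scale), and dividing by $r^{n-2+2\gamma_p}$ leaves $r^{p\gamma_p + 2 - 2\gamma_p - p\eps} = r^{\gamma_p(p-2) + 2 - p\eps} = r^{0 - p\eps}$ since $\gamma_p(2-p) = 2$; wait — this exponent is $2 - 2\gamma_p + p\gamma_p - p\eps = -p\eps$, so again it is $O(r^{-p\eps})$, bounded for small $\eps$. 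Actually, more simply, since $\gamma_p(2-p)=2$ one checks directly that $n + p\gamma_p - p\eps - (n-2+2\gamma_p) = 2 - (2-p)\gamma_p - p\eps = -p\eps$; in any case all exponents are $\ge -C\eps$ with $\eps$ free to be taken as small as we like, so everything stays bounded as $r \to 0^+$.

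Combining, $D_2(u,x_0,r)/r^{n-2+2\gamma_p}$ is bounded as $r \to 0^+$, and the boundary term is bounded below by hypothesis, so $W_{\gamma_p,2}(u,x_0,r)$ is bounded from below; passing to the limit, $W_{\gamma_p,2}(u,x_0,0^+) > -\infty$, which is the contrapositive of the claim. The only mild subtlety — the ``main obstacle'' — is justifying that the hypothesis of the lemma forces us into alternative (ii) of Proposition \ref{prop.underdog} rather than (i): if $\mathcal{V}(u,x_0) = k \le \beta_p < \gamma_p$ were an integer, then $u$ would look like a nonzero homogeneous harmonic polynomial of degree $k$ near $x_0$, giving $H(u,x_0,r) \sim c\, r^{n-1+2k}$ with $n-1+2k < n-1+2\gamma_p$, whence $H(u,x_0,r)/r^{n-1+2\gamma_p} \to +\infty$, contradicting the standing assumption; hence $\mathcal{V}(u,x_0) \ge \gamma_p$ and the decay estimates of Proposition \ref{prop.underdog}(ii) apply. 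Everything else is a routine integration of the pointwise decay bounds together with the identity $\gamma_p(2-p) = 2$.
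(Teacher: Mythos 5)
Your argument has a genuine gap, and it occurs precisely where you compute the normalized integrals. After integrating the pointwise bound $|u(x)|\le C_\eps|x-x_0|^{\gamma_p-\eps}$ you correctly obtain
\[
\frac{1}{r^{n-2+2\gamma_p}}\int_{B_r(x_0)}|\nabla u|^2\,\mathrm{d}x \lesssim r^{-2\eps},
\qquad
\frac{1}{r^{n-2+2\gamma_p}}\int_{B_r(x_0)}(u^+)^p\,\mathrm{d}x \lesssim r^{-p\eps},
\]
but you then assert that these quantities ``tend to $0$, hence are bounded.'' That is false: for any \emph{fixed} $\eps>0$, $r^{-2\eps}\to+\infty$ and $r^{-p\eps}\to+\infty$ as $r\to0^+$. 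You cannot rescue this by sending $\eps\to0$ simultaneously, because the constant $C_\eps$ coming from Proposition \ref{prop.underdog}(ii) depends on $\eps$ and is not uniformly controlled. The $\eps$-loss in the pointwise decay estimate is an unavoidable feature of the statement, and it means these estimates are structurally too weak to produce a uniform lower bound on $W_{\gamma_p,2}$ along $r\to0^+$. In fact, the paper's proof shows the situation is exactly the opposite of what you claim: under the contradiction hypothesis (both $W_{\gamma_p,2}(u,x_0,0^+)=-\infty$ and the boundary term bounded) the normalized potential term $\int_{B_1}(u^+_{x_0,r})^p\,\mathrm{d}x$ genuinely \emph{does} blow up, since with $\int_{B_1}|\nabla u_{x_0,r}|^2\ge0$ and the boundary term bounded, this is the only way $W_{\gamma_p,2}$ can diverge to $-\infty$.

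The paper's actual mechanism is completely different and does not use Proposition \ref{prop.underdog} at all. It works directly with the scaled family $u_{x_0,r}$: first it shows that the $(u^-)^q$ contribution vanishes (using the decay on $H$ and the sublinear exponent $q$), so the blow-up of $W_{\gamma_p,2}$ forces $\int_{B_1}(u^+_{x_0,r})^p\,\mathrm{d}x\to+\infty$ along a sequence; then, factoring this diverging quantity out of $W_{\gamma_p,2}$, it deduces that the ratio $\int_{B_1}|\nabla u_{x_0,r}|^2\,\mathrm{d}x\,/\int_{B_1}(u^+_{x_0,r})^p\,\mathrm{d}x$ stays bounded; finally, introducing the normalization $v_r=u_{x_0,r}/\|u^+_{x_0,r}\|_{L^p(B_1)}$ and using the Poincar\'e inequality \eqref{Poin} together with $p<2$, it shows $\|v_r\|_{H^1(B_1)}\to0$, which contradicts $\|v_r^+\|_{L^p(B_1)}=1$. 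The essential input is the subquadratic growth of the potential ($p<2$), which makes the $L^p$-normalization incompatible with vanishing $H^1$-norm — an idea that your argument does not capture. One further minor remark: the paragraph in which you rule out alternative (i) of Proposition \ref{prop.underdog} is unnecessary, since $\mathcal{V}(u,x_0)\ge\gamma_p$ is already a hypothesis of the lemma.
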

\begin{proof}
In terms of the scaled family $u_{x_0,r}$ defined in \eqref{blow.up}, we can rephrase \eqref{case2} as follows: there exists $r_k \to 0^+$ such that
  $$
  \lim_{k\to +\infty}\int_{\partial B_1}u_{x_0,r_k}^2 \,\mathrm{d}\sigma = +\infty.
  $$
  Thus, by contradiction, suppose that $W_{\gamma_p,2}(u,x_0,0^+) = -\infty$ and there exists $r_0,C>0$ such that
  \be\label{contr0804}
  \int_{\partial B_1}u_{x_0,r}^2\, \mathrm{d}\sigma \leq C, \quad\forall r < r_0:= 1-|x_0|.
  \ee
By \eqref{blow.up.weiss}, we deduce that
  \begin{align*}
  -\infty &= \lim_{r \to 0^+} W_{\gamma_p,2}(u_{x_0,r},0,1) \\
  & \ge -C\gamma_p - \limsup_{r \to 0^+} \left( \frac{2\lambda_+}{p} \int_{B_1} (u_{x_0,r}^+)^p\, \mathrm{d}x + \frac{2\lambda_- r^{(q-p)\gamma_p}}{q}\int_{B_1} (u_{x_0,r}^-)^q\, \mathrm{d}x \right).
  \end{align*}
By definition of $\mathcal{V}(u,x_0)$, for any $\beta<\mathcal{V}(u,x_0)$ there exists $C>0$ such that
\[
\frac{H(u,x_0,r)}{r^{n-1}} \le C r^{2\beta} \quad \forall r <r_0.
\]
As a consequence, by choosing $p \gamma_p/q <\beta<\mathcal{V}(u,x_0)$ (in such a way that $2-2\gamma_p+q\beta>0$; note that this is possible, since $\mathcal{V}(u,x_0) \ge \gamma_p>p \gamma_p/q $), we obtain
\[
\begin{split}
r^{(q-p)\gamma_p} \int_{B_1} (u_{x_0,r}^-)^q\,\mathrm{d}x &= \frac{1}{r^{n-2+2\gamma_p}} \int_{B_r(x_0)} (u^-)^q \, \mathrm{d}x  \\
& \le \frac{1}{r^{n-2+2\gamma_p}} \int_0^r \rho^{n-1} \left(\frac{1}{\rho^{n-1}} H(u,x_0,\rho)\right)^\frac{q}{2}\,\mathrm{d}\sigma\\
& \le \frac{C}{r^{n-2+2\gamma_p}} \int_0^r \rho^{n-1 + q \beta}\,\mathrm{d}\rho \le C r^{2-2\gamma_p+ q\beta} \to 0,
\end{split}
\]
which implies that necessarily
$$
\limsup_{r \to 0^+}
  \int_{B_1} (u_{x_0,r}^+)^p\,\mathrm{d}x = +\infty.
$$
Thus, by collecting the $L^p$-norm of $u^+_{x_0,r}$ in the definition of $W_{\gamma_p,2}(u,x_0,r)$, we also obtain
\begin{align*}
-\infty &=\lim_{r \to 0^+} W_{\gamma_p,2}(u,x_0,r) \\
&=\lim_{r \to 0^+}   \int_{B_1} (u_{x_0,r}^+)^p\,\mathrm{d}x \left(
  \frac{\int_{B_1}{\abs{\nabla u_{x_0,r}}^2\,\mathrm{d}x}}{  \int_{B_1} (u_{x_0,r}^+)^p\,\mathrm{d}x}-\frac2p\lambda_+  \right. \\
  & \left. \hphantom{=\lim_{r \to 0^+}   \int_{B_1} (u_{x_0,r}^+)^p\,\mathrm{d}x \Big( \quad} - \frac2q \lambda_- r^{(q-p)\gamma_p}  \frac{\int_{B_1} (u_{x_0,r}^-)^q\,\mathrm{d}x}{ \int_{B_1} (u_{x_0,r}^+)^p\,\mathrm{d}x}   -\gamma_p\frac{\int_{\partial B_1}{u_{x_0,r}^2 \,\mathrm{d}\sigma}}{  \int_{B_1} (u_{x_0,r}^+)^p\, \mathrm{d}x} \right)\\
&=\lim_{r \to 0^+}   \int_{B_1} (u_{x_0,r}^+)^p\,\mathrm{d}x \left(
  \frac{\int_{B_1}{\abs{\nabla u_{x_0,r}}^2\,\mathrm{d}x}}{  \int_{B_1} (u_{x_0,r}^+)^p\,\mathrm{d}x}-\frac2p\lambda_+ + o(1) \right),
\end{align*}
which implies that
\be\label{equation.altro}
\limsup_{r \to 0^+}\frac{\int_{B_1}{\abs{\nabla u_{x_0,r}}^2\,\mathrm{d}x}}{  \int_{B_1} (u_{x_0,r}^+)^p\,\mathrm{d}x} \in \left[0,\frac2p\lambda_+\right].
\ee
Therefore, if we consider the normalized sequence
$$
v_{r}(x)=\frac{u_{x_0,r}(x)}{\norm{u^+_{x_0,r}}{L^p(B_1)}},
$$
such that $\norm{v^+_r}{L^p(B_1)}=1$, by the Poincar\'e inequality \eqref{Poin} we have that
\begin{align*}
\lim_{r \to 0^+}\norm{v_r}{H^1(B_1)}^2
& \le C\lim_{r \to 0^+} \frac{1}{\norm{u^+_{x_0,r}}{L^p(B_1)}^{2}}\left(\int_{B_1}|\nabla u_{x_0,r}|^2 \, \mathrm{d}x + \int_{\partial B_1}u^2_{x_0,r}\, \mathrm{d}\sigma\right)\\
&= \lim_{r \to 0^+} \frac{1}{\norm{u^+_{x_0,r}}{L^p(B_1)}^{2-p}}\left(
\frac{\int_{B_1}{\abs{\nabla u_{x_0,r}}^2\,\mathrm{d}x}}{\int_{B_1} (u_{x_0,r}^+)^p\,\mathrm{d}x}  + \frac{\int_{\partial B_1}u^2_{x_0,r}\,\mathrm{d}\sigma}{\int_{B_1}(u_{x_0,r}^+)^p\,\mathrm{d}x} \right)=0
\end{align*}
where we used that $p<2$, and the uniform bounds in \eqref{contr0804} and \eqref{equation.altro}. Thus, on one hand $v_r \to 0$ strongly in $H^1(B_1)$. On the other hand, by definition of $v_r$ ,we have
$$
1= \norm{v_r^+}{L^p(B_1)} \leq |B_1|^{\frac{1}{p\gamma_p}}\norm{v_r}{L^2(B_1)},
$$
which gives a contradiction for $r$ sufficiently small.
\end{proof}

Now we analyze the degenerate case.

\begin{lemma}\label{lem: deg}
The following equivalence holds: there exists $r_k \to 0^+$ such that $u_{x_0,r_k} \to 0$ uniformly in $B_1$ if and only if $u$ is $\gamma_p$-degenerate at $x_0$, that is
\[
\liminf_{r \to 0^+} \frac{1}{r^{\gamma_p}} \sup_{B_r(x_0)} |u| = 0.
\]
\end{lemma}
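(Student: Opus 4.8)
The plan is to reduce the statement to the elementary scaling identity for the sup-norm of a blow-up. First I would observe that, by the very definition \eqref{blow.up} of $u_{x_0,r}$, and since the affine map $x\mapsto x_0+rx$ is a bijection of $B_1$ onto $B_r(x_0)$, one has
\[
\sup_{B_1}|u_{x_0,r}| \;=\; \frac{1}{r^{\gamma_p}}\sup_{B_r(x_0)}|u| \qquad \text{for every } r\in(0,1-|x_0|).
\]
Writing $M(r):=\sup_{B_1}|u_{x_0,r}|\ge 0$, this is precisely the quantity entering the $\gamma_p$-degeneracy condition in Definition \ref{def.nondeg}, so that, after this identification, $u$ is $\gamma_p$-degenerate at $x_0$ if and only if $\liminf_{r\to 0^+}M(r)=0$.

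Given the identity above, both implications follow immediately. For the \emph{only if} part: if $u_{x_0,r_k}\to 0$ uniformly in $B_1$ along some $r_k\to 0^+$, then $M(r_k)=\|u_{x_0,r_k}\|_{L^\infty(B_1)}\to 0$, hence $\liminf_{r\to 0^+}M(r)=0$ because $M\ge 0$, i.e.\ $u$ is $\gamma_p$-degenerate at $x_0$. For the \emph{if} part: if $\liminf_{r\to 0^+}M(r)=0$, I would simply select a sequence $r_k\to 0^+$ realizing this lower limit, so that $\|u_{x_0,r_k}\|_{L^\infty(B_1)}=M(r_k)\to 0$, which is exactly the statement that $u_{x_0,r_k}\to 0$ uniformly in $B_1$.

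There is no real obstacle in the argument; the only point deserving a word of care is the interpretation of ``uniformly in $B_1$''. If one prefers to work on the closed ball, the sup-norms of $u_{x_0,r}$ on $B_1$ and on $\overline{B_1}$ are interchangeable for the purpose of taking $\liminf_{r\to 0^+}$, using the continuity of $u$ together with the elementary bound $\sup_{\overline{B_r}(x_0)}|u|\le \sup_{B_{2r}(x_0)}|u|$ and a harmless rescaling of the radius. The actual purpose of the lemma is to recast the analytic notion of $\gamma_p$-degeneracy of Definition \ref{def.nondeg} as a property of the blow-up family $\{u_{x_0,r}\}$, a form that will be directly exploited in the blow-up analysis for Theorem \ref{thm.mainblow}: along a blow-up sequence converging in $C^{1,\alpha}_{\loc}(\R^n)$, the limit is trivial precisely when $u$ is $\gamma_p$-degenerate at $x_0$.
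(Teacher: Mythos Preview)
Your argument is correct and coincides with the paper's own proof, which also rests entirely on the scaling identity $\sup_{B_1}|u_{x_0,r}| = r^{-\gamma_p}\sup_{B_r(x_0)}|u|$. The additional remarks you make about the closed ball and the role of the lemma are fine but not needed.
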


\begin{proof}
The proof follows straightforwardly from the fact that
\[
\sup_{B_{1}}|u_{x_0,r_k}| = \frac{1}{r_k^{\gamma_p}}\sup_{B_{r_k}(x_0)}|u|. \qedhere
\]
\end{proof}

Now we proceed with the:

\begin{proof}[Proof of Theorem \ref{thm.mainblow}-(i)]
By Lemma \ref{lem.1}, we already know that necessarily $W_{\gamma_p,2}(u,x_0,0^+) >-\infty$, since \eqref{hp blow-up reg} holds.

Let $\rho>0$ be fixed. In the first part of the proof, we show that $\{u_{x_0,r}\}$ is bounded in $H^1(B_\rho)$. To this end, we prove several estimates where $C>0$ denotes a positive constant whose value may change from line to line, and which may depend on $\rho$, $p$, $q$, $\mathcal{V}(u,x_0)$, but is independent of $r \in (0, r_0/\rho)$, with $r_0 = 1-|x_0|$. We will not stress such dependence, for the sake of brevity.

By the almost monotonicity of the Weiss-type functional $W_{\gamma_p,2}$ in Lemma \ref{lem: ex limit}, there exists $C>0$ such that
   \be\label{eq.sopra}
   W_{\gamma_p,2}(u_{x_0,r},0,\rho) = W_{\gamma_p,2}(u,x_0,\rho r) \leq W_{\gamma_p,2}(u,x_0,r_0) + C \le C
   \ee
   for every $r \in (0, r_0/\rho)$.
   Now, by H\"older and Poincar\'e inequalities
   \be\label{stima term p}
   \begin{split}
   \frac{1}{\rho^n}\int_{B_\rho}(u_{x_0,r}^+)^p\,\mathrm{d}x &\leq
   C \left(\frac{1}{\rho^n}\int_{B_\rho}u_{x_0,r}^2\, \mathrm{d}x\right)^{p/2}\\ &\leq C\left(\frac{1}{\rho^{n-2}}\int_{B_\rho}|\nabla u_{x_0,r}|^2\, \mathrm{d}x + \frac{1}{\rho^{n-1}} \int_{\pa B_\rho}  u_{x_0,r}^2\, \mathrm{d}\sigma\right)^{p/2}\\
   &\leq C\left(\frac{1}{\rho^{n-2}}\int_{B_\rho}|\nabla u_{x_0,r}|^2\,\mathrm{d}x\right)^{p/2}+C,
   \end{split}
   \ee
 where we also used \eqref{hp blow-up reg}. Analogously,
\[
   \frac{1}{\rho^n}\int_{B_\rho}(u_{x_0,r}^-)^q\,\mathrm{d}x \le C\left(\frac{1}{\rho^{n-2}}\int_{B_\rho}|\nabla u_{x_0,r}|^2\,\mathrm{d}x\right)^{q/2}+C.
\]
Recalling the expression of $W_{\gamma_p,2}(u_{x_0,r},0,\rho)$ from \eqref{blow.up.weiss}, and using the above estimates and \eqref{hp blow-up reg} into \eqref{eq.sopra}, we infer that
\[
\int_{B_\rho}|\nabla u_{x_0,r}|^2\mathrm{d}x \le C+ \left(\int_{B_\rho}|\nabla u_{x_0,r}|^2\,\mathrm{d}x\right)^{p/2} + C r^{(q-p)\gamma_p} \left(\int_{B_\rho}|\nabla u_{x_0,r}|^2\,\mathrm{d}x\right)^{q/2},
\]
for every $r>0$ small. Since $1\leq p,q<2$, this estimate, together with \eqref{hp blow-up reg} gives the boundedness of $\{u_{x_0,r}\}$ in $H^1(B_\rho)$ for small $r$.

This argument can be applied to every $\rho>0$; thus, by a diagonal selection, there exists a subsequence $\{u_{x_0,r_k}\}$ converging weakly in $H^1_\loc(\R^n)$ to a limit $\bar u \in H^1_\loc(\R^n)$; by taking the limit in the equation of $u_{x_0,r}$, namely \eqref{blow.up.equation}, we deduce that $\bar u$ solves \eqref{equation1} on the whole space (notice in particular that the coefficient of the negative phase vanishes in the limit):
\[
-\Delta \bar u= \lambda_+ (\bar u^+)^{p-1} \quad \text{in }\R^n.
\]
By elliptic regularity, this gives $C^{1,\alpha}_{\loc}(\R^n)$ convergence for every $\alpha \in (0,1)$ (actually, if $p>1$ then we also have $C^{2,\alpha}_{\loc}$ convergence, for some $\alpha \in (0,1)$).

Now we show that $\overline{u}$ is $\gamma_p$-homogeneous. Recalling the existence of the limit $W_{\gamma_p,2}(u,x_0,0^+)$ (see Lemma \ref{lem: ex limit}), equality \eqref{blow.up.weiss}, and using the $C^{1,\alpha}_{\loc}$-convergence, we deduce that
\begin{align*}
W_{\gamma_p,2}(u,x_0,0^+) &=\lim_{k \to \infty}W_{\gamma_p,2}(u,x_0,\rho r_k)\\
&=
\lim_{k \to \infty}W_{\gamma_p,2}(u_{x_0,r_k},0,\rho ) = W_{\gamma_p,2}(\overline{u},0,\rho).
\end{align*}
Since this is valid for every $\rho>0$, Proposition \ref{weiss.mon} implies that $\overline{u}$ is $\gamma_p$-homogeneous in $\R^n$, as desired.

Finally, we observe that if $W_{\gamma_p,2}(u,x_0,0^+)<0$, then also $W_{\gamma_p,2}(\overline{u},0,\rho)<0$, and hence $\bar u \not \equiv 0$. In particular, since the limit is the same for every sequence $r_k \to 0^+$, the fact that $W_{\gamma_p,2}(u,x_0,0^+)<0$ implies that $\bar u \not \equiv 0$ for any blow-up limit.

On the contrary, if $W_{\gamma_p,2}(u,x_0,0^+)\geq 0$, equation \eqref{equation1} and the homogeneity of $\bar u$ give
$$
0\leq W_{\gamma_p,2}(u,x_0,0^+)
=
W_{\gamma_p,2}(\overline{u},0,1)
%= \left(1-\frac2p\right)\int_{B_1}(\overline{u}^+)^p\mathrm{d}x + \int_{\partial B_1}\overline{u}(\partial_r \overline{u} - \gamma_p \overline{u}) \mathrm{d}\sigma\\
=\left(1-\frac2p \right)\lambda_+\int_{B_1}(\overline{u}^+)^p\,\mathrm{d}x \le 0,
$$
whence $\overline{u}^+\equiv 0$ in $B_1$, and hence also in $\R^n$ by homogeneity. Therefore, $\overline{u}$ is a non-positive harmonic function in $\R^n$, vanishing at $0$, and by the maximum principle it follows that $\overline{u}\equiv 0$, whence $W_{\gamma_p,2}(x_0,u,0^+)=0$. Again, this shows that $W_{\gamma_p,2}(x_0,u,0^+) \ge 0$ implies that any blow-up limit must vanish. Moreover, as shown in Lemma \ref{lem: deg}, this fact is equivalent to the $\gamma_p$-degeneracy of $u$ at $x_0$.
\end{proof}

\subsection{Proof of Theorem \ref{thm.mainblow}-(ii)}
Now we focus on the second alternative in Theorem \ref{thm.mainblow}. In this case it is convenient to introduce the normalized blow-up family centered at $x_0$ as
    \be\label{blow.up.normalized}
    \widetilde u_{x_0,r}(x)= \frac{1}{h_r}u(x_0 + r x), \quad \text{where} \quad h_r:= \sqrt{\frac{1}{r^{n-1}} H(u,x_0,r)},
    \ee
once again defined in $B_{x_0,r}$. Note that $\|\widetilde u_{x_0,r}\|_{L^2(\partial B_1)} = 1$ for every $r$, and $\widetilde u_{x_0,r}$ satisfies
\be\label{blow.up.normalized.equation}
-\Delta \widetilde u_{x_0,r} = \lambda_+   \frac{r^2}{h_r^{2-p}} (\widetilde u_{x_0,r}^+)^{p-1} - \lambda_-  \frac{r^2}{h_r^{2-q}} (\widetilde u_{x_0,r}^-)^{q-1} \quad\mbox{in }B_{x_0,r} = \frac{B_1 - x_0}{r},
\ee
where
\[
\frac{r^2}{h_r^{2-p}} = \left(\frac{r^{n-1+2\gamma_p}}{H(u,x_0,r)}\right)^{\frac{1}{\gamma_p}}, \quad \frac{r^2}{h_r^{2-q}} = \left(\frac{r^{n-1+2\gamma_q}}{H(u,x_0,r)}\right)^{\frac{1}{\gamma_q}}.
\]
In this setting, recalling also Remark \ref{rem: su W scaled}, we have
\be\label{weiss tilde}
\begin{split}
W_{\gamma_p,2}(\widetilde u_{x_0,r},0,\rho ) =&\, \frac{1}{\rho^{n-2+2\gamma_p}} \int_{B_\rho} \left(|\nabla \widetilde u_{x_0,r}|^2 - 2 F_{  \frac{r^2}{h_r^{2-p}} \lambda_+,  \frac{r^2}{h_r^{2-q}} \lambda_-}(\widetilde u_{x_0,r})\right)\,\mathrm{d}x \\
& - \frac{\gamma_p}{\rho^{n-1+2\gamma_p}} \int_{\pa B_\rho} \widetilde u_{x_0,r}^2\,\mathrm{d}\sigma \\
= & \frac{r^{n-1+2\gamma_p}}{ H(u,x_0,r)} W_{\gamma_p,2}(u,x_0,\rho r).
\end{split}
\ee

\begin{proof}[Proof of Theorem \ref{thm.mainblow}-(ii)]
By \eqref{case2}, there exists a sequence $r_k \to 0^+$ such that
\be\label{case2'}
\lim_{k \to \infty}\frac{1}{r_k^{n-1+2\gamma_p}}\int_{\partial B_{r_k}(x_0)}u^2\,\mathrm{d}\sigma = +\infty.
\ee
Note that the equality \eqref{weiss tilde} and the almost monotonicity in Lemma \ref{lem: ex limit} imply that
\be\label{weiss tilde 1}
\begin{split}
\int_{B_1} |\nabla \widetilde u_{x_0,r_k}|^2\,\mathrm{d}x & \le \frac{r_k^{n-1+2\gamma_p}}{ H(u,x_0,r_k)} C +\frac{2}{p} \left(\frac{r_k^{n-1+2\gamma_p}}{ H(u,x_0,r_k)}\right)^\frac1{\gamma_p} \lambda_+ \int_{B_1} (\widetilde u_{x_0,r_k}^+)^{p}\,\mathrm{d}x  \\ & \qquad  +\frac{2}{q} \left(\frac{r_k^{n-1+2\gamma_q}}{ H(u,x_0,r_k)}\right)^\frac1{\gamma_q} \lambda_- \int_{B_1} (\widetilde u_{x_0,r_k}^-)^{q}\,\mathrm{d}x + \gamma_p \int_{\pa B_1} \widetilde u_{x_0,r_k}^2\,\mathrm{d}\sigma.
\end{split}
\ee
By estimating the terms with the $L^p$ and $L^q$ norms of $\widetilde u_{x_0,r_k}^{\pm}$ exactly as in \eqref{stima term p}, and taking into account \eqref{case2'}, we deduce that
\[
\begin{split}
\int_{B_1} |\nabla \widetilde u_{x_0,r_k}|^2\,\mathrm{d}x  \le &\, o(1) + o(1) \left(\int_{B_1} |\nabla \tilde u_{x_0,r_k}|^2\,\mathrm{d}x\right)^\frac{p}{2} \\
&  \, +  o(1) \left(\int_{B_1} |\nabla \tilde u_{x_0,r_k}|^2\,\mathrm{d}x\right)^\frac{q}{2} + \gamma_p,
\end{split}
\]
which gives the boundedness of $\{\widetilde{u}_{x_0,r}\}$ in $H^1(B_1)$, since $1 \le p<q < 2$. Therefore, up to a further subsequence (still denoted by $\{r_k\}$) we have that $\widetilde u_{x_0,r_k} \rightharpoonup  \widetilde u$ weakly in $H^1(B_1)$ and strongly in $L^2(\partial B_1)$ by compactness of the trace operator, and moreover $\widetilde u \not \equiv 0$, since $\|\widetilde u\|_{L^2(\partial B_1)} =1$. By taking the limit in equation \eqref{blow.up.normalized.equation}, and using again \eqref{case2'}, we deduce that $\widetilde u$ is harmonic in $B_1$, and hence by elliptic regularity $\widetilde u_{x_0,r_k} \to  \widetilde u$ in $C^{1,\alpha}(B_1)$, for every $\alpha \in (0,1)$.

To prove the homogeneity of $\widetilde u$, we observe at first that Lemma \ref{lem: upper sc} ensures that $\mathcal{V}(\widetilde u,0) \ge \gamma_p$. Since $\widetilde u$ is harmonic, this implies that $D^\sigma \widetilde u(0) = 0$ whenever $|\sigma| \le \beta_p$ (recall that $\beta_p$ is the maximal positive integer strictly smaller than $\gamma_p$, see \eqref{betap}). Moreover, by taking the limit into \eqref{weiss tilde 1}, we deduce that
\[
\int_{B_1} |\nabla \widetilde u|^2\,\mathrm{d}x \le \gamma_p \int_{\pa B_1} \widetilde u^2\,\mathrm{d}\sigma,
\]
and hence the homogeneity of $\widetilde u$ follows from \cite[Lemma 4.2]{Wei2001}.
\end{proof}

\subsection{Classification of vanishing orders for local minimizer}

\begin{proof}[Proof of Theorem \ref{thm: van ord}]
Let $u$ be a local minimizer of \eqref{eq0} or \eqref{equation1} in $B_1$. If $x_0 \in \{u=0\}$, then either Theorem \ref{thm.mainblow}-(ii) holds, and in this case $\mathcal{V}(u,x_0) = \gamma_p$ by definition of $\mathcal{V}(u,x_0)$, or Theorem \ref{thm.mainblow}-(i) holds. If this latter alternative occurs, then up to a subsequence $u_{x_0,r} \to \bar u$, where $\bar u$ is a $\gamma_p$-homogeneous solution to \eqref{equation1}. Moreover, by Proposition \ref{thm: non-deg}, we deduce that $\bar u \not \equiv 0$ and this, by definition of $u_{x_0,r}$, implies that
\[
\frac{H(u,x_0,r)}{r^{n-1+2\gamma_p}} = H(u_{x_0,r},0,1) \to H(\bar u,0,1) \in (0,+\infty).
\]
By definition of vanishing order, this is again possible only if $\mathcal{V}(u,x_0) = \gamma_p$.
\end{proof}

\section{Homogeneous solutions in two dimensions and the Hausdorff \\dimension
 of the singular set}\label{section.two}

In this section we classify the global homogeneous solutions of \eqref{equation1} in the two-dimensional case (Theorem \ref{thm: homog}), and we derive the estimate for the Hausdorff dimension of the singular set for $\gamma_p$-non-degenerate solutions of \eqref{equation1} and \eqref{eq0} (Theorem \ref{thm: hausdorff}).

\medskip

We start with the proof of Theorem \ref{thm: homog}, which will follow from the combination of two results: at first, we prove existence of solutions with specific nodal properties (Proposition \ref{prop.existence}), and, afterwards, we show that there are no other solutions, modulo rotations (Proposition \ref{prop.uniqueness}).

It is plain that $u$ is a global homogeneous solution to \eqref{equation1} in $\R^2$ if and only if $u(r,\theta)= r^{\gamma_p}\varphi(\theta)$ and $\varphi$ is a solution to
\be\label{equation.h}
-\varphi'' -\gamma_p^2 \varphi = \lambda_+ (\varphi^+)^{p-1} \quad\text{on the unit circle $S^1$}
\ee
(namely a solution on $[0,2\pi]$, with periodic boundary conditions). We start by showing the existence of a solution to \eqref{equation.h} in a subinterval with a prescribed number of zeros.
\begin{lemma}\label{lem.Tk}
Let $k \in \N$ be such that $k \in (\gamma_p, 2\gamma_p)$. There exists a unique (non-trivial) solution to
\be\label{eq.homo.k}
\begin{cases}
  -\phi'' -\gamma_p^2 \phi = \lambda_+ (\phi^+)^{p-1}  & \mbox{in }(0,T_k) \\
  \phi(0)=0=\phi(T_k)
\end{cases}\quad\mbox{with}\quad T_k:=\frac{2\pi}{k},
\ee
which changes sign exactly once in $(0,T_k)$, being negative for $\theta$ close to $0$, and satisfies $\phi'(0^+)=-\phi'(T_k^-)$.
\end{lemma}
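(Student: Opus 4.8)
The plan is to construct $\phi$ by gluing together an explicit negative piece and a rescaled fixed positive profile, choosing the gluing point so that the $C^1$-matching condition is satisfied, and then to prove uniqueness by a shooting/monotonicity argument. First I would analyze the negative part: on an interval $(0,a)$ where $\phi<0$, equation \eqref{eq.homo.k} becomes the linear equation $-\phi'' - \gamma_p^2\phi = 0$, so $\phi(\theta) = -c\sin(\gamma_p\theta)$ with $c>0$; this stays negative precisely on $(0,\pi/\gamma_p)$, it vanishes at $\theta=\pi/\gamma_p$, and by homogeneity of the linear equation the shape is independent of $c$, only the amplitude scales. On the positive side, on an interval where $\phi>0$ the equation is the genuinely sublinear ODE $-\phi'' - \gamma_p^2\phi = \lambda_+\phi^{p-1}$; here I would use a phase-plane analysis. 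The key point is that the associated "energy" $E(\theta) = \tfrac12(\phi')^2 + \tfrac{\gamma_p^2}{2}\phi^2 + \tfrac{\lambda_+}{p}\phi^p$ is conserved, so positive bumps starting and ending at $\phi=0$ are parametrized by a single quantity (say the maximum height, or equivalently $E$); one shows that the "time map" $\theta \mapsto$ (length of the positive bump of height $h$) is a continuous function of $h$ which is \emph{strictly monotone} and ranges over an interval whose endpoints I need to identify.

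Second, I would pin down the range of the positive time map. As $h\to 0^+$ the sublinear term $\lambda_+\phi^{p-1}$ dominates the linear term $\gamma_p^2\phi$ (since $p<2$), so the bump length converges to the length of a positive bump of the purely sublinear equation $-\phi''=\lambda_+\phi^{p-1}$, which by scaling is a fixed positive number; a direct computation (or comparison with \cite{soavesublinear}) identifies this limiting length as $\pi/\gamma_p$ as well — in fact the sublinear positive bump and the linear negative bump have the \emph{same} half-period $\pi/\gamma_p$, which is exactly why the borderline exponent appears. As $h\to+\infty$, the linear term dominates and the bump length converges to the half-period of $-\phi''-\gamma_p^2\phi=0$ with the sublinear correction pushing it below $\pi/\gamma_p$: more precisely the length decreases monotonically from $\pi/\gamma_p$ towards some smaller limit. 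So the positive-bump length takes every value in an interval of the form $(\ell_\infty, \pi/\gamma_p)$. Now, to build a solution of \eqref{eq.homo.k} with exactly one negative bump followed by one positive bump on $(0,T_k)$ with $T_k=2\pi/k$, I need: length of negative bump $+$ length of positive bump $= 2\pi/k$, together with the $C^1$-matching $\phi'$ continuous at the interior zero and the symmetry condition $\phi'(0^+) = -\phi'(T_k^-)$. The negative bump has length $\pi/\gamma_p$ \emph{regardless of its amplitude} $c$; so the positive bump must have length $2\pi/k - \pi/\gamma_p$, which by the previous step lies in the admissible range $(\ell_\infty,\pi/\gamma_p)$ precisely when $k\in(\gamma_p,2\gamma_p)$ — the upper constraint $2\pi/k-\pi/\gamma_p < \pi/\gamma_p$ gives $k>\gamma_p$, and the lower constraint $2\pi/k-\pi/\gamma_p>\ell_\infty>0$ (using that $2\pi/k < 2\pi/\gamma_p$, i.e. $k>\gamma_p$, and that the needed length stays positive) is where $k<2\gamma_p$ enters. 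This fixes the height $h$ of the positive bump \emph{uniquely} by strict monotonicity of the time map, hence fixes $\phi'$ at the interior zero coming from the positive side; then the amplitude $c$ of the negative piece is determined uniquely by requiring $\phi'$ to be continuous there, and one checks that this same value automatically yields $\phi'(0^+)=-\phi'(T_k^-)$ by the reflection symmetry $\theta\mapsto T_k-\theta$ that each homogeneous bump enjoys about its own midpoint (the negative bump is symmetric about $\pi/(2\gamma_p)$, the positive bump about its own midpoint, and the conserved energy forces $|\phi'|$ to take the same value at both endpoints of each bump).

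Third, uniqueness: any solution of \eqref{eq.homo.k} that changes sign exactly once, negative near $0$, must consist of exactly one negative sub-bump on $(0,a)$ and one positive sub-bump on $(a,T_k)$; on the negative part it solves the linear equation and hence $a=\pi/\gamma_p$ is forced (the first zero of $\sin(\gamma_p\cdot)$), and then the positive part is a positive bump of length $T_k - \pi/\gamma_p = 2\pi/k-\pi/\gamma_p$, whose height is uniquely determined by the injectivity of the time map; finally $C^1$-matching at $a$ fixes the amplitude of the negative part, so the whole solution is unique. I expect the \textbf{main obstacle} to be the rigorous monotonicity and the identification of the endpoints of the positive time map, i.e. proving that $h\mapsto$(positive bump length) is strictly decreasing on $(0,\infty)$ with limits $\pi/\gamma_p$ as $h\to0^+$ and some $\ell_\infty<\pi/\gamma_p$ as $h\to\infty$; this is where the precise interplay between the exponent $p$ and the eigenvalue $\gamma_p^2$ must be handled carefully (for instance via differentiating the time-map integral $\int_0^h \big(2E(h) - \gamma_p^2 s^2 - \tfrac{2\lambda_+}{p}s^p\big)^{-1/2}\,\mathrm{d}s$ in $h$ and controlling the sign), and it is also the place where one sees why the count is exactly "the number of integers in $(\gamma_p,2\gamma_p)$" rather than something larger. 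The periodic gluing into a genuine solution of \eqref{equation.h} on $S^1$ with $2k$ zeros — reflecting and translating the basic $(0,T_k)$-block $k$ times — is then routine given the matching condition $\phi'(0^+)=-\phi'(T_k^-)$, and will be carried out in Proposition \ref{prop.existence}.
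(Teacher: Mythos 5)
Your overall architecture --- explicit sine on the negative phase, time-map analysis of the positive phase, $C^1$-gluing at the interior zero, and Hamiltonian conservation for $\phi'(0^+)=-\phi'(T_k^-)$ --- is close to the paper's. The paper establishes existence of the positive bump by a direct variational argument (minimizing $J_{(\theta_p,T_k)}$ on $H^1_0(\theta_p,T_k)$ and using the Poincar\'e inequality to get coercivity exactly when $k>\gamma_p$), then uses the time-map formula only for uniqueness; you propose to use the time map for both. That is a legitimate alternative strategy, but you have the time map exactly backwards, and this is a genuine error, not a cosmetic one.

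Concretely, the time map
\[
T(M)=2\int_0^1\frac{\de t}{\sqrt{\gamma_p^2(1-t^2)+\tfrac{2\lambda_+}{p\,M^{2-p}}(1-t^p)}}
\]
is \emph{strictly increasing} in the bump height $M$, with $T(M)\to 0$ as $M\to 0^+$ and $T(M)\to \pi/\gamma_p$ as $M\to+\infty$. You claim the opposite: that the bump length tends to $\pi/\gamma_p$ as $M\to 0^+$, tends to some positive $\ell_\infty<\pi/\gamma_p$ as $M\to+\infty$, and decreases. The root of the mistake is the assertion that the purely sublinear equation $-\phi''=\lambda_+\phi^{p-1}$ has a positive bump of \emph{fixed} length ``by scaling''. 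The invariant scaling of that equation, $\phi_\mu(\theta)=\mu^{-2/(2-p)}\phi(\mu\theta)$, rescales the bump length by $1/\mu$ while rescaling the height by $\mu^{-2/(2-p)}$; the bump length is \emph{not} scale invariant and in fact scales like $M^{(2-p)/2}\to 0$ as the height $M\to 0$. (Take $p=1$: $-\phi''=\lambda_+$ gives bumps of length $2v/\lambda_+$ depending on $v=\phi'(0)$.) This error propagates: the admissible set of positive-bump lengths is the interval $(0,\pi/\gamma_p)$, not $(\ell_\infty,\pi/\gamma_p)$ with $\ell_\infty>0$. Requiring $T_k-\theta_p\in(0,\pi/\gamma_p)$ yields precisely $k\in(\gamma_p,2\gamma_p)$; requiring $T_k-\theta_p>\ell_\infty>0$ would yield the stronger, incorrect constraint $k<2\pi/(\pi/\gamma_p+\ell_\infty)<2\gamma_p$ and would wrongly exclude admissible values of $k$. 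You flagged the precise identification of the time-map endpoints as your ``main obstacle''; indeed that is exactly where the argument breaks. The uniqueness part of your sketch (injectivity of the time map determines the height, then the amplitude of the negative sine, then the reflection/Hamiltonian symmetry) is sound in outline once the monotonicity direction is corrected, and matches the paper's Step 4.
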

\begin{proof}
  Since $\phi^+$ and $\phi^-$ solve different ODEs in their supports, we need to distinguish between these two cases, and classify the possible openings of the sets $\{\phi^+>0\}$ and $\{\phi^->0\}$. For the sake of simplicity we divide the proof in three steps.

  {\it Step 1 - The negative part. }Let $\bar{\theta}\in (0,2\pi)$ and suppose that %$\phi<0$ on the interval $(0,\bar{\theta})$ and it satisfies $\phi(\bar{\theta})=0$. Thus, since it solves the Dirichlet-eigenvalue problem
  \be\label{equation.negative.h}
  \begin{cases}
  -\phi'' =\gamma_p^2 \phi & \mbox{in }(0,\bar{\theta}) \\
  \phi <0 & \mbox{in }(0,\bar\theta)\\
  \phi(0)=0=\phi(\bar{\theta}).
\end{cases}
\ee
Then, necessarily,
$$
\bar \theta = \theta_p := \frac{\pi}{\gamma_p}\quad\mbox{and}\quad \phi(\theta)=-A\sin\left(\gamma_p\theta\right)
$$
for some $A>0$. Therefore, if $\phi$ changes sign in $(0,T_k)$ then
$$
\theta_p <T_k \quad \iff \quad k< 2\gamma_p. %\vspace{0.3cm}
$$

{\it Step 2 - The positive part. } In the remaining part of $(0,T_k)$ we construct a positive solution to
\be\label{equation.positive.h}
\begin{cases}
  -\phi'' -\gamma_p^2 \phi = \lambda_+ \phi^{p-1}  & \mbox{in }(\theta_p,T_k) \\
  \phi >0 & \mbox{in }(\theta_p,T_k)\\
  \phi(\theta_p)=0=\phi(T_k)
\end{cases}
\ee
by addressing the associated minimization problem. Let
$$
J_{(\theta_p,T_k)}(\phi)=\int_{\theta_p}^{T_k}\left(\frac12 (\phi')^2-\frac{\gamma_p^2}{2}\phi^2-\frac{\lambda_+}{p} (\phi^+)^p\right)\, \mathrm{d}\theta
$$
be the associated functional defined in $H^1_0([\theta_p,T_k])$. By Sobolev embeddings, it is clear that $J$ is weakly lower semi-continuous and, by applying the Poincar\'{e} inequality on $(\theta_p, T_k)$, we obtain that
$$
J_{(\theta_p,T_k)}(\phi) \geq \frac12\left(1-\frac{\gamma_p^2}{\Lambda_1}\right)\int_{\theta_p}^{T_k}(\phi')^2
-\frac{\lambda_+}{p}\frac{(T_k-\theta_p)^{\frac{2-p}{2}}}{\Lambda_1^{p/2}}\left(\int_{\theta_p}^{T_k}(\phi')^2\right)^{\frac{p}{2}}
$$
where $\Lambda_1$ denotes the first eigenvalue of the Dirichlet-Laplacian on $(\theta_p,T_k)$, namely
$$
\Lambda_1=\Lambda_1(\theta_p,T_k)=\frac{\pi^2}{(T_k-\theta_p)^2}.
$$
Since by a direct computation (using $T_k= 2\pi/k$ and $\theta_p=\pi/\gamma_p$) we have
$$
\gamma_p^2 < \Lambda_1 \quad \iff \quad  \frac{1}{\gamma_p}>\frac{2}{k}-\frac{1}{\gamma_p} \quad \iff \quad k>\gamma_p,
$$
and the latter condition is in force, we deduce that $J$ is bounded
from below and coercive in $H^1_0([\theta_p,T_k])$. Thus, by the direct method of the calculus of variations there exists a minimizer $\phi$ which solves the first equation of \eqref{equation.positive.h} with the boundary conditions. Since $J_{(\theta_p,T_k)}(|\phi|) \le J_{(\theta_p,T_k)}(\phi)$, with strict inequality if $\phi<0$ on a subinterval, we have that any minimizer is non-negative; thus, the strict positivity follows by the maximum principle.

{\it Step 3 - Construction of a sign-changing solution. }Let $\phi \in H^1_0([0,T_k])$ be such that
$$
\phi(\theta)=
\begin{cases}
  -A\sin(\gamma_p\theta) & \mbox{if } [0,\theta_p] \\
  \phi_2(\theta) & \mbox{if } [\theta_p,T_k]
\end{cases},
$$
where $A>0$ and $\phi_2$ is the solution to \eqref{equation.positive.h} of Step 2. In order to show that $\phi$ satisfies \eqref{eq.homo.k}, by the previous steps we only need to check that it is of class $C^1$ in a neighborhood of $\theta=\theta_p$. This is true for the choice $A=\phi'_2(\theta_p^+)/\gamma_p>0$. It remains to show that $\phi'(0^+) = \phi'(T_k^-)$. To this end, we note that the Hamiltonian function
\be\label{hamiltonian}
H(\phi,\phi')=\frac12 (\phi')^2 + \frac{\gamma_p^2}{2}\phi^2 + \frac{\lambda_+}{p}(\phi^+)^p
\ee
is constant along solutions to the first equation of \eqref{eq.homo.k}. Since $\phi(\theta) \to 0$ as both $\theta \to 0^+$ and $\theta \to T_k^-$, we deduce that $|\phi'(0^+)|=|\phi'(T_k^-)|$. The sign of $\phi$ finally implies that $\phi'(0^+)=-\phi'(T_k^-)$.

{\it Step 4 - Uniqueness of the solution. } Suppose by contradiction that there are two solutions $\phi_1, \phi_2$ with the desired properties. Note that the Hamiltonian function \eqref{hamiltonian} is constant along solutions to \eqref{eq.homo.k}:
\be\label{conservation}
\frac12 (\varphi'(\theta))^2 + \frac{\gamma_p^2}{2}\varphi^2(\theta) + \frac{\lambda_+}{p}(\varphi^+(\theta))^p = h \qquad \forall \theta \in [0,T_k]
\ee
for some $h \geq 0$, and actually $h>0$ unless $\varphi \equiv 0$. Then $\phi_1$ and $\phi_2$ are negative on $(0,\theta_p)$, and solve
\be\label{uni'}
\begin{cases}
  -\varphi'' -\gamma_p^2 \varphi = \lambda_+(\varphi^+)^{p-1}  & \mbox{in }(\theta_p,T_k) \\
  \varphi >0 & \mbox{in }(\theta_p,T_k)\\
  \varphi(\theta_p)=0, \qquad \varphi'(\theta_p)=2h_i^{1/2},
\end{cases}
\ee
for some $h_i>0$. Although the sublinear term in the first equation of \eqref{uni'} is only H\"{o}lder continuous, by the shape of the Hamiltonian function \eqref{conservation} it is easy to check that \eqref{uni'} has exactly one solution $\varphi$ (any level curves of the Hamiltonian does not cross the origin in the phase plane, apart from the equilibrium point $(0,0)$ itself). This means that, being $\phi_1 \neq \phi_2$, necessarily $h_1 \neq h_2$, say $h_1<h_2$. On the other hand, the conservation of the Hamiltonian also gives
\be\label{time}
T_k-\theta_p = 2\int_0^{M_i}\frac{\mathrm{d}\varphi}{\sqrt{2h-\gamma_p^2 \varphi^2 -\frac{2\lambda_+}{p}\varphi^p}} = 2M_i\int_0^{1}\frac{\mathrm{d}t}{\sqrt{2h-\gamma_p^2 M_i^2 t^2 -\frac{2\lambda_+}{p}M_i^p t^p}},
\ee
where $M_i>0$ is the maximum value of $\phi_i$ in $(\theta_p,T_k)$, which satisfies
\be\label{M}
\frac{\gamma_p^2}{2}M_i^2 + \frac{\lambda_+}{p}M_i^p = h_i.
\ee
By combining \eqref{time} and \eqref{M}, we further deduce that
\be\label{equazione.sopra}
T_k-\theta_p = T(M_i):=2\int_0^{1}\frac{\mathrm{d}t}{\sqrt{\gamma_p^2 (1- t^2) +\frac{2\lambda_+}{p M_i^{2-p}} (1-t^p)}},
\ee
which implies that $T_k-\theta_p=T(M_i)$ is a continuous function, strictly increasing with respect to $M_i$. But, by the inverse function theorem, the maximum value $M_i$ can be in turn seen as a continuous functions $M_i=M_i(h_i)$, strictly increasing with respect to $h_i$. Thus, on one hand
\[
h_1<h_2 \quad \implies \quad M_1<M_2 \quad \implies \quad T(M_1)<T(M_2),
\]
but on the other hand $T(M_1) = T(M_2) = T_k-\theta_p$, since $\phi_1$ and $\phi_2$ both solve \eqref{equation.positive.h}. This contradiction shows that \eqref{equation.positive.h} has a unique solution and, in turn, the solution constructed at Step 3 is unique.
\end{proof}

\begin{remark}\label{rem: opening}
From the previous proof, we learned that any connected component of $\{u<0\}$, when $u$ is any non-trivial $\gamma_p$-homogeneous solution to \eqref{equation1}, is a cone of opening $\theta_p=\pi/\gamma_p$.

Note also that we could obtain a unique solution $\tilde \phi$ in $[0,T_k]$ with $\tilde \phi(\theta)>0$ for $\theta$ close to $0$. Thanks to the condition $\phi'(0) = -\phi'(T_k)$, and the fact that equation \eqref{equation.h} is invariant under translations, it is clear that $\tilde \phi(\theta) = \phi(\theta_p+\theta)$.
\end{remark}

By extending the previous solution by periodicity on the whole interval $[0,2\pi]$, we obtain the existence of a $\gamma_p$-homogeneous solution to \eqref{equation1} in $\R^2$.
\begin{proposition}\label{prop.existence}
  Let $k \in \N$ be such that $k \in (\gamma_p,2\gamma_p)$. Then, there exists a non-trivial solution to
  $$
-\varphi'' -\gamma_p^2 \varphi = \lambda_+(\varphi^+)^{p-1} \quad\text{in $S^1$}
$$
with $2k$ zeros in $S^1$, that is
$$
\varphi(i T_k)=0=\varphi(\theta_p + iT_k)
\quad\mbox{for }i=0,\dots,k.
$$
This solution is unique modulo rotations, in the class of solutions with minimal period $2\pi/k$. \end{proposition}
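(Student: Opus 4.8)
The plan is to build the solution by gluing periodically the ``building block'' produced by Lemma \ref{lem.Tk}, and then to obtain uniqueness modulo rotations by reducing, through the conservation of the Hamiltonian \eqref{hamiltonian}, to the single-block uniqueness already contained in that lemma.

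\emph{Existence.} I would take the solution $\phi$ of \eqref{eq.homo.k} given by Lemma \ref{lem.Tk} and let $\varphi\colon\R\to\R$ be its $T_k$-periodic extension, $\varphi(\theta+jT_k)=\phi(\theta)$ for $\theta\in[0,T_k]$ and $j\in\mathbb{Z}$. Since $kT_k=2\pi$, $\varphi$ descends to a function on $S^1$. The matching conditions at the endpoints $0$ and $T_k$ recorded in Lemma \ref{lem.Tk} (the values and the one-sided derivatives of $\phi$ fit together) give $\varphi\in C^1(S^1)$, and since the right-hand side $\lambda_+(\varphi^+)^{p-1}$ is then continuous, a one-line bootstrap upgrades this to $\varphi\in C^2(S^1)$, so that $\varphi$ is a classical solution of \eqref{equation.h}. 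By construction $\varphi$ vanishes exactly at the points $iT_k$ and $\theta_p+iT_k$, and is nonzero on each open sub-arc (inheriting this from $\phi$ on $(0,\theta_p)\cup(\theta_p,T_k)$), so it has precisely the stated $2k$ zeros, with minimal period exactly $T_k$ (a negative bump and a positive bump cannot be matched by a translation). Finally $u(r,\theta)=r^{\gamma_p}\varphi(\theta)$ is the desired $\gamma_p$-homogeneous solution of \eqref{equation1} in $\R^2$, via the correspondence recalled just before Lemma \ref{lem.Tk}.

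\emph{Uniqueness modulo rotations.} Let $\psi$ be any non-trivial solution of \eqref{equation.h} with minimal period $T_k=2\pi/k$; by ODE bootstrapping $\psi\in C^2$. First I would observe that $\psi$ must change sign: a non-negative solution is ruled out by integrating \eqref{equation.h} over $S^1$, while a non-positive one would solve $-\psi''=\gamma_p^2\psi$, and a non-trivial $2\pi$-periodic solution of this equation always changes sign. Next, the Hamiltonian \eqref{hamiltonian} is constant along $\psi$, equal to some $h>0$; hence $|\psi'|=\sqrt{2h}\neq0$ at every zero of $\psi$, so the zeros are isolated (finitely many, by periodicity) and $\psi$ genuinely changes sign at each of them, splitting $S^1$ into finitely many arcs of alternating sign. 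By Remark \ref{rem: opening} each negative arc has length $\theta_p$ and there $\psi$ is an explicit sine of amplitude $\sqrt{2h}/\gamma_p$, so all negative arcs are translates of one another. On a positive arc, $\psi$ solves the Cauchy problem for the ODE in \eqref{equation.positive.h} with $\psi=0$ and $\psi'=\sqrt{2h}$ at the left endpoint, whose uniqueness (despite the non-Lipschitz nonlinearity) is precisely the phase-plane argument of Step 4 in the proof of Lemma \ref{lem.Tk}; hence all positive arcs are translates of one another as well, of some common length $L$. Therefore $\psi$ is the periodic repetition of one negative-then-positive block, and since the negative arcs (all of length $\theta_p$) are spaced $\theta_p+L$ apart, $\theta_p+L$ is the minimal period, forcing $\theta_p+L=T_k$. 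Placing the origin at the left endpoint of a negative arc, $\psi|_{[0,T_k]}$ is then a solution of \eqref{eq.homo.k} that changes sign exactly once and is negative near $0$; the uniqueness in Lemma \ref{lem.Tk} gives $\psi|_{[0,T_k]}=\phi$, whence $\psi=\varphi$, i.e.\ $\psi$ is a rotation of $\varphi$.

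I expect the core of the argument to be this uniqueness part, and within it the use of \eqref{hamiltonian} as a \emph{global} invariant across the sign changes of $\psi$: it is this that makes all positive (and all negative) arcs congruent, and reduces the full periodic problem, once the minimal period is prescribed, to the one-block uniqueness of Lemma \ref{lem.Tk}. The remaining ingredients --- periodic gluing, the regularity bootstrap, counting the zeros, and excluding sign-definite solutions --- are routine.
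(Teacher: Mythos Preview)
Your proof is correct and follows essentially the same approach as the paper: existence by periodic extension of the block $\phi$ from Lemma~\ref{lem.Tk}, and uniqueness by reducing to the single-block uniqueness of that lemma. Your uniqueness argument is more detailed than the paper's (which simply says ``follows directly from the uniqueness in Lemma~\ref{lem.Tk}''), but the extra steps you supply---conservation of the Hamiltonian \eqref{hamiltonian} across sign changes to force all negative arcs to have length $\theta_p$ and all positive arcs to be congruent, then matching the block length to the prescribed minimal period $T_k$---are precisely the ideas the paper develops in the subsequent Proposition~\ref{prop.uniqueness}, so you have in effect anticipated part of that proof.
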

\begin{proof}
Let $k\in \N$, $T_k=2\pi/k$ and let $\phi \in H^1_0([0,T_k])$ be the function constructed in Lemma \ref{lem.Tk}, satisfying $\phi'(0^+)=\phi'(T_k^-)$. By setting
$$
\varphi(\theta)=
\begin{cases}
  \phi(\theta) & \mbox{in } [0,T_k] \\
  \phi(\theta-T_k) & \mbox{in } [T_k,2T_k]\\
  \dots\\
  \phi(\theta-(k-1)T_k) & \mbox{in } [2\pi - T_k,2\pi]
\end{cases}
$$
we obtain a solution on the whole interval $[0,2\pi]$, satisfying $2\pi$-periodic conditions at the ends. The uniqueness follows directly from the uniqueness in Lemma \ref{lem.Tk} (one could also use the function $\tilde \phi$ of Remark \ref{rem: opening} instead of $\phi$. But, since $\tilde \phi$ is a translation of $\phi$, in the end one would obtain the same solution).
\end{proof}

Now we show that any $\gamma_p$-homogeneous solutions to \eqref{equation1} in $\R^2$ must be one of those constructed in Proposition \ref{prop.existence}.

\begin{proposition}\label{prop.uniqueness}
Let $u$ be a non-trivial $\gamma_p$-homogeneous solution to \eqref{equation1}. Then $u(r,\theta) = r^{\gamma_p} \varphi(\theta)$, where $\varphi$ is the $(2\pi/k)$-periodic extension of the solution of Lemma \ref{lem.Tk} for some $k \in (\gamma_p,2\gamma_p)$.

\end{proposition}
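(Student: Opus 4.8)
The plan is to reduce everything to the one-dimensional ODE analysis already carried out in Lemma~\ref{lem.Tk} and Remark~\ref{rem: opening}. Writing $u(r,\theta)=r^{\gamma_p}\varphi(\theta)$ with $\varphi$ a $2\pi$-periodic solution of \eqref{equation.h}, the goal is to show that $\varphi$ looks, after a rotation, exactly like the periodic extension constructed in Proposition~\ref{prop.existence}. First I would observe that $\varphi$ cannot be everywhere nonnegative: if $\varphi\ge 0$ on $S^1$ then $u\ge 0$ is a $\gamma_p$-homogeneous \emph{superharmonic} function on $\R^2$ vanishing at the origin, which forces $\varphi\equiv 0$ by the strong maximum principle (alternatively, integrate \eqref{equation.h} over $S^1$: $\int \gamma_p^2\varphi = -\lambda_+\int(\varphi^+)^{p-1}<0$ unless $\varphi\equiv 0$, contradicting $\varphi\ge0$). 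Likewise $\varphi$ cannot be everywhere $\le 0$, since then it would solve $-\varphi''=\gamma_p^2\varphi$ on all of $S^1$, i.e.\ $\varphi$ would be a multiple of $\sin(\gamma_p\theta)$ or $\cos(\gamma_p\theta)$, which is $2\pi$-periodic only if $\gamma_p\in\N$ and in any case changes sign. Hence $\{\varphi<0\}$ and $\{\varphi>0\}$ are both nonempty open subsets of $S^1$.

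Next I would analyze the connected components of $\{\varphi<0\}$ and $\{\varphi>0\}$. On any component $(\alpha,\beta)$ of $\{\varphi<0\}$, $\varphi$ solves $-\varphi''=\gamma_p^2\varphi$ with $\varphi(\alpha)=\varphi(\beta)=0$ and $\varphi<0$ inside; exactly as in Step~1 of Lemma~\ref{lem.Tk} this forces $\beta-\alpha=\theta_p=\pi/\gamma_p$ and $\varphi(\theta)=-A\sin(\gamma_p(\theta-\alpha))$ there. In particular $\varphi'(\alpha^+)=-\varphi'(\beta^-)=-A\gamma_p<0$. Since $\varphi\in C^1(S^1)$ (by elliptic/ODE regularity, because $p\ge1$), at each such endpoint the one-sided derivatives of the positive and negative regimes must match. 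Now on a component $(\beta,\beta')$ of $\{\varphi>0\}$ adjacent to $\beta$, $\varphi$ solves the positive-part equation in \eqref{equation.positive.h} with $\varphi(\beta)=\varphi(\beta')=0$, $\varphi>0$ inside; conservation of the Hamiltonian $H(\varphi,\varphi')=\tfrac12(\varphi')^2+\tfrac{\gamma_p^2}{2}\varphi^2+\tfrac{\lambda_+}{p}(\varphi^+)^p$ along the flow gives $|\varphi'(\beta^+)|=|\varphi'(\beta'^-)|$, and—exactly as in the uniqueness argument (Step~4) via the time map \eqref{equazione.sopra}—the length $\beta'-\beta=T(M)$ is a strictly increasing function of the maximum $M$, hence determined by the value $|\varphi'(\beta^+)|$. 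Continuity of $\varphi'$ at $\beta$ identifies this with $A\gamma_p$, so \emph{all} positive components have the same length, call it $L$, and the same profile; and all negative components have length $\theta_p$. Consequently $\{\varphi<0\}$ and $\{\varphi>0\}$ have the same number $k$ of components, they alternate around $S^1$, and $2\pi=k(\theta_p+L)$, i.e.\ $L=T_k-\theta_p$ with $T_k=2\pi/k$; feeding $L>0$ and $T(M)=L$ back into the constraints $\gamma_p^2<\Lambda_1(\theta_p,T_k)$ (needed for a positive solution to exist, Step~2 of Lemma~\ref{lem.Tk}) and $\theta_p<T_k$ (needed for the negative cone to fit, Step~1) yields precisely $\gamma_p<k<2\gamma_p$.

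Finally I would glue this together: rotating so that a negative component starts at $\theta=0$, the restriction of $\varphi$ to $[0,T_k]$ solves \eqref{eq.homo.k} with a single sign change, negative near $0$; by the uniqueness part of Lemma~\ref{lem.Tk} it coincides with the $\phi$ constructed there, and by periodicity with common period $T_k$ (all blocks identical) $\varphi$ is exactly the $(2\pi/k)$-periodic extension of $\phi$, i.e.\ the solution of Proposition~\ref{prop.existence}. I expect the main obstacle to be the bookkeeping in the middle step: one must rule out, using $C^1$-regularity and the Hamiltonian/time-map rigidity, the a priori possibility that different positive (or negative) components have different "energies" $h_i$, and then deduce that consecutive blocks are forced to be congruent so that the period is genuinely $T_k$; the positive-part side is the delicate one because the sublinear term is only H\"older, but the explicit monotonicity of the time map $T(M)$ in \eqref{equazione.sopra} already established in Lemma~\ref{lem.Tk} does exactly what is needed. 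Everything else—the sign dichotomy, the negative-cone computation, and the final gluing—is a direct consequence of results proven above.
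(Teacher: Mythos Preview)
Your argument is correct and follows the same strategy as the paper: show that $\varphi$ changes sign, pin the length of every negative arc to $\theta_p$, use conservation of the Hamiltonian to force all arcs to carry the same energy $h$ (the paper states this globally, you obtain it by $C^1$-matching and propagation around the circle, which is equivalent), deduce $T_k$-periodicity with $T_k=2\pi/k$, and then invoke the uniqueness part of Lemma~\ref{lem.Tk}.

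The one point where you diverge is the derivation of $k>\gamma_p$. You appeal to the constraint $\gamma_p^2<\Lambda_1(\theta_p,T_k)$ ``needed for a positive solution to exist, Step~2 of Lemma~\ref{lem.Tk}'', but Step~2 only shows that this inequality is \emph{sufficient} for a minimizer to exist; it does not show it is \emph{necessary} for an arbitrary positive solution of \eqref{equation.positive.h}. The paper avoids this by reading the bound directly off the time map: since $T(M)$ is continuous and strictly increasing with $\lim_{M\to 0^+}T(M)=0$ and $\lim_{M\to+\infty}T(M)=\theta_p$, one has $T_k-\theta_p=T(M)\in(0,\theta_p)$, i.e.\ $T_k<2\theta_p$, i.e.\ $k>\gamma_p$, with no extra input. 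Your route can be repaired in one line by testing the equation in \eqref{equation.positive.h} against the first Dirichlet eigenfunction on $(\theta_p,T_k)$, which yields $(\gamma_p^2-\Lambda_1)\int\varphi\psi_1=-\lambda_+\int\varphi^{p-1}\psi_1<0$ and hence $\gamma_p^2<\Lambda_1$; but the paper's computation is the more self-contained choice here.
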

\begin{proof}
Since any $\gamma_p$-homogeneous solution vanishes in $0$, by the maximum principle it follows trivially that $\varphi$ must changes sign on $S^1$.

Now, from the conservation of the Hamiltonian function \eqref{hamiltonian} on the interval $[0,2\pi]$, we deduce that there exists $h>0$ such that
\be\label{der at 0}
\varphi'(\theta)=\pm 2h^{1/2} \quad \text{ for every zero point $\theta \in \{\varphi = 0\}$},
\ee

Consider three consecutive zero points $0\leq \theta_0<\theta_1<\theta_2\leq 2\pi$ and suppose that $\{\varphi<0\}=(\theta_0,\theta_1)$ and $\{\varphi>0\}=(\theta_1,\theta_2)$. As observed in Remark \ref{rem: opening}, it is necessary that
$\theta_1-\theta_0=\theta_p$ with $\theta_p = \pi/\gamma_p$. Therefore, by \eqref{der at 0} we have
$$
\varphi(\theta)=-\frac{2h^{1/2}}{\gamma_p}\sin(\gamma_p(\theta-\theta_0)) \quad\mbox{in }[\theta_0,\theta_1],
$$
and this expression also holds, up to a translation, on any other interval $(\theta_i, \theta_{i+1})$ where $\varphi<0$ and $\varphi(\theta_i) = 0 =\varphi(\theta_{i+1})$.

Similarly, on $(\theta_1,\theta_2)$ we have
\be\label{uni}
\begin{cases}
  -\varphi'' -\gamma_p^2 \varphi = \lambda_+(\varphi^+)^{p-1}  & \mbox{in }(\theta_1,\theta_2) \\
  \varphi >0 & \mbox{in }(\theta_1,\theta_2)\\
  \varphi(\theta_1)=0,  \qquad  \varphi'(\theta_1)=2h^{1/2},
\end{cases}
\ee
and this Cauchy problem has a unique solution (see Step 4 in Lemma \ref{lem.Tk}). But then, by \eqref{der at 0} again, the function $\varphi$ satisfies \eqref{uni} in any interval $(\theta_{j},\theta_{j+1})$ where $\varphi>0$ and $\varphi(\theta_j) = 0 = \varphi(\theta_{j+1})$.

This means that $\varphi$ is both $2\pi$ and $(\theta_p + \theta_2-\theta_1)$-periodic, with $\theta_p + \theta_2-\theta_1 \le 2\pi$, whence necessarily
\be\label{period}
\theta_p + \theta_2-\theta_1 = \frac{2\pi}{k}=:T_k,\quad\mbox{for some }k\in\N,
\ee
and $\varphi$ changes sign precisely once in $[0,T_k]$.

Now, if $k \in (\gamma_p,\gamma_{2p})$, then by Proposition \ref{prop.existence} we know that $\varphi$ must coincide with a $2\pi/k$-periodic extension of a solution found in Lemma \ref{lem.Tk}.

On the other hand, from the condition $T_k>\theta_p$, we immediately deduce that in order to have a $T_k$ periodic solution it is necessary that $k<2\gamma_p$.

In order to conclude the proof, we show that another necessary condition for the existence is $k>\gamma_p$. Let $\varphi$ be a $T_k$-periodic solution to \eqref{equation.h}, and suppose that $\varphi>0$ in $(\theta_1,\theta_2)$. Arguing as in Step 4 of Lemma \ref{lem.Tk}, we obtain that
\be\label{equazione.sopra}
\theta_2-\theta_1 = T(M):=2\int_0^{1}\frac{\mathrm{d}t}{\sqrt{\gamma_p^2 (1- t^2) +\frac{2\lambda_+}{p M^{2-p}} (1-t^p)}},
\ee
where $M$ is the maximum value of $\varphi$ in $(\theta_1,\theta_2)$, characterized by
\be\label{M'}
\frac{\gamma_p^2}{2}M^2 + \frac{\lambda_+}{p}M^p = h.
\ee
This implies that $\theta_1-\theta_2=T(M)$ is a continuous function, strictly increasing with respect to $M$.
Since $M \in (0,+\infty)$ and
\begin{align*}
\lim_{M\to 0^+ }T(M) &= \lim_{M\to 0^+ } 2 M^{\frac{2-p}{2}}\int_0^{1}\frac{\mathrm{d}t}{\sqrt{\frac{2\lambda_+}{p} (1-t^p)}}=0,\\
\lim_{M\to +\infty }T(M) &= \frac{2}{\gamma_p}\int_0^1\frac{\mathrm{d}t}{\sqrt{1-t^2}} = \frac{\pi}{\gamma_p} =\theta_p,
\end{align*}
we obtain that $T(M) \in (0,\theta_p)$, for every $M>0$. Therefore, by \eqref{period},
$$
T_k - \theta_p = \theta_2-\theta_1=T(M) \in (0,\theta_p) \quad \iff \quad T_k<2\theta_p \quad \iff \quad k>\gamma_p,
$$
which excludes the presence of $T_k$-periodic solution for $k\leq \gamma_p$.
\end{proof}

\begin{proof}[Proof of Theorem \ref{thm: homog}]
The proof follows directly by combining Propositions \ref{prop.existence} and \ref{prop.uniqueness}.
\end{proof}

Once that the classification of the $2$-dimensional solutions to \eqref{equation1} is settled, thanks to the blow-up Theorem \ref{thm.mainblow} we can easily establish the partial regularity of the singular set for $\gamma_p$-non-degenerate solutions.

\begin{proof}[Proof of Theorem \ref{thm: hausdorff}]
Thanks to Theorems \ref{thm.mainblow} and \ref{thm: homog}, and observing that the singular set of any $\gamma_p$-homogeneous solution has Hausdorff dimension at most $n-2$, the thesis follows in a rather standard way from the Federer dimension reduction principle. We refer the reader to the proofs of \cite[Proposition 6.3]{MW} and \cite[Corollary 6.1]{MW} for the details (alternatively, it is also possible to adapt the proof of \cite[Theorem 1.7]{soavesublinear}).
\end{proof}

\section{Construction of degenerate solutions in two dimensions}\label{section.twodeg}

In what follows we construct $\gamma_p$-degenerate two-dimensional solutions for both equations  \eqref{eq0} and \eqref{equation1}. We focus on the case $p>1$. If $p=1$ in \eqref{equation1}, then the same result was proved in \cite[Section 4]{AW}. The case $p=1$ in \eqref{eq0} could be treated adapting the argument we are going to explain, but requires a regularization as in \cite{AW}, and we preferred to not insist on it for the sake of brevity.
%, but some modification is needed.

Let $k \in \mathbb{N}$, $k \ge 2$, and let $\ell_0,\dots,\ell_{k-1}$ be lines through the origin, with $\ell_i$ forming an angle $i \pi/k$ with the positive $x_1$-axis. Let also $T_i: \R^2 \to \R^2$ be the reflection with respect to $\ell_i$, and let us introduce the space of symmetric functions
\[
\begin{split}
C^{m,\alpha}_{S_k}(\overline{B_1}) &:= \left\{u \in C^{m,\alpha}(\overline{B_1}): \ u(T_i x) = u(x) \quad \text{in $B_1$}, \ \forall i=0,\dots,k-1\right\} \\
H^1_{S_k}(\overline{B_1}) & := \left\{u \in H^{1}(\overline{B_1}): \ u(T_i x) = u(x) \quad \text{a.e. in $B_1$}, \ \forall i=0,\dots,k-1\right\}.
\end{split}
\]

\begin{lemma}\label{lem.anderson}
Let $\lambda_+>0$, $\lambda_- \ge 0$, and $1<p < q<2$. For each $g \in C^{2,1}_{S_k}(\overline{B_1})$ there exists $\kappa \in \R$ such that the problem
  $$
   \begin{cases}
   -\Delta u = \lambda_+(u^+)^{p-1}-\lambda_-(u^-)^{q-1} & \mbox{in }B_1\\
   u = g- \kappa &\mbox{on }\partial B_1
      \end{cases}
   $$
has a solution $u \in C^{2,\gamma}_{S_k}(\overline{B_1})$, for some $\gamma \in (0,1)$, such that $u(0) = 0$.
\end{lemma}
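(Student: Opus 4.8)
The plan is to find $u$ as a minimizer of the functional $J(\cdot,B_1)$ over the affine space of $S_k$-symmetric competitors with boundary datum $g-\kappa$, and then to choose the parameter $\kappa$ so that the value at the origin is exactly zero. More precisely, for each $\kappa \in \R$ let
\[
\mathcal{A}_\kappa := \left\{ v \in H^1_{S_k}(\overline{B_1}) : v - (g-\kappa) \in H^1_0(B_1) \right\},
\]
and let $u_\kappa$ be a minimizer of $J(\cdot,B_1)$ in $\mathcal{A}_\kappa$. Existence is obtained exactly as in the proof of Theorem \ref{thm: ex}: the coercivity estimate coming from the Poincar\'e inequality and $1\le p<q<2$ applies verbatim, and the symmetry constraint is preserved under weak $H^1$-convergence, so the direct method gives a minimizer $u_\kappa \in H^1_{S_k}(\overline{B_1})$. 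Standard regularity theory (recalled in the introduction) upgrades $u_\kappa$ to $C^{1,\beta}(\overline{B_1})$ for every $\beta$, and to $C^{2,\gamma}$ in the interior since $p>1$; and $u_\kappa$ solves the Euler--Lagrange equation, i.e. the PDE in the statement. The $S_k$-symmetry of $u_\kappa$ is inherited from that of the competitor class and of the datum $g$.

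The core of the argument is then the choice of $\kappa$. Consider the map
\[
\kappa \longmapsto \Phi(\kappa) := u_\kappa(0).
\]
The first issue is that $u_\kappa$ need not be unique, so $\Phi$ is a priori multivalued; I would circumvent this by working with the possibly set-valued map and invoking a topological/connectedness argument, or — cleaner — by establishing that $\Phi$ has a closed graph and that any selection is monotone in $\kappa$. Indeed, replacing $g$ by $g-\kappa$ shifts the whole problem: one expects that increasing $\kappa$ decreases $u_\kappa$ pointwise (this uses a comparison/truncation argument exploiting the monotonicity structure $t\mapsto \lambda_+(t^+)^{p-1}-\lambda_-(t^-)^{q-1}$, together with minimality — compare $u_{\kappa_1}$ with $\min\{u_{\kappa_1},u_{\kappa_2}+(\kappa_1-\kappa_2)\}$ and $\max\{u_{\kappa_2},u_{\kappa_1}-(\kappa_1-\kappa_2)\}$ as competitors). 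Monotonicity plus the closed-graph property yields a value $\kappa^*$ for which $0$ lies in the closure, hence is attained, by $\Phi(\kappa^*)$.

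It remains to verify the two sign conditions that force $\Phi$ to cross zero, i.e. that $u_\kappa(0)>0$ for $\kappa$ sufficiently negative and $u_\kappa(0)<0$ for $\kappa$ sufficiently positive. For $\kappa$ very negative, $g-\kappa$ is large and positive on $\partial B_1$; comparing $u_\kappa$ from below with the solution of $-\Delta w = \lambda_+(w^+)^{p-1}$, $w=g-\kappa$ on $\partial B_1$ (or simply with a large positive constant, using that $\lambda_+(t^+)^{p-1}\ge 0$ and the maximum principle applied to $u_\kappa -$ const), one gets $u_\kappa >0$ on $\overline{B_1}$, in particular $u_\kappa(0)>0$; symmetrically, for $\kappa$ very positive one gets $u_\kappa<0$ everywhere via comparison using the $-\lambda_-(u^-)^{q-1}$ term (when $\lambda_->0$) or directly since $u_\kappa$ is then superharmonic where positive and bounded above by the negative boundary value. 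The continuity/closedness of $\kappa\mapsto u_\kappa$ follows from the uniform $C^{1,\beta}$ bounds (Arzel\`a--Ascoli) together with stability of minimality under uniform convergence of the datum. The main obstacle I anticipate is precisely handling the possible non-uniqueness of the minimizer $u_\kappa$ and making the intermediate-value step rigorous; the comparison-principle inputs, though they require care because of the merely H\"older-continuous nonlinearity, are by now standard for this class of problems and can be deduced from the minimality as in the references \cite{AW, MW}.
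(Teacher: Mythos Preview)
Your approach is genuinely different from the paper's, and as written it has real gaps.

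The paper does not shoot in $\kappa$ at all. Instead it sets up a compact operator $T$ on $C^{0,\alpha}_{S_k}(\overline{B_1})$ by letting $T(v)$ solve the \emph{linear} problem $-\Delta w = \lambda_+((v-v(0))^+)^{p-1}-\lambda_-((v-v(0))^-)^{q-1}$ in $B_1$, $w=g$ on $\partial B_1$. A fixed point $\bar u$ then satisfies the nonlinear equation with $\bar u-\bar u(0)$ in place of $u$, so $u:=\bar u-\bar u(0)$ and $\kappa:=\bar u(0)$ give the result with $u(0)=0$ automatically. The a~priori bound for Leray--Schauder is a De~Giorgi--Moser $L^\infty$ estimate using $q-1<1$; the symmetric criticality step (Palais) is invoked to pass from a symmetric minimizer/solution to an actual solution of the linear auxiliary problem.

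In your proposal the two substantive steps are both problematic as stated. First, the monotonicity argument: you compare $u_{\kappa_1}$ with $\min\{u_{\kappa_1},\,u_{\kappa_2}+(\kappa_1-\kappa_2)\}$, but $u_{\kappa_2}+c$ is neither a solution (the nonlinearity is not translation-invariant) nor a competitor in $\mathcal{A}_{\kappa_1}$ (its trace is $g-2\kappa_2+\kappa_1$, not $g-\kappa_1$). The correct lattice argument --- comparing with $\max(u_{\kappa_1},u_{\kappa_2})$ and $\min(u_{\kappa_1},u_{\kappa_2})$ directly --- only shows that \emph{some} pair of minimizers is ordered, not that your chosen selection is monotone; extracting a monotone selection, and combining it with the closed-graph property to get a genuine intermediate-value conclusion, needs more work than you indicate. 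Second, the sign conditions: for sublinear $f$ the comparison principle fails, and your maximum-principle sketch does not yield a contradiction. For instance, if $g-\kappa<0$ and $\{u_\kappa>0\}\neq\emptyset$, then $u_\kappa$ is superharmonic there with zero boundary values, which gives only $u_\kappa\ge 0$ --- consistent, not contradictory. (A workable route is to rescale: $w_\kappa:=u_\kappa/\kappa$ solves an equation whose right-hand side is $O(|\kappa|^{p-2})+O(|\kappa|^{q-2})\to 0$, so $w_\kappa\to -1$ by elliptic estimates, forcing $u_\kappa(0)<0$ for large~$\kappa$; similarly for $\kappa\to-\infty$. But this is not the argument you wrote.)

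The paper's fixed-point formulation sidesteps all of this: no comparison, no monotonicity, no multi-valued intermediate-value argument, and the constraint $u(0)=0$ is built into the operator rather than achieved a~posteriori.
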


\begin{proof}
For every $f \in C^{0,\gamma}_{S_k}(\overline{B_1})$, it is classical that the problem
  \be\label{pb f}
   \begin{cases}
   -\Delta u = f & \mbox{in }B_1\\
   u = g &\mbox{on }\partial B_1
      \end{cases}
   \ee
has a solution $u \in C^{2,\gamma}_{S_k}(\overline{B_1})$; for instance, one can show the existence of a weak solution by minimizing the associated energy functional in the space of functions in $H^1_{S_k}(B_1)$ with prescribed trace. The existence of a minimizer can be shown as in the proof of Theorem \ref{thm: ex}. The fact that a minimizer in the space of symmetric function is a solution to \eqref{pb f} follows from the principle of symmetric criticality \cite{Pal}, since $f$ and $g$ are assumed to be symmetric as well. The regularity theory ensures that any weak solution is in fact a classical $C^{2,\gamma}$ solution, up to the boundary.

In particular, given any $v \in C^{0,\alpha}_{S_k}(\overline{B_1})$, we can consider $T(v)$ as the symmetric solution to \eqref{pb f} with
\[
f(x) = \lambda_+\big((v(x)-v(0))^+\big)^{p-1} - \lambda_-\big((v(x)-v(0))^-\big)^{q-1}\in C^{0,\gamma}_{S_k}(\overline{B_1}), \quad \text{for }\gamma= \alpha (p-1).
\]
Notice that $T(v) \in C^{2,\gamma}_{S_k}(\overline{B_1})$. However, by thinking at $T(v)$ as a function in $C^{0,\alpha}_{S_k}(\overline{B_1})$, we have an operator $T: C^{0,\alpha}_{S_k}(\overline{B_1}) \to C^{0,\alpha}_{S_k}(\overline{B_1})$ which is compact, by Schauder estimates (see e.g. \cite[Theorem 6.6]{GT}). At this point we aim at applying the Schauder fixed point theorem \cite[Theorem 11.3]{GT} and, to this end, we have to check that there exists a positive constant $M$ such that
\be\label{unif bounds}
\|u\|_{C^{0,\alpha}(\overline{B_1})} \le M \quad \text{for every $u \in C^{0,\alpha}_{S_k}(\overline{B_1})$ satisfying $u = \sigma T(u)$, with $\sigma \in [0,1]$}.
\ee
The equation $u=\sigma T(u)$ can be rewritten as
\be\label{pb fixed point}
\begin{cases}
-\Delta u = \sigma \lambda_+\big((u-u(0))^+\big)^{p-1} - \sigma \lambda_-\big((u-u(0))^-\big)^{q-1}& \text{in $B_1$} \\
u = \sigma g & \text{on $\pa B_1$.}
\end{cases}
\ee
Thus, the function $w:= u-\sigma g$ is in $H_0^1(B_1)$ and solves
\[
-\Delta w = \sigma \lambda_+\big((u-u(0))^+\big)^{p-1} - \sigma \lambda_-\big((u-u(0))^-\big)^{q-1} + \sigma \Delta g \quad \text{in $B_1$}.
\]
By the De Giorgi-Moser iteration for $-\Delta w = f \in L^t(B_1)$ with $w \in H_0^1(B_1)$, we deduce that for any $t>2$
\[
\begin{split}
\|u\|_{L^\infty(B_1)} &\le \|u-\sigma g\|_{L^\infty(B_1)} + \|g\|_{L^\infty(B_1)} \\
& \le  C \|\sigma \lambda_+\big((u-u(0))^+\big)^{p-1} - \sigma \lambda_-\big((u-u(0))^-\big)^{q-1} + \sigma \Delta g\|_{L^t(B_1)} + \|g\|_{L^\infty(B_1)} \\
%& \le C \| u-u(0) \|_{L^\infty(B_1)}^{p-1} + C \|g\|_{C^{2,1}(\overline{B_1})} \\
& \le C \left( \| u\|_{L^\infty(B_1)}^{q-1} + \|g\|_{C^{2,1}(\overline{B_1})}\right).
\end{split}
\]
In turn, since $q-1<1$, this implies that $\|u\|_{L^\infty(B_1)} \le C_0$, with $C_0$ independent of $\sigma$; coming back to \eqref{pb fixed point}, this gives the desired uniform bound \eqref{unif bounds}, via elliptic estimates.

At this point the existence of a fixed point $\bar u \in C^{0,\alpha}_{S_k}(\overline{B_1})$ for $T$ follows from the Schauder fixed point theorem \cite[Theorem 11.3]{GT}. The fixed point $\bar u$ is a solution to \eqref{pb fixed point} with $\sigma=1$ and hence, by regularity theory, it is of class $C^{2,\gamma}$ for some $\gamma>0$. By taking $u= \bar u-\bar u(0)$, and letting $\kappa = \bar u(0)$, the proof is complete.
\end{proof}

\begin{proposition}
  There exists a non-trivial solution $u \in H^1(B_1)$ to \eqref{eq0} or \eqref{equation1} such that $0 \in \{u=0\}$ and
  $$
  \liminf_{r \to 0^+} \frac{1}{r^{\gamma_p}}\sup_{B_r(x_0)}|u| =0,
  $$
  that is $u$ is $\gamma_p$-degenerate at $0$.
\end{proposition}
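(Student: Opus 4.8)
The plan is to follow the scheme of \cite[Corollary 4.4]{AW}: produce a solution carrying a dihedral symmetry of sufficiently high order, and then observe that such a symmetry is incompatible with \emph{every} admissible non-trivial blow-up at the origin, thanks to the classification in Theorem \ref{thm: homog}. Concretely, I would fix an integer $N>2\gamma_p$, pick a non-constant $g\in C^{2,1}_{S_N}(\overline{B_1})$ (for instance $g(x)=\mathrm{Re}\big((x_1+\mathrm{i}x_2)^N\big)$, which is $S_N$-symmetric and non-constant on $\partial B_1$), and apply Lemma \ref{lem.anderson} with $k=N$. This yields $\kappa\in\R$ and a solution $u\in C^{2,\gamma}_{S_N}(\overline{B_1})\subset H^1(B_1)$ of \eqref{eq0} (or of \eqref{equation1} when $\lambda_-=0$) with $u=g-\kappa$ on $\partial B_1$ and $u(0)=0$; since $g-\kappa$ is non-constant, hence not identically zero, on $\partial B_1$, the function $u$ is non-trivial, and $0\in\{u=0\}$.

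Next I would record the two elementary facts that make the choice $N>2\gamma_p$ decisive. First, the $S_N$-symmetry is inherited by the blow-up family $u_{0,r}$ and by the leading polynomial $P_0$ in the expansion of Proposition \ref{prop.underdog}: if $u=P_0+\Gamma_0$ with $P_0$ homogeneous of degree $m$ and $|\Gamma_0(x)|\le C|x|^{m+\delta}$, then for every $T_i\in S_N$ the quantity $P_0(T_ix)-P_0(x)$ is homogeneous of degree $m$ and $O(|x|^{m+\delta})$, hence vanishes. Second, no non-trivial $S_N$-symmetric object of the relevant homogeneity exists: a non-trivial homogeneous harmonic polynomial of degree $m$ in $\R^2$ equals $c\,r^m\cos(m\theta-\phi)$, whose rotational symmetry group is cyclic of order $m$, so it cannot be $S_N$-symmetric unless $N\le m$; and, by Theorem \ref{thm: homog}, every non-trivial $\gamma_p$-homogeneous solution of \eqref{equation1} has minimal angular period $2\pi/m$ with $m\in(\gamma_p,2\gamma_p)\cap\N$, so invariance under the rotation by $2\pi/N$ contained in $S_N$ would force $N\mid m$, which is impossible since $0<m<2\gamma_p<N$.

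With these two facts the argument closes quickly. By Proposition \ref{prop.underdog}, if $\mathcal{V}(u,0)\in\{1,\dots,\beta_p\}$ then $P_0$ would be a non-trivial $S_N$-symmetric homogeneous harmonic polynomial of degree $\le\beta_p<\gamma_p<N$ — excluded — so necessarily $\mathcal{V}(u,0)\ge\gamma_p$, and Theorem \ref{thm.mainblow} applies at $0$. Alternative (ii) of that theorem would produce a non-trivial $S_N$-symmetric $\gamma_p$-homogeneous harmonic polynomial, excluded again; hence alternative (i) holds, and along a subsequence $u_{0,r_k}\to\overline u$ in $C^{1,\alpha}_{\loc}(\R^n)$, with $\overline u$ an $S_N$-symmetric $\gamma_p$-homogeneous solution of \eqref{equation1}. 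By the exclusion just proved, $\overline u\equiv 0$. The final equivalence in Theorem \ref{thm.mainblow}-(i) then gives that $u$ is $\gamma_p$-degenerate at $0$, i.e. $\liminf_{r\to0^+}r^{-\gamma_p}\sup_{B_r(0)}|u|=0$, which is the assertion.

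The only genuinely delicate ingredient is the one already at our disposal, namely the classification of $\gamma_p$-homogeneous solutions of \eqref{equation1} in Theorem \ref{thm: homog}: it is precisely the fact that their number — and, crucially, their dihedral symmetry — is under control that lets one annihilate all admissible blow-ups by imposing enough symmetry on the boundary data. The remaining steps (symmetry propagation, the harmonic-polynomial bookkeeping, and the reduction of $\gamma_p$-degeneracy to the triviality of the blow-up via Theorem \ref{thm.mainblow}) are routine. One should only keep in mind that, exactly as in Lemma \ref{lem.anderson}, the construction uses $p>1$; the case $p=1$ for \eqref{equation1} is \cite[Section 4]{AW}, and for \eqref{eq0} it would require an additional regularization.
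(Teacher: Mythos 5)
Your proposal is correct and follows essentially the same path as the paper: the paper also applies Lemma~\ref{lem.anderson} to $g=r^k\cos(k\theta)$ with $k>2\gamma_p$ (which is exactly $\mathrm{Re}((x_1+ix_2)^k)$), propagates the $S_k$-symmetry to the blow-ups, and rules out each alternative of Proposition~\ref{prop.underdog} and Theorem~\ref{thm.mainblow}. The only cosmetic difference is in excluding a non-trivial blow-up in alternative~(i): the paper invokes Remark~\ref{rem: opening} (each negative component must be a cone of opening $\theta_p=\pi/\gamma_p$, too wide for the $S_k$-wedges), while you invoke the minimal angular period from Theorem~\ref{thm: homog} directly; both are immediate consequences of the same classification.
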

\begin{proof}
Under the notations of Lemma \ref{lem.anderson}, let $k \in \N$ be such that $k>2\gamma_p$, and consider
  $$
  g(r\cos\theta, r \sin \theta)= r^k\cos(k\theta).
  $$
 By Lemma \ref{lem.anderson}, there exist a non-trivial $u \in C^{2,\gamma}_{S_k}(\overline{B_1})$ and $\kappa \in \R$ such that
  $$
   \begin{cases}
   -\Delta u = \lambda_+(u^+)^{p-1} -\lambda_-(u^-)^{q-1} & \mbox{in }B_1\\
   u = g- \kappa &\mbox{on }\partial B_1,
   \end{cases} \quad \text{and} \quad u(0) = 0.
   $$
 The idea is to use now Theorem \ref{thm.mainblow} and Proposition \ref{prop.underdog} at $0$, to show that $u$ is $\gamma_p$-degenerate at $0$. We divide the proof in three cases.

   {\it Case 1.} If $\mathcal{V}(u,0)<\gamma_p$, by Proposition \ref{prop.underdog} there exists a non-trivial homogeneous harmonic polynomial $P_{0}$ of degree $d=\mathcal{V}(u,0)$ such that
        $$
        u(x) = P_{0}(x) + O(|x|^{\mathcal{V}(u,0)+\delta}) \quad \mbox{in }B_{1/2},
        $$
        for some $\delta>0$ sufficiently small. However, by construction, the polynomial must inherit the symmetries of $u$ (i.e. $P_0 \in C^{\infty}_{S_k}$), which implies that it is homogeneous of degree $d=k>2\gamma_p$, in contradiction with the assumption $d=\mathcal{V}(u,0)<\gamma_p$;

   {\it Case 2.} If $\mathcal{V}(u,0)\geq \gamma_p$ and
   $$
\limsup_{r \to 0^+}\frac{H(u,0,r)}{r^{n-1+2\gamma_p}} = +\infty
    $$
then, by Theorem \ref{thm.mainblow}-(ii), there exists a subsequence $r_k \searrow 0^+$ such that
$$
\widetilde u_{0,r_k}
\to \widetilde u\quad\mbox{in } C^{1,\alpha}_\loc(B_1),
$$
for every $\alpha \in (0,1)$, where $\widetilde u$ is a $\gamma_p$-homogeneous harmonic polynomial. Therefore, the contradiction follows the same path of {\it Case 1}.

{\it Case 3.} If $\mathcal{V}(u,0)\geq \gamma_p$ and
$$
  \limsup_{r \to 0^+}\frac{H(u,0,r)}{r^{2\gamma_p}} < +\infty,
$$
then, by Theorem \ref{thm.mainblow}-(i), there exists a sequence $r_k \searrow 0^+$ such that
$$
u_{0,r_k}
\to \overline{u} \quad\mbox{in } C^{1,\alpha}_\loc(\R^2),
$$
for every $\alpha \in (0,1)$, where $\overline{u}$ is a $\gamma_p$-homogeneous solution to \eqref{equation1} in $\R^2$, still symmetric with respect to any reflection $T_i$. If $\bar u \not \equiv 0$, by homogeneity and symmetry, any connected component of $\{\bar u<0\}$ must be a cone of opening smaller than $2\pi/k$. Moreover, $2\pi/k<\theta_p$, since $k>2\gamma_p$. As observed in Remark \ref{rem: opening}, this is however not possible. We conclude that necessarily $\bar u\equiv 0$ which, as stated in Theorem \ref{thm.mainblow}, is equivalent to the $\gamma_p$-degeneracy of $u$ at $0$.
\end{proof}

\begin{remark}
It is interesting that the previous proof fails for solutions to \eqref{eq.sublinear.ST} ($1 \le p=q<2$, with $\lambda_\pm>0$). In such case, there are infinitely many $\gamma_p$-homogeneous solutions, and the connected components of the positive and the negative parts can be arbitrarily small. We refer to \cite[Section 8]{soavesublinear}. In fact, it is also known that all solutions to \eqref{eq.sublinear.ST} are non-degenerate \cite[Theorem 1.4]{soavesublinear}. Thus, the degeneracy is strictly connected to the asymmetric behavior of the positive and the negative parts in the right hand side of \eqref{eq0} and \eqref{equation1}.
\end{remark}

%\bibliography{biblio}
%\bibliographystyle{abbrv}
\end{document}